\newcommand{\extp}{\@ifnextchar^\@extp{\@extp^{\,}}}
\def\@extp^#1{\mathop{\bigwedge\nolimits^{\!#1}}}
\newtheorem{prop}{Proposition}[section]
\newtheorem{Coro}{Corollary}[section]
\newtheorem{Lem}{Lemma}[section]
\newtheorem{Thm}{Theorem}[section]
\newtheorem{Def}{Definition}[section]
\newtheorem{Example}{Example}[section]
\newcommand\frontmatter{%
    \cleardoublepage
  \pagenumbering{roman}}
\newcommand\mainmatter{%
    \cleardoublepage
  \pagenumbering{arabic}}
\newcommand\backmatter{%
  \if@openright
    \cleardoublepage
  \else
    \clearpage
  \fi
   }
\date{}
\title{\normalsize Para-Holomorphic Algebroids and Para-Complex Connections} 
\author{by\\  \normalsize Aidan Patterson
\\ \\  \\   \normalsize A thesis\\ \normalsize presented to the University Of Waterloo\\ \normalsize in fulfilment of the \\ \normalsize thesis requirement for the degree of\\ \normalsize Master of Mathematics \\ \normalsize in \\ \normalsize  Pure Mathematics \\ \\ \\ \\ \\ \\ \normalsize Waterloo, Ontario, Canada, 2021 \\ \normalsize \copyright \, \, Aidan Patterson 2021}
\renewcommand\footnotemark{}
\begin{document}
\frontmatter{}
\maketitle
\thispagestyle{empty}
\vspace{-9em}
\pagebreak
\section*{Author's Declaration}
I hereby declare that I am the sole author of this thesis. This is a true copy of the thesis, including any required final revisions, as accepted by my examiners. I understand that my thesis may be made electronically available to the public.
\pagebreak
\begin{abstract}
  \noindent The goal of this paper is to develop the theory of Courant algebroids with integrable para-Hermitian vector bundle structures by invoking the theory of Lie bialgebroids. We consider the case where the underlying manifold has an almost para-complex structure, and use this to define a notion of para-holomorphic algebroid. We investigate connections on para-holomorphic algebroids and determine an appropriate sense in which they can be para-complex. Finally, we show through a series of examples how the theory of exact para-holomorphic algebroids with a para-complex connection is a generalization of both para-K\"{a}hler geometry and the theory of Poisson-Lie groups.
  \end{abstract}
\pagebreak
\section*{Acknowledgements}
Thank you to Professor Moraru for supervising this thesis, and being a valuable mentor for much of my undergraduate and all of my graduate degree. I would certainly not be here without you. Thank you to my two additional referees, Professor Webster and Professor Hu, who were incredibly accommodating and agreed to do this on such short notice. Thank you to professor Harada and Professor Lane at McMaster University who first introduced me to the Iwasaw decomposition. Thank you to the staff at the University of Waterloo who helped me along the way, and particularly Rose in the Math Coffee \& Doughnut shop. Thank you to my friends in CPSO, CPSHR, AB, and my other Friends of the Filipino People in Struggle who helped me balance my life with my schoolwork during the writing of this document. Finally, Thank you to all my friends and family who have been a constant source of support along the way. 
\pagebreak
\tableofcontents
\pagebreak
\mainmatter{}
\section{ \normalsize Introduction}
    \indent There has been a growing interest in generalized geometry since Hitchin introduced the concept in his paper \textit{Generalized Calabi-Yau Manifolds} in 2002 \cite{Hitchin1}. Given a smooth manifold $M$, the natural setting for generalized geometry is the vector bundle $\mathbb{T}M:= TM\oplus T^{*}M$. This vector bundle comes equipped with a natural anti-symmetric bracket, $[\cdot,\cdot]$, introduced by Courant in his 1990 paper \textit{Dirac Manifolds} \cite{Courant}, a symmetric bilinear form $\langle \cdot,\cdot \rangle$ and a natural anchor map $\rho: TM\oplus T^{*}M\rightarrow TM$ given by projection onto the first factor. For $X\oplus \xi, Y\oplus \eta \in \Gamma(TM\oplus T^{*}M)$, the bracket and symmetric pairing are given explicitly by:
    \begin{align}\label{Standard Algebroid}
        [X\oplus \xi , Y\oplus\eta] &= [X,Y] \oplus \mathcal{L}_X\eta - \mathcal{L}_Y \xi -\frac{1}{2}d(\iota_{X}\eta - \iota_{Y}\xi),
        \\
       \nonumber \langle X \oplus \xi , Y\oplus \eta \rangle &= \xi(Y) + \eta(X).
    \end{align}
    \indent Courant also introduced the notion of a Dirac subbundle in \cite{Courant}.
    \begin{Def}
    Let $M$ be a smooth manifold. A Dirac structure on $\mathbb{T}M$ is a subbundle $L\subset \mathbb{T}M$ that is maximally isotropic and whose space of sections, $\Gamma(L)$, is  closed under the Courant bracket. 
    \end{Def}
    \noindent Dirac structures correspond directly to Poisson bivectors in the case where $L\neq TM$ or $T^{*}M$ (for a good explanation of why this is the case, see \cite{Meinrenken}), and so the search for Dirac subbundles is equivalent to the search for Poisson bivectors on $M$. 
    \\
    
    \indent On the other hand, research in para-complex geometry has been expanding as well. The fundamental object here is the \textit{almost product manifold}.
    \begin{Def}
    Let $M$ be a smooth manifold, and $J\in \Gamma(End(TM))$ with $J^2=Id_{TM}$. The pair $(M,J)$ is called an almost product manifold.
    \end{Def}
    \noindent Since $J^2 = Id_{TM}$, $J$ has two eigenvalues, $\pm1$, and so $TM$ admits a decomposition into the direct sum of the $\pm1$-eigenbundles $TM = T^{+}M\oplus T^{-}M$. If the eigenbundles have the same rank, then $J$ is called an \textit{almost para-complex structure}. The pair $(M,J)$ is called a \textit{half integrable para-complex manifold} if one of the eigenbundles is closed under the Lie bracket, and a \textit{para-complex manifold} if both eigenbundles are integrable. For an interesting survey of the historical context, and some basic results in the field leading up to the mid 1990's, see \cite{Cruceanu}. Just as in the complex case, one can ask what it means for an almost para-complex structure to be compatible with a metric on $M$. This gives the following definition.
    \begin{Def}
    Let $(M,J)$ be an almost para-complex manifold and $g$ be a non-degenerate symmetric bilinear form on $M$. Then we say that $J$ is \textit{compatible} with $g$ if $g(J\cdot,J\cdot) = -g(\cdot,\cdot)$ and in this case, we refer to $(M,g,J)$ as an \textit{almost para-Hermitian manifold}. 
    \end{Def}
    \indent It was Bejan who introduced the concepts of para-complex and para-Hermitian vector bundles in her paper \textit{The Existence Problem of Hyperbolic Structures on Vector Bundles} \cite{Bejan}. These structures lack a concept of integrability however, and so these constructions are done basically at the level of vector spaces. Importantly, Erdem introduced the concept of a para-holomorphic map between para-complex manifolds. In the context of vector bundles, we understand para-holomorphic maps as vector bundle morphisms $T:(E,J_E)\rightarrow (F,J_F)$ such that $J_F\circ T = T \circ J_{E}$. Erdem also introduced the idea of ``para-complexifying" a vector bundle using the para-complex numbers $C$ to obtain $\pm j$-eigenbundles, similar to the complex case, but with the caveat that $j^2 =1 $.
    \\
    
    \indent Where the concept of Courant algebroids and para-Hermitian vector bundles meet is in Svoboda's paper \textit{Algebroid Structures on Para-Hermitian Manifolds} \cite{Svoboda}. Motivated by the physics of Double Field Theory, Svoboda identifies para-Hermitian structures on Courant algebroids as corresponding to Lie bialgebroids, which are thoroughly studied in \cite{Roytenberg}. The condition of integrability of the eigenbundles of the para-complex structure becomes closure under the Courant bracket, and so one obtains a pair of transverse Dirac structures corresponding to the para-Hermitian structure.
    \\
    
    \indent In this paper, we are interested in exploring para-Hermitian algebroids $E\rightarrow M$ over para-Hermitian manifolds $(M,g,J)$ in order to show that an even wider class of of objects can be realized in the para-complex setting than previously thought. In this setting, it is natural to ask questions like: ``Is the obstruction to integrability of the eigenbundles of $E$ related to structures on $TM$?", ``What happens if the anchor map $\rho:E\rightarrow TM$ is para-holomorphic?", and ``Given a connection $A:TM\rightarrow E$, what does it mean for $A$ to be para-holomorphic?". The contribution that this paper aims to make is to introduce the concept of para-holomorphic algebroids, and para-complex connections. The interplay between para-complex connections and para-Hermitian/holomorphic algebroids is particularly interesting. For instance, we find that 
    \begin{Thm}
    If $(E,J)$ is an exact para-Hermitian algebroid over a para-Hermitian manifold, and admits a flat para-complex connection $A$, then $E$ is a para-holomorphic algebroid.  
    \end{Thm}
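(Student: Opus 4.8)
The plan is to use the flat para-complex connection to build a para-Dolbeault operator on $E$, check that flatness makes it integrable and compatible with the para-complex structure $J$, and then deduce para-holomorphicity of the anchor and the bracket. Throughout I write $J$ for the para-complex structure on $E$ and $J_M$ for the one on the base, with eigenbundle decompositions $TM = T^+M\oplus T^-M$ and $E = E^+\oplus E^-$. Using exactness, I regard $A\colon TM\to E$ as an isotropic splitting of $0\to T^*M\to E\xrightarrow{\rho} TM\to 0$; para-complexity of $A$ means $A\circ J_M = J\circ A$, so $A$ carries $T^\pm M$ into $E^\pm$, and flatness means $[AX,AY] = A[X,Y]$, so that $A(TM)$ is a Dirac structure isomorphic to the Lie algebroid $TM$.

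First I would produce the operator. For $\bar X\in\Gamma(T^-M)$ and $s\in\Gamma(E)$ set $\bar\partial_{\bar X}s := [A\bar X, s]$. The Leibniz axiom of the Courant bracket gives $\bar\partial_{\bar X}(fs) = f\,\bar\partial_{\bar X}s + (\bar X f)\,s$, so $\bar\partial$ is a partial connection along $T^-M$, i.e. a candidate para-Dolbeault operator. Its curvature is $\bar\partial_{\bar X}\bar\partial_{\bar Y} - \bar\partial_{\bar Y}\bar\partial_{\bar X} - \bar\partial_{[\bar X,\bar Y]}$; the Jacobi identity of the Courant bracket rewrites the first two terms as $[\,[A\bar X,A\bar Y],s\,]$, flatness replaces $[A\bar X,A\bar Y]$ by $A[\bar X,\bar Y]$, and integrability of $T^-M$ (since $M$ is para-complex) keeps $[\bar X,\bar Y]$ inside $T^-M$, so all three terms cancel and $\bar\partial^{\,2}=0$. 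By the para-complex analogue of the Koszul--Malgrange correspondence — or directly by the definition of a para-holomorphic structure — a flat para-Dolbeault operator endows $E$ with a para-holomorphic structure.

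The step I expect to be the main obstacle is compatibility of this structure with $J$: I must show $\bar\partial$ preserves each eigenbundle, so that $E^\pm$ are genuine para-holomorphic subbundles. Since $A\bar X\in\Gamma(E^-)$, the operator $\bar\partial$ brackets a section of $E^-$ against $s$; for $s\in\Gamma(E^-)$ the result lies in $E^-$ by closure of the Dirac structure $E^-$, but for $s\in\Gamma(E^+)$ one only knows a priori that $[A\bar X,s]\in[E^-,E^+]$, which may have a nonzero $E^-$-component. The theorem reduces to showing this mixed $E^-$-component vanishes, and I expect this to be exactly what flatness of $A$ supplies: I would decompose the curvature of the para-complex connection into para-type components and match the offending mixed bracket with the component that $F_A = 0$ kills. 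Carrying out this identification — while tracking the exact/Jacobiator correction terms of the antisymmetric Courant bracket against the bracket-closure of $E^\pm$ — is the technical heart of the proof.

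Granting $J$-compatibility, para-holomorphicity of the structure maps follows more formally. For the anchor one needs $\rho\circ J = J_M\circ\rho$, which is part of the compatibility of a para-Hermitian algebroid with its para-Hermitian base (or can be read off from para-complexity of $A$); then $\rho\,\bar\partial_{\bar X}s = \rho[A\bar X,s] = [\bar X,\rho s]$, since $\rho$ is a morphism of brackets and $\rho A = \mathrm{Id}$, exhibiting $\rho$ as intertwining the para-Dolbeault operators on $E$ and on $TM$, hence para-holomorphic. For the bracket, the Courant--Jacobi identity gives $\bar\partial_{\bar X}[s_1,s_2] = [\bar\partial_{\bar X}s_1, s_2] + [s_1, \bar\partial_{\bar X}s_2]$, so the bracket of two para-holomorphic sections is again para-holomorphic; thus the bracket is a para-holomorphic operation. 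Together these statements show that $(E,J)$ is a para-holomorphic algebroid.
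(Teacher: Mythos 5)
Your proposal breaks down at the one step that, under the paper's own definitions, carries the entire content of the theorem, and the machinery you build is aimed at a different target. The paper \emph{defines} a para-holomorphic algebroid to be a para-Hermitian algebroid over a para-Hermitian manifold whose anchor satisfies $\rho\circ J = J_{TM}\circ\rho$; that identity is therefore the whole conclusion to be proved. Your final paragraph instead waves it through, asserting it is ``part of the compatibility of a para-Hermitian algebroid with its para-Hermitian base'' or ``can be read off from para-complexity of $A$.'' Neither is true. Writing any section as $e = A(v)+\rho^{*}(\xi)$ (exactness), para-complexity of $A$ controls only the first summand: $\rho(Je) = J_{TM}(v)+\rho(J\rho^{*}(\xi))$ while $J_{TM}\rho(e)=J_{TM}(v)$, so the conclusion is equivalent to $\rho\circ J\circ \rho^{*}=0$, i.e.\ to $J$ preserving $\ker\rho=\mathrm{Im}(\rho^{*})$, which is exactly the vanishing of the bivector $\pi$ of equation \eqref{Poisson on M}. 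Nothing in your argument constrains this quantity: flatness is spent entirely on $\bar\partial^{\,2}=0$, and your computation $\rho[A\bar X,s]=[\bar X,\rho s]$ holds whether or not $\rho J\rho^{*}=0$. This is precisely where the paper's proof does its work: para-complexity gives $A_{\pm}:T^{\pm}M\to E_{\pm}$, exactness gives $E_{\pm}=a_{\mp}^{*}(\,\cdot\,)\oplus A_{\pm}(T^{\pm}M)$, and Proposition \ref{Exact Hermitian}, $\ker(a_{\pm})=Im(a_{\mp}^{*})$, is what forces $\rho$ to kill the first summand, giving $\rho(E_{\pm})\subseteq T^{\pm}M$. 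Your para-Dolbeault construction never makes contact with that step; note also that even if completed, a para-holomorphic structure in the Dolbeault/Koszul--Malgrange sense on $E$ is simply not the same condition as the anchor identity the theorem asserts.

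The gap is not reparable by rearranging your argument, because everything you use can hold while the conclusion fails. Take $M=\mathbb{R}^{4}$ with $T^{+}M=\mathrm{span}(\partial_{x^{1}},\partial_{x^{2}})$, $T^{-}M=\mathrm{span}(\partial_{y^{1}},\partial_{y^{2}})$, and the standard Courant algebroid \eqref{Standard Algebroid} on $\mathbb{T}M$; let $E_{+}=\{X\oplus\iota_{X}\omega\}$ be the graph of the closed $2$-form $\omega = 2\,dy^{2}\wedge dy^{1}$, and $E_{-}=T^{-}M\oplus \mathrm{Ann}(T^{-}M)$. These are transverse isotropic Dirac structures, so the involution $J$ with $(\pm1)$-eigenbundles $E_{\pm}$ is a compatible, integrable para-complex structure; the inclusion $A:X\mapsto X\oplus 0$ is a flat connection, and it is para-complex because $T^{\pm}M\oplus 0\subset E_{\pm}$. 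Yet $0\oplus dy^{1}=\left(\tfrac12\partial_{y^{2}}\oplus dy^{1}\right)+\left(-\tfrac12\partial_{y^{2}}\oplus 0\right)$ gives $J(0\oplus dy^{1})=\partial_{y^{2}}\oplus dy^{1}$, so $\rho J\rho^{*}(dy^{1})=\partial_{y^{2}}\neq 0$ and this algebroid is \emph{not} para-holomorphic. As far as I can check, this example satisfies every hypothesis of Theorem \ref{Main Thm}, so it also isolates the fragile point in the paper's own argument: the proof of Proposition \ref{Exact Hermitian} establishes only $\ker(a_{\pm})\subseteq \mathrm{Im}(a_{\mp}^{*})$, and the reverse inclusion, which fails here since $a_{+}(a_{-}^{*}(dy^{1}))=\tfrac12\partial_{y^{2}}$, is equivalent to the theorem's conclusion. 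Whatever one decides about the statement itself, any correct proof must establish that inclusion, and your proposal never engages with it. Two smaller technical errors: with the paper's skew-symmetric bracket, axiom 3 of the Courant algebroid definition gives $[A\bar X,fs]=f[A\bar X,s]+(\bar Xf)s-\tfrac12\langle A\bar X,s\rangle\beta^{-1}d_{E}f$, so your Leibniz rule, and with it the claim that $\bar\partial$ is a partial connection on all of $E$, fails whenever $\langle A\bar X,s\rangle\neq 0$; and the bracket satisfies Jacobi only up to the Jacobiator $\beta^{-1}d_{E}T$, so $\bar\partial^{\,2}=0$ requires tracking corrections that you acknowledge but never carry out.
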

    \indent A consequence of considering para-complex connections is that we arrive naturally at the concept of a \textit{split-para-complex structure}. In short, a split-para-complex structure is a vector bundle $E$ together with two para-complex structures $J,K$, such that $JK=KJ$. For an exact para-Hermitian algebroid $(E,J)$ with a connection $A$ and an anchor map $\rho : E\rightarrow TM$, we can define the para-complex structure $K$ to take the value $+1$ on $A(TM)$ and $-1$ on $\rho^{*}(T^{*}M)$. In general, we can take any splitting $E=\rho^{*}(T^{*}M)\oplus H$, and define the para-complex structure $K$ similarly. This identification of connections and almost para-complex algebroids leads us to the observation: 
    \begin{Thm}
    If $(E,J)$ is an exact para-holomorphic algebroid, then $E$ admits a flat para-complex connection if and only if $(E,J,K)$ is split-para-complex.   
    \end{Thm}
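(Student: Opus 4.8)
The plan is to set up a dictionary between connections (splittings of the exact sequence $0\to\rho^{*}(T^{*}M)\to E\xrightarrow{\rho}TM\to 0$) and almost para-complex structures $K$ of the prescribed type, and then to show that under this dictionary the phrase ``$A$ is a flat para-complex connection'' translates term-by-term into ``$(E,J,K)$ is split-para-complex.'' Concretely, a connection $A$ determines $K$ by declaring $K=+\mathrm{Id}$ on $A(TM)$ and $K=-\mathrm{Id}$ on $\rho^{*}(T^{*}M)$; conversely, given such a $K$ whose $(-1)$-eigenbundle is $\rho^{*}(T^{*}M)$, its $(+1)$-eigenbundle $H$ is a complement to $\rho^{*}(T^{*}M)$ that $\rho$ carries isomorphically onto $TM$, so that $A:=(\rho|_{H})^{-1}$ is a connection with $A(TM)=H$. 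Since $E$ is exact, $\mathrm{rk}\,H=\mathrm{rk}\,\rho^{*}(T^{*}M)=\dim M$, so $K$ is automatically an almost para-complex structure, and the two nontrivial conditions to match are the commutation $JK=KJ$ (the ``para-complex'' part) and the integrability of $K$ (the ``flat'' part). I would prove the theorem by establishing these two equivalences separately and then running the dictionary in both directions.

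For the algebraic equivalence, I would first use that $E$ is para-holomorphic, i.e. that $\rho$ intertwines the para-complex structures of $E$ and $M$ (both written $J$), $\rho J=J\rho$, to show that $J$ preserves $\ker\rho=\rho^{*}(T^{*}M)$: if $\rho(e)=0$ then $\rho(Je)=J\rho(e)=0$. Because two commuting para-complex structures preserve each other's eigenbundles, $JK=KJ$ holds if and only if $J$ preserves both eigenbundles of $K$; as $J$ already preserves $\rho^{*}(T^{*}M)$, this reduces to $J$ preserving $H=A(TM)$. Finally I would check that $J(A(TM))\subseteq A(TM)$ is equivalent to $A$ being para-complex: applying $\rho$ to $JA(X)$ and using $\rho J=J\rho$ gives $\rho(JA(X))=J\rho(A(X))=JX$, so the unique element of $A(TM)$ lying over $JX$ is $A(JX)$, forcing $JA=AJ$; the converse is immediate. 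This yields $JK=KJ\iff A$ is para-complex.

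For the differential equivalence, I would invoke the standard fact that in an exact Courant algebroid $\rho^{*}(T^{*}M)=\ker\rho$ is an isotropic subbundle whose sections are closed under the Courant bracket (for $\xi,\eta\in\Gamma(\rho^{*}(T^{*}M))$ every term of the bracket vanishes because the relevant anchors do), so $\rho^{*}(T^{*}M)$ is always involutive. An almost para-complex structure $K$ is integrable precisely when both eigenbundles are bracket-closed; with the $(-1)$-eigenbundle automatically involutive, integrability of $K$ is equivalent to involutivity of $H=A(TM)$. On the other hand, flatness of $A$ is the vanishing of the curvature of the splitting, which for an exact Courant algebroid is exactly the obstruction to $A(TM)$ being closed under the bracket. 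Hence $A$ is flat $\iff H$ is involutive $\iff K$ is integrable. Combining with the previous paragraph, $A$ is a flat para-complex connection if and only if $K$ is an integrable para-complex structure with $JK=KJ$, i.e. if and only if $(E,J,K)$ is split-para-complex, which gives both implications.

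The step I expect to be the main obstacle is the differential equivalence, and in particular pinning down ``flat'' so that it coincides cleanly with involutivity of $H$. For a non-isotropic splitting the Courant bracket need not restrict to a Lie bracket on $A(TM)$, so I would first reduce to an isotropic splitting (modifying $A$ by a suitable two-form), identify the curvature with the representative of the \v{S}evera class cut out by the splitting, and verify that this is the genuine obstruction to $\Gamma(H)$ being bracket-closed, while confirming that such a modification does not disturb the para-complex condition established algebraically. A secondary point requiring care is checking that the $K$ produced in the forward direction really is a para-complex structure of the prescribed form and that the $H$ extracted in the backward direction is transverse to $\rho^{*}(T^{*}M)$, so that $A=(\rho|_{H})^{-1}$ is well defined; both follow from exactness but should be recorded explicitly.
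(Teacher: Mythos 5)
Your proposal is correct and follows essentially the same route as the paper: you identify $K$ with the splitting $E=\rho^{*}(T^{*}M)\oplus H$, use para-holomorphicity of $\rho$ to translate $JK=KJ$ into $A\circ J_{TM}=J_E\circ A$, and match flatness of $A$ with involutivity of $H$ exactly as in Theorem \ref{Standard para-complex structure}. The only divergence is your worry about non-isotropic splittings, which does not arise here because the paper's definition of a connection already requires $\langle A(v_1),A(v_2)\rangle=0$.
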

    \indent Finally, we will present a class of para-Hermitian algebroids over the exact Courant algebroid $\mathbb{T}G$ for a Lie group $G$ with a quadratic Lie algebra $\mathfrak{g}$, as well as a class of exact para-holomorphic algebroids with flat para-complex connections over $\mathbb{T}G$ when $\mathfrak{g}$ is also semi-simple. In the second class of examples, we realize a compact real form of the Lie algebra $\mathfrak{g}$ as corresponding to a special para-holomorphic structure on $\mathbb{T}G$, where $G$ is the Drinfeld double of $K$, endowed with a compatible para-complex connection induced by the diagonal inclusion $G \hookrightarrow G\times G$ (referred to as the Cartan-Dirac structure). We will also construct a separate para-holomorphic structure with flat para-complex connection on $\mathbb{T}G$ in the case where $G$ is the Drinfeld double of a complete simply-connected Poisson-Lie group $(K,\Pi_K)$. The goal of this section is to concretely establish that exact para-holomorphic algebroids with flat para-complex connections are not only common, but comprise some well known objects. The impression we try to give is that para-Hermitian geometry is general enough to include interesting objects and provides a direction for future generalizations. 
    \\
    
    \indent Roytenberg studied Lie bialgebroids using the framework of supermanifolds in \cite{Roytenberg}, and so in the future it would be interesting to generalize para-holomorphic algebroids using this formalism. In particular, it would be interesting to try and cast the para-complex structure as a function on $T^{*}\Pi E_{+}$, as they do with other important objects like the Courant bracket. It would also be interesting to try and find examples of para-holomorphic structures with compatible para-complex connections that are not simply the diagonal embedding, and are perhaps not flat as well. 
\pagebreak
\section{ \normalsize Para-Complex Vector Bundles}
\indent Para-complex manifolds and para-complex vector bundles are straightforward generalizations of complex manifolds and vector bundles. The utility of the para-complex setting is that the splitting of the vector bundle into eigenbundles occurs at the level of the vector bundle, and does not require you to ``para-complexify". One can generalize basically any fact about complex manifolds and vector bundles to the para-complex setting to obtain analogous results and fundamental objects, which is what we will summarize in this chapter. 
\\

\indent Of special importance is the so called para-Hermitian structure. The main interest in this structure comes from the fact that the eigenbundles of the para-complex structure become maximally isotropic. In the context of quadratic Lie algebras, orthogonal maximally isotropic subalgebras arise naturally in the study of Poisson-Lie groups. With our goal of finding a single object that corresponds naturally to para-K\"{a}hler manifolds and also Poisson-Lie groups, we present the theory of para-complex vector bundles. 
\subsection{ \normalsize Basic Properties}
Throughout this paper, let $M$ be a smooth manifold of dimension $n$ and $E\rightarrow M$ be a smooth real vector bundle over $M$. 
\begin{Def}
A product structure on $E$ is a section $J\in \Gamma(End(E))$ such that $J^2= Id_{E}$.
\end{Def}
\indent On each fiber $E_p := \pi^{-1}(p)$, $p\in M$, we find that $J$ has the two possible eigenvalues $1$ and $-1$. This induces a decomposition of $E_p$ into the direct sum of the eigenspaces $E_p = E_p^{+}\oplus E_{p}^{-}$, which in turn induces an eigenbundle decomposition $E= E^{+} \oplus E^{-}$. When the subbundles $E^{+}$ and $E^{-}$ have the same rank (hence the rank of $E$ must be even), we say that $J$ is a \textit{para-complex structure}. This decomposition of the vector bundle $E$ induces a decomposition of the dual bundle as follows:
\\

Let $E_{+}^{*} := \{\omega \in E^{*} \, \vert \,  \omega(Z)=0 \textrm{ for all } Z\in E^{-}\}$ and  $E_{-}^{*} := \{\omega \in E^{*} \, \vert \,  \omega(Z)=0 \textrm{ for all } Z\in E^{+}\}$. We have the decomposition $E^{*} = E^{*}_{+}\oplus E^{*}_{-}$. This splitting induces a decomposition of the $k^{th}$-exterior bundle of $E$ given by 
\begin{align*}
    \extp^k E^{*} &= \bigoplus_{p+q=k} E^{*}_{(p,q)},
\end{align*}
where in this case $E^{*}_{(p,q)} := \Lambda^p E^{*}_{+}\wedge \Lambda^q E_{-}^{*}$. An alternative definition involves the ring of para-complex numbers $C = \{ a+ jb\,  \vert \, j^2 =1, \,   \,  a,b\in\mathbb{R}\}$.
\begin{Def}
    A para-complex vector bundle is a vector bundle whose fibers are isomorphic to the free module $C^n$, with the multiplication by $j$ as the para-complex structure. 
\end{Def}
The transition functions of such a bundle will then be smooth (not necessarily para-holomorphic) maps $g_{\alpha\beta} : (U_\alpha\cap U_\beta)\times C^n \rightarrow (U_\alpha\cap U_\beta)\times C^n$ satisfying the relations $g_{\alpha\alpha} = Id_{E\vert_{U_{\alpha}}}$ and $g_{\alpha\beta}\circ g_{\beta \gamma}\circ g_{\gamma \alpha}= Id_{E\vert_{U_{\alpha}\cap U_{\beta}\cap U_{\gamma}}}$. Just as in the complex case, placing restrictions on the transition functions allows us to introduce the concept of para-holomorphic vector bundles, which we discuss later. We now consider the case of the tangent bundle and the interaction of these eigenbundles with the natural Lie algebroid structure on $TM$ that comes from the Lie bracket.
\subsection{ \normalsize The Case of the Tangent Bundle}
An important case to consider is when $E=TM$. In this case, the pair $(M,J)$ is referred to as an \textit{almost para-complex manifold}. The famous Newlander-Nirenberg theorem in this context relates the almost para-complex structure $J$ on $TM$ to the integrability of the eigenbundles $T^{+}M$ and $T^{-}M$.
\begin{Thm}
(Newlander-Nirenberg \cite{Moroianu}): Let $(M,J)$ be an almost para-complex manifold. Then $J$ is induced by a para-holomorphic atlas if and only if the distributions $T^{+}M$ and $T^{-}M$ are integrable.
\end{Thm}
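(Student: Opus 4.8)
The plan is to reduce the statement to the Frobenius theorem, exploiting the fact that, unlike the field $\mathbb{C}$, the para-complex ring $C$ splits as a product $\mathbb{R}\times\mathbb{R}$ via the idempotents $e_{\pm}=\tfrac12(1\pm j)$, on which $j$ acts by $\pm 1$ (indeed $j\,e_{+}=e_{+}$ and $j\,e_{-}=-e_{-}$). Under this identification a para-holomorphic change of coordinates is nothing more than a pair of ordinary smooth changes of coordinates, one for each factor, so the entire content of the theorem becomes the existence of coordinates simultaneously adapted to the two real eigenbundle foliations. This is the structural reason the para-complex Newlander--Nirenberg theorem requires no hard analysis, in contrast to its complex counterpart, where conjugate eigenbundles force one to solve an overdetermined PDE.

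For the forward direction, suppose $J$ is induced by a para-holomorphic atlas. In a para-holomorphic chart write the coordinate as $z=\xi e_{+}+\eta e_{-}$ with $\xi,\eta\in\mathbb{R}^{n}$. Since $j$ acts as $+1$ on the $e_{+}$-part and $-1$ on the $e_{-}$-part, the eigenbundles are exactly the coordinate distributions $T^{+}M=\mathrm{span}(\partial_{\xi^{1}},\dots,\partial_{\xi^{n}})$ and $T^{-}M=\mathrm{span}(\partial_{\eta^{1}},\dots,\partial_{\eta^{n}})$, which are manifestly involutive; hence $T^{\pm}M$ are integrable.

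For the backward direction, which is the main step, suppose $T^{+}M$ and $T^{-}M$ are both integrable. Each is then an involutive distribution, and the two are transverse and complementary, $TM=T^{+}M\oplus T^{-}M$. I would invoke the simultaneous version of the Frobenius theorem for two complementary transverse involutive distributions to produce, around each point, coordinates $(\xi,\eta)$ with $T^{+}M=\mathrm{span}(\partial_{\xi})$ and $T^{-}M=\mathrm{span}(\partial_{\eta})$, so that $J=\mathrm{diag}(\mathrm{Id}_{n},-\mathrm{Id}_{n})$ there. The remaining task is to verify that the transition map between two such adapted charts is para-holomorphic. Since such a map must carry leaves of $T^{+}M$ to leaves of $T^{+}M$ and leaves of $T^{-}M$ to leaves of $T^{-}M$, the coordinate $\tilde\eta$ is constant on each leaf $\{\eta=\text{const}\}$ of $T^{+}M$ and $\tilde\xi$ is constant on each leaf $\{\xi=\text{const}\}$ of $T^{-}M$, which forces
\[
\tilde\xi=\tilde\xi(\xi),\qquad \tilde\eta=\tilde\eta(\eta).
\]
Writing $z=\xi e_{+}+\eta e_{-}$ and $\tilde z=\tilde\xi e_{+}+\tilde\eta e_{-}$, the condition that $\tilde\xi$ depends only on $\xi$ and $\tilde\eta$ only on $\eta$ is precisely the statement that $\tilde z$ is a para-holomorphic function of $z$, i.e.\ that the transition respects the product decomposition $C\cong\mathbb{R}\times\mathbb{R}$. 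These charts therefore form a para-holomorphic atlas inducing $J$.

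The main obstacle is the backward direction, and within it the one genuinely geometric input is the simultaneous straightening of the two transverse foliations into a single coordinate grid; once that is available, the verification that transition maps are para-holomorphic is purely formal. I would take particular care that the labelling of the eigenbundles matches the $\pm1$ action of $j$ under the idempotent identification, since it is exactly this matching that upgrades the straightened real coordinates to genuinely para-holomorphic ones rather than merely smooth adapted coordinates.
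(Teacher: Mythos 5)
Your proof is correct. Note that the paper itself does not prove this theorem; it is quoted from the literature (\cite{Moroianu}, \cite{Krahe}), so there is no in-text argument to compare against. Your route --- split $C\cong\mathbb{R}\times\mathbb{R}$ via the idempotents $e_{\pm}=\tfrac12(1\pm j)$, straighten the two complementary involutive distributions simultaneously, and observe that a transition map preserving both foliations must split as $(\xi,\eta)\mapsto(\tilde\xi(\xi),\tilde\eta(\eta))$, which is exactly para-holomorphicity --- is the standard proof, and it correctly isolates why the para-complex case reduces to Frobenius with no analogue of the hard analysis in the complex Newlander--Nirenberg theorem. The one step you invoke without justification, the simultaneous straightening, is genuine but deserves a line: take submersions $f$ and $g$ from the ordinary Frobenius theorem whose fibres are the local leaves of $T^{+}M$ and $T^{-}M$ respectively; since $\ker df=T^{+}M$, $\ker dg=T^{-}M$ and $T^{+}M\cap T^{-}M=0$, the pair $(g,f)$ has injective differential and is therefore a local diffeomorphism, giving the adapted chart. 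With that supplied, the argument is complete.
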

\indent We will discuss the concept of para-holomorphicity in the next section. The integrability of the eigenbundles induces coordinates on $M$ of the form $(\mathbf{z}_{+},\mathbf{z}_-)$. The partial derivatives with respect to the $\mathbf{z}_{\pm}$ coordinates span eigenbundles $T^{\pm}M$. As in the complex case, there is a $(2,1)-$tensor $N^J$, called the \textit{Nijenhuis tensor}, that measures the integrability of these eigenbundles:
\begin{align}\label{Nijenhuis Tensor}
   4 N^J(X,Y) = \left([X,Y] - J[X,JY] - J[JX,Y] +[JX,JY]\right).
\end{align}
One can easily see that $N^J=0$ if and only if $T^{+}M$ and $T^{-}M$ are integrable. In the case where $M$ is itself a para-complex manifold (meaning that it has an integrable para-complex structure), we can define the notion of a para-holomorphic vector bundle over $M$ using the concept of $(p,q)$-forms with values in $E$. If $M$ is a para-complex manifold, then we have a decomposition of the differential forms on $M$ into types and can give an expression for the image of the deRham differential with respect to this decomposition:
\begin{align*}
    \Omega^k M &= \bigoplus_{p+q=k}\Omega^{(p,q)}M,
    \\
    d\Gamma(\Omega^{(p,q)}M)&\subset \Gamma(\Omega^{(p+1,q)}M)\oplus \Gamma(\Omega^{(p,q+1)}M).
\end{align*}
This is summed up in the following theorem.
\begin{Thm}\label{Classical Decomposition of Forms}
(\cite{Krahe}, \cite{Moroianu}) Let $J$ be an almost para-complex structure on $M$. Then the following are equivalent:
\begin{enumerate}
    \item J is a para-complex structure.
    \item $T^+ M$ and $T^{-}M$ are integrable. 
    \item $d\Gamma(\Omega^{(1,0)}M)\subset \Gamma(\Omega^{(2,0)}M\oplus \Omega^{(1,1)}M)$ and $d\Gamma(\Omega^{(0,1)}M)\subset \Gamma(\Omega^{(1,1)}M\oplus \Omega^{(0,2)}M)$.
    \item $d\Gamma(\Omega^{(p,q)}M)\subset \Gamma(\Omega^{(p+1,q)}M \oplus \Omega^{(p,q+1)}M)$.
    \item $N^J =0$.
\end{enumerate}
\end{Thm}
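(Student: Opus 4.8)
The plan is to prove the five conditions equivalent by leaning on two facts that are essentially already in hand and then doing the real work of linking the form-degree conditions (3)--(4) to integrability (2). The equivalence (1)$\iff$(2) is exactly the para-complex Newlander--Nirenberg theorem quoted above: an integrable para-complex structure is one induced by a para-holomorphic atlas, which holds precisely when both $T^{+}M$ and $T^{-}M$ are integrable. The equivalence (2)$\iff$(5) is the short Nijenhuis computation already alluded to in the text. So I would organize the argument around the blocks (2)$\Leftrightarrow$(5), (2)$\Leftrightarrow$(3), (3)$\Rightarrow$(4)$\Rightarrow$(3), together with (1)$\Leftrightarrow$(2).

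For (2)$\iff$(5) I would evaluate the Nijenhuis tensor on the three types of argument pairs. For $X,Y\in\Gamma(T^{+}M)$ one has $JX=X$ and $JY=Y$, and formula \eqref{Nijenhuis Tensor} collapses to $4N^{J}(X,Y)=2(\mathrm{Id}-J)[X,Y]=4[X,Y]^{-}$, the projection of $[X,Y]$ onto $T^{-}M$; this vanishes for all such $X,Y$ iff $T^{+}M$ is integrable. Symmetrically, on $\Gamma(T^{-}M)$ we obtain the $T^{+}M$-projection of the bracket, controlling integrability of $T^{-}M$, while on a mixed pair $X\in\Gamma(T^{+}M)$, $Y\in\Gamma(T^{-}M)$ the four terms cancel identically. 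Hence $N^{J}\equiv 0$ iff both eigenbundles are integrable.

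The core of the theorem is (2)$\iff$(3), which I would establish with the intrinsic formula $d\alpha(X,Y)=X(\alpha(Y))-Y(\alpha(X))-\alpha([X,Y])$. Take $\alpha\in\Gamma(\Omega^{(1,0)}M)$, so that $\alpha$ annihilates $T^{-}M$, and feed it $X,Y\in\Gamma(T^{-}M)$: the first two terms drop out and $d\alpha(X,Y)=-\alpha([X,Y])$. The statement that $d\alpha$ has no $(0,2)$-component is exactly that this quantity vanishes for every such $\alpha$, which -- since $\Omega^{(1,0)}M$ is the full annihilator of $T^{-}M$ -- is equivalent to $[X,Y]\in\Gamma(T^{-}M)$, i.e. integrability of $T^{-}M$. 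The second clause of (3), applied to $(0,1)$-forms, yields integrability of $T^{+}M$ in the same way, so (2)$\iff$(3).

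Finally, (4)$\Rightarrow$(3) is the special case $p+q=1$, and (3)$\Rightarrow$(4) is a bookkeeping argument using that $d$ is an antiderivation. Choosing local frames $\{\theta^{i}\}$ of $E^{*}_{+}$ and $\{\eta^{a}\}$ of $E^{*}_{-}$, every $(p,q)$-form is a $C^{\infty}(M)$-combination of terms $f\,\theta^{i_{1}}\wedge\cdots\wedge\theta^{i_{p}}\wedge\eta^{a_{1}}\wedge\cdots\wedge\eta^{a_{q}}$. Applying the Leibniz rule, $d$ hits either $f$ (giving a $(1,0)+(0,1)$ form), a factor $\theta^{i}\in\Omega^{(1,0)}M$ (giving $(2,0)+(1,1)$ by (3)), or a factor $\eta^{a}\in\Omega^{(0,1)}M$ (giving $(1,1)+(0,2)$ by (3)); wedging with the surviving factors, every resulting term lands in $\Omega^{(p+1,q)}M\oplus\Omega^{(p,q+1)}M$. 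Combined with (1)$\iff$(2) from Newlander--Nirenberg, this closes all the equivalences. I expect the main obstacle to be organizational rather than computational: keeping the bidegree bookkeeping and the annihilator conventions (which dual eigenbundle kills which tangent eigenbundle) consistent, since a convention slip in the identification $\Omega^{(1,0)}M\cong(T^{-}M)^{\circ}$ would swap the roles of $T^{+}M$ and $T^{-}M$ throughout.
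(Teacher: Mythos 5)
Your proof is correct. Note that the paper does not actually prove this theorem --- it is quoted from \cite{Krahe} and \cite{Moroianu} without argument --- so there is no in-paper proof to compare against; your chain of implications $(1)\Leftrightarrow(2)\Leftrightarrow(5)$, $(2)\Leftrightarrow(3)$, $(3)\Leftrightarrow(4)$ is the standard one, and the key computations (the Nijenhuis tensor collapsing to $4[X,Y]^{\mp}$ on the $\pm$-eigenbundles and vanishing on mixed pairs, and the intrinsic formula $d\alpha(X,Y)=-\alpha([X,Y])$ identifying the $(0,2)$-part of $d\alpha$ with the failure of $T^{-}M$ to be involutive) are all accurate and consistent with the paper's annihilator conventions.
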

\indent The fourth condition allows us to decompose the exterior derivative into the sum of two operators, $d= \partial_+ + \partial_-$, with the property that $\partial_+ : \Omega^{(p,q)}M\rightarrow \Omega^{(p+1,q)}M$ and $\partial_-: \Omega^{(p,q)}M\rightarrow \Omega^{(p,q+1)}M$. From the fact that $d^2 =0$, we derive the identities $\partial_+^2 = 0$, $\partial_-^2 = 0$ and $\partial_-\partial_+ +\partial_+ \partial_-=0$ from the fact that the type decomposition of forms is disjoint. In keeping with the complex setting, we obtain a local $\partial_\pm$-Poincar\'{e} lemma and a local $\partial_+\partial_-$-lemma. 
\begin{Lem}
    (Local $\partial_\pm$-Poincar\'{e} Lemma \cite{Krahe}): Any $\partial_+$-closed form $\omega \in \Omega^{(p,q)}U$, $p\geq 1$, is locally $\partial_+$-exact, and similarly for $\partial_-$ when $q\geq 1$.
\end{Lem}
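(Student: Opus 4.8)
The plan is to exploit the fact that, in contrast to the complex Dolbeault--Grothendieck lemma, the para-complex $\partial_+$-Poincar\'e lemma reduces directly to the \emph{ordinary} real Poincar\'e lemma, because $\partial_+$ involves only differentiation in, and wedging by, the $\mathbf{z}_+$-coordinates. First I would use the integrability of the eigenbundles (Theorem~\ref{Classical Decomposition of Forms}, equivalently Newlander--Nirenberg) to choose around a given point adapted coordinates $(\mathbf{z}_+,\mathbf{z}_-)=(z_+^1,\dots,z_+^m,z_-^1,\dots,z_-^m)$ on a neighbourhood $U$ that is a product of intervals, hence star-shaped in the $\mathbf{z}_+$-directions for each fixed value of $\mathbf{z}_-$, with $E^*_+=\mathrm{span}\{dz_+^i\}$ and $E^*_-=\mathrm{span}\{dz_-^j\}$.

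In these coordinates a form of type $(p,q)$ reads $\omega=\sum_{|I|=p,\,|J|=q} f_{IJ}\,dz_+^I\wedge dz_-^J$, and computing $df$ and separating types gives $\partial_+ f=\sum_i (\partial f/\partial z_+^i)\,dz_+^i$ and $\partial_- f=\sum_j (\partial f/\partial z_-^j)\,dz_-^j$. Hence $\partial_+\omega=\sum_{I,J,i}(\partial f_{IJ}/\partial z_+^i)\,dz_+^i\wedge dz_+^I\wedge dz_-^J$, which is precisely the de Rham differential of $\omega$ taken in the $\mathbf{z}_+$-variables alone, treating $\mathbf{z}_-$ as an inert parameter and carrying the factors $dz_-^J$ along unchanged.

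The key step is to decouple the two sets of variables. Writing $\omega=\sum_J \omega_J\wedge dz_-^J$ with $\omega_J:=\sum_I f_{IJ}\,dz_+^I$ of type $(p,0)$, the identity $\partial_+(dz_-^J)=0$ yields $\partial_+\omega=\sum_J (\partial_+\omega_J)\wedge dz_-^J$; since the $dz_-^J$ are pointwise linearly independent, $\partial_+\omega=0$ forces $\partial_+\omega_J=0$ for every $J$. Each $\omega_J$ is then a $d$-closed $p$-form in the $\mathbf{z}_+$-variables with $p\geq 1$, depending smoothly on the parameters $\mathbf{z}_-$, so the classical Poincar\'e lemma on the star-shaped domain produces a primitive $\eta_J=K\omega_J$ via the standard homotopy operator $K$. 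Setting $\eta:=\sum_J \eta_J\wedge dz_-^J\in\Omega^{(p-1,q)}U$ and using $\partial_+\eta_J=\omega_J$ gives $\partial_+\eta=\omega$. The statement for $\partial_-$ with $q\geq 1$ follows by interchanging the roles of $\mathbf{z}_+$ and $\mathbf{z}_-$.

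The only point requiring genuine care is the \emph{smooth dependence on the parameters} $\mathbf{z}_-$: one must check that $K$, built by integrating the rescaled coefficients $f_{IJ}(t\mathbf{z}_+,\mathbf{z}_-)$ against $t$, commutes with differentiation in $\mathbf{z}_-$, so that $\eta$ is a genuine smooth $(p-1,q)$-form. This is routine differentiation under the integral sign, and it is exactly here that the para-complex case is strictly easier than its complex counterpart, where the one-variable $\bar\partial$-equation must instead be solved by the Cauchy--Pompeiu integral before inducting on dimension.
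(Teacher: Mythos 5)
Your proof is correct. The paper does not actually supply a proof of this lemma --- it is quoted from \cite{Krahe} without argument --- but your reduction is the standard one and matches the reasoning in that reference: in adapted coordinates $(\mathbf{z}_+,\mathbf{z}_-)$ the operator $\partial_+$ is just the de Rham differential in the $\mathbf{z}_+$-variables with $\mathbf{z}_-$ and the factors $dz_-^J$ carried along inertly, so the claim follows from the classical Poincar\'e lemma with smooth dependence on parameters. You correctly isolate the two points that need care (pointwise linear independence of the $dz_-^J$ to conclude $\partial_+\omega_J=0$ for each $J$, and smoothness of the homotopy operator in the parameters $\mathbf{z}_-$), and your remark that this is precisely where the para-complex case is easier than the complex Dolbeault--Grothendieck lemma is apt.
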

\begin{Lem}
    (Local $\partial_+\partial_-$-Lemma \cite{Krahe}): Let $\omega \in \Omega^{(1,1)}M$ be a 2-form. Then $\omega$ is closed if and only if for every point $x\in M$, there exists an open neighborhood $U$ containing $x$ such that $\omega\vert_U = \partial_+ \partial_- u$ for some real function $u$ on $U$.
\end{Lem}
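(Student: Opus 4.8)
The plan is to prove the two implications separately, with the reverse implication being a short computation and the forward implication carrying the real content.

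For the ``if'' direction, suppose $\omega\vert_U = \partial_+\partial_- u$. I would simply apply $d=\partial_++\partial_-$ and invoke the three structural identities recorded above. Concretely, $\partial_+(\partial_+\partial_- u)=(\partial_+^2)\partial_- u=0$ since $\partial_+^2=0$, while $\partial_-(\partial_+\partial_- u)=-\partial_+(\partial_-^2 u)=0$ by the anticommutation relation $\partial_-\partial_+=-\partial_+\partial_-$ together with $\partial_-^2=0$. Hence $d\omega\vert_U=0$; since closedness is a local condition and such $U$ cover $M$, $\omega$ is closed.

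For the ``only if'' direction, assume $d\omega=0$ and fix $x\in M$. First I would choose a coordinate cube $U$ about $x$ small enough that the ordinary Poincaré lemma and both $\partial_\pm$-Poincaré lemmas all apply, and write $\omega\vert_U = d\alpha$ for a real $1$-form $\alpha$. Because the type decomposition $T^*M=E^*_+\oplus E^*_-$ is a splitting of \emph{real} bundles, $\alpha$ splits as $\alpha=\alpha_+ + \alpha_-$ with $\alpha_+\in\Omega^{(1,0)}U$ and $\alpha_-\in\Omega^{(0,1)}U$ both real. Expanding $d\alpha = \partial_+\alpha_+ + \partial_-\alpha_+ + \partial_+\alpha_- + \partial_-\alpha_-$ and sorting by type, the $(2,0)$- and $(0,2)$-components must vanish (as $\omega$ is purely of type $(1,1)$), giving $\partial_+\alpha_+=0$ and $\partial_-\alpha_-=0$, while the $(1,1)$-component yields $\omega\vert_U = \partial_-\alpha_+ + \partial_+\alpha_-$.

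I would then feed the two vanishing conditions into the $\partial_\pm$-Poincaré Lemma. Since $\alpha_+$ is $\partial_+$-closed of type $(1,0)$ with $p=1\geq 1$, there is a function $f$ with $\alpha_+=\partial_+ f$; symmetrically $\alpha_-=\partial_- h$ for a function $h$. Substituting and using $\partial_-\partial_+=-\partial_+\partial_-$ gives $\omega\vert_U = \partial_-\partial_+ f + \partial_+\partial_- h = \partial_+\partial_-(h-f)$, so $u:=h-f$ is the desired potential. The routine bookkeeping is the type sorting, and the only point requiring genuine care is to confirm that the homotopy operators producing $f$ and $h$ preserve reality, so that $u$ is an honest real function rather than a $C$-valued one. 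I expect no serious obstacle beyond this: unlike the complex $\partial\bar\partial$-lemma, where conjugation exchanges the two summands and forces a factor of $i$, the para-complex eigenbundles are genuinely real and the identity $\partial_-\partial_+=-\partial_+\partial_-$ assembles the two local potentials into a single one for free.
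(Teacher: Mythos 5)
Your argument is correct and complete. Note that the paper itself gives no proof of this lemma --- it is quoted from \cite{Krahe} as a known result --- so there is nothing internal to compare against; your route (ordinary Poincar\'e lemma to get $\omega\vert_U=d\alpha$, type-sorting of $d\alpha=\partial_+\alpha_++\partial_-\alpha_++\partial_+\alpha_-+\partial_-\alpha_-$ to extract $\partial_+\alpha_+=0$ and $\partial_-\alpha_-=0$, then the $\partial_\pm$-Poincar\'e lemmas and the anticommutation identity to assemble $u=h-f$) is the standard one and all steps check out. Your closing remark about reality is also well taken: since $T^{*}M=E^{*}_{+}\oplus E^{*}_{-}$ is a splitting of real bundles and the homotopy operators integrate along real coordinate directions, the potentials $f$ and $h$ are genuinely real, which is exactly the point at which the para-complex statement is cleaner than its complex $\partial\bar\partial$ counterpart.
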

\subsubsection{ \normalsize Para-Holomorphic Bundles}
\indent Let $(M,J)$ be a para-complex manifold and $f:M\rightarrow C$ be a smooth function. Following \cite{Erdem}, we say that $f$ is \textit{para-holomorphic} when $df\circ J = j\circ df$. We note that any function $f:M\rightarrow C$ can be expressed as $f = \frac{1+j}{2} f_1 + \frac{1-j}{2}f_2$ for some real-valued functions $f_1,f_2\in C^{\infty}(M)$. Now, since $M$ is a para-complex manifold, we have local coordinates $(\mathbf{z}_+, \mathbf{z}_-)$, and in these coordinates, the para-Cauchy-Riemann equations read
\begin{align}
    \partial_- f_1= 0, \, \, \, \, \, \, \, \, \, \,  \partial_+ f_2=0.
\end{align}
Alternatively, if we model $M$ locally as $C^n$, then using the $+j$-and $-j$-eigencoordinates of $j$ extended to $TM\otimes_\mathbb{R}C$, $\mathbf{z}$ and $\overline{\mathbf{z}}$, the para-Cauchy-Riemann equations take on the familiar form of $\overline{\partial}f=0$, where $\overline{\partial}$ corresponds to partial derivatives with respect to the $\overline{\mathbf{z}}$ coordinates. We desire a simple expression for para-holomorphicity similar to the one in the complex setting. In order to do this, we consider the operators 
\begin{align}\label{Dolbeault}
    \partial &= \frac{1+j}{2} \, \partial_{+} + \frac{1-j}{2} \, \partial_{-}, \, \, \, \, \, \, \, \, \, \, \, \, \, \, \, \, 
    \overline{\partial} = \frac{1+j}{2} \,\partial_- + \frac{1-j}{2} \, \partial_+ .
\end{align}
We can say that $f:M\rightarrow C$ is para-holomorphic if and only if $\overline{\partial}f  = 0 $, which is useful because it allows us to avoid para-complexifying the tangent bundle in order to obtain $\pm j$-eigenbundles. Further, we retain the identity $d = \partial + \overline{\partial}$. As was hinted earlier, $E\rightarrow M$ is a para-holomorphic vector bundle over a para-complex manifold if there exists an atlas with para-holomorphic transition functions. With this in mind, we can now examine what it means for the transition functions of a para-complex bundle $E$ over a para-complex manifold $M$ to be para-holomorphic. Given a trivialization $\Psi_{U_\alpha} : \pi^{-1}(U_\alpha)\rightarrow U_\alpha\times \mathbb{R}^{2n}$, we can construct the transition functions $ \Psi_{U_\alpha}\circ\Psi_{U_\beta}^{-1}(x,v) = (x,\tau_{\alpha\beta}(x)v) $. The component functions of the map $\tau_{\alpha\beta} : U_\alpha \cap U_\beta \rightarrow GL_{2n}(\mathbb{R})$ are of interest here when determining what it means for this map to be para-holomorphic. To understand what this means, we first consider that these transition functions should be compatible with the para-complex structure on $\mathbb{R}^{2n}$. If we choose eigenvectors as the basis, we can express $\tau_{\alpha\beta}$ as a block matrix in the $z_{+}$ and $z_{-}$ coordinates. we find that when
\begin{align*}
    J\circ \begin{bmatrix}
    \rho_{\alpha\beta}^{+} & \rho_{\alpha \beta}^{-}\\
    \sigma_{\alpha\beta}^+ & \sigma_{\alpha\beta}^{-}
    \end{bmatrix} &= \begin{bmatrix}
    \rho_{\alpha\beta}^{+} & \rho_{\alpha \beta}^{-}\\
    \sigma_{\alpha\beta}^+ & \sigma_{\alpha\beta}^{-}
    \end{bmatrix} \circ J,
    \\
    \begin{bmatrix}
    \rho_{\alpha\beta}^{+} & \rho_{\alpha \beta}^{-}\\
    -\sigma_{\alpha\beta}^+ & -\sigma_{\alpha\beta}^{-}
    \end{bmatrix} &= \begin{bmatrix}
    \rho_{\alpha\beta}^{+} & -\rho_{\alpha \beta}^{-}\\
    \sigma_{\alpha\beta}^+ & -\sigma_{\alpha\beta}^{-}
    \end{bmatrix}
\end{align*}
and so
\begin{align*}
    \tau_{\alpha\beta} &= \begin{bmatrix}
    \rho_{\alpha\beta} & 0 \\
    0 & \sigma_{\alpha\beta}
    \end{bmatrix}.
\end{align*}
Further, we will see that in light of Proposition \ref{real-para-complex} and the previous discussion on para-holomorphic functions, and the fact that under the decomposition $ E_{+}\oplus E_{-}$ the transition functions are in the form $\rho_{\alpha\beta}\oplus \sigma_{\alpha\beta}$, the transition functions of the para-complexified bundle would be $\frac{1+j}{2}\rho_{\alpha\beta}+ \frac{1-j}{2}\sigma_{\alpha\beta}$. For this reason, we present the following.
\begin{prop}
\cite{Bejan} Let $E\rightarrow M$ be a rank $2n$ real vector bundle over a para-complex manifold. Then $E$ is para-complex if and only if the structure sheaf (I.e. the collection of transition functions) admits a decomposition
\begin{align*}
    g_{\alpha\beta} &= \begin{bmatrix}
    \rho_{\alpha\beta} & 0 \\
    0 & \sigma_{\alpha\beta},
    \end{bmatrix}
\end{align*}
where $\rho_{\alpha\beta}$ and $\sigma_{\alpha\beta}$ both have rank $n$. Moreover $E$ is para-holomorphic if and only if $\partial_{-}(\rho_{\alpha\beta})_{ij} = 0$ and $\partial_{+}(\sigma_{\alpha\beta})_{ij} = 0$.
\end{prop}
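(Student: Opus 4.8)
The plan is to treat the two equivalences separately, first handling the para-complex characterization and then the para-holomorphic refinement. For the first equivalence, I would begin by passing to trivializations adapted to the eigenbundle splitting. If $E$ carries a para-complex structure $J$, then $E = E^+ \oplus E^-$ with $\mathrm{rk}\, E^\pm = n$, and over any sufficiently small $U_\alpha$ I can choose a local frame consisting of $n$ sections of $E^+$ followed by $n$ sections of $E^-$; in such a frame $J$ is represented by the constant matrix $P := \mathrm{diag}(I_n, -I_n)$. The key observation is that for two such adapted frames over $U_\alpha \cap U_\beta$, the transition matrix $g_{\alpha\beta}$ must intertwine the two representations of the \emph{globally defined} operator $J$; that is, $P g_{\alpha\beta} = g_{\alpha\beta} P$. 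Since a real matrix commutes with $P$ if and only if it is block diagonal, and since $g_{\alpha\beta} \in GL_{2n}(\mathbb{R})$, each diagonal block $\rho_{\alpha\beta}, \sigma_{\alpha\beta}$ is invertible, hence of rank $n$. This gives the forward direction.

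For the converse, I would run the standard gluing construction. Given a cocycle of block-diagonal transition functions $g_{\alpha\beta} = \rho_{\alpha\beta} \oplus \sigma_{\alpha\beta}$ of the stated form, I define $J$ on each trivialization by the constant matrix $P$ and check that these local definitions patch to a global endomorphism. This is immediate because block-diagonal matrices commute with $P$, so $g_{\alpha\beta} P = P g_{\alpha\beta}$ guarantees that the locally defined operators agree on overlaps. The result is a well-defined $J \in \Gamma(\mathrm{End}(E))$ with $J^2 = \mathrm{Id}_E$ whose $\pm 1$-eigenbundles are the images of $\mathbb{R}^n \oplus 0$ and $0 \oplus \mathbb{R}^n$, each of rank $n$; thus $E$ is para-complex.

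For the \emph{moreover} statement, I would first use Proposition \ref{real-para-complex} to identify the block-diagonal real transition function $\rho_{\alpha\beta} \oplus \sigma_{\alpha\beta}$ with the $GL_n(C)$-valued map $g_{\alpha\beta} = \frac{1+j}{2}\rho_{\alpha\beta} + \frac{1-j}{2}\sigma_{\alpha\beta}$, as indicated in the discussion preceding the statement. By definition, $E$ is para-holomorphic precisely when these transition functions are para-holomorphic, which (entrywise) means $\overline{\partial}(g_{\alpha\beta})_{ij} = 0$ for the operator $\overline{\partial} = \frac{1+j}{2}\partial_- + \frac{1-j}{2}\partial_+$ of \eqref{Dolbeault}. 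The computation is then a direct application of the idempotent identities $\left(\frac{1\pm j}{2}\right)^2 = \frac{1\pm j}{2}$ and $\frac{1+j}{2}\cdot\frac{1-j}{2} = 0$: expanding $\overline{\partial} g_{\alpha\beta}$ collapses the cross terms and leaves $\frac{1+j}{2}\partial_-\rho_{\alpha\beta} + \frac{1-j}{2}\partial_+ \sigma_{\alpha\beta}$. Since $\frac{1+j}{2}$ and $\frac{1-j}{2}$ are linearly independent over $\mathbb{R}$, this vanishes if and only if $\partial_-(\rho_{\alpha\beta})_{ij} = 0$ and $\partial_+(\sigma_{\alpha\beta})_{ij} = 0$, as claimed.

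The main obstacle is conceptual rather than computational, and it lives in the first equivalence: one must be careful that adapted trivializations exist and that the intertwining relation $P g_{\alpha\beta} = g_{\alpha\beta} P$ is exactly the condition encoding globality of $J$, so that block-diagonality is genuinely equivalent to the presence of a para-complex structure rather than merely compatible with one. Once the correspondence between block-diagonal real cocycles and $GL_n(C)$-valued cocycles is in place via Proposition \ref{real-para-complex}, the para-holomorphic refinement is a short algebraic verification using the para-complex idempotents, and requires no further geometric input.
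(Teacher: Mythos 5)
Your proof is correct and follows essentially the same route the paper takes: the discussion immediately preceding the proposition carries out the same intertwining computation $J\circ\tau_{\alpha\beta}=\tau_{\alpha\beta}\circ J$ in an eigenvector basis to force block-diagonality, and then uses Proposition \ref{real-para-complex} to identify the block-diagonal cocycle with the $C$-valued cocycle $\frac{1+j}{2}\rho_{\alpha\beta}+\frac{1-j}{2}\sigma_{\alpha\beta}$, whose para-holomorphicity reduces via the para-Cauchy--Riemann equations to $\partial_{-}\rho_{\alpha\beta}=0$ and $\partial_{+}\sigma_{\alpha\beta}=0$. Your only additions are making the converse gluing argument and the idempotent expansion of $\overline{\partial}g_{\alpha\beta}$ explicit, steps the paper (which cites Bejan for the statement rather than giving a formal proof) leaves implicit.
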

\indent We now consider the bundle of forms. Beginning on the manifold $M$, we can construct the bundle $\Lambda^{(p,q)}E = \Lambda^{(p,q)}M\otimes E$ of $E$-valued forms on $M$ of type $(p,q)$. At this point, it is pertinent to address the difference between the real bundle $E\rightarrow M$ with the para-complex structure $J$ and its para-complexification. Recall that the para-complexification of the bundle $E$ is simply the vector bundle $E_C$ whose fibers are $(E_C)_x = E_x\otimes_{\mathbb{R}} C$, $x\in M$. If we replace the para-complex bundle $E$ with $E_C$, then one can decompose $E_C$ into its $\pm j$-eigenbundles, and much of the complex theory can be replicated, including the Dolbeault sequence. In order to extend the operator $\overline{\partial}$ defined in equation \eqref{Dolbeault} to one compatible with our understanding of the complexification, we consider the following proposition from \cite{Krahe}:
\begin{prop}\label{real-para-complex}
	For a para-complex manifold $M$, let $\Lambda^{(p,q)}M_C$ denote the forms of type $(p,q)$ with respect to the para-complexified tangent bundle $TM_C$. Then there is an ($\mathbb{R}$-linear) isomorphism
	\begin{align*}
	\varphi: \Lambda^{(p,q)}M\times \Lambda^{(q,p)}M &\rightarrow \Lambda^{(p,q)}M_C
	\\
	(\eta ,\eta') &\mapsto \frac{1+j}{2}\eta + \frac{1-j}{2}\eta'
	\end{align*}
	such that the following diagram commutes:
	\\
	\begin{center}
		\begin{tikzcd}
			\Omega^{(p,q)}M\times \Omega^{(q,p)}M \arrow[r, "\varphi"]\arrow[d, "\partial_{-}\times \partial_{+}"] & \Omega^{(p,q)}M_C \arrow[d, "\bar{\partial}"]\\	\Omega^{(p,q+1)}M\times \Omega^{(q+1,p)}M \arrow[r, "\varphi"]	& \Omega^{(p,q+1)}M_C
		\end{tikzcd}.
	\end{center}
\end{prop}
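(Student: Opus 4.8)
The plan is to reduce everything to the ring structure of $C = \mathbb{R}[j]/(j^2-1)$. Its two orthogonal idempotents $P_{\pm} := \frac{1\pm j}{2}$ satisfy $P_+^2 = P_+$, $P_-^2 = P_-$, $P_+P_- = 0$, $P_+ + P_- = 1$ and $jP_{\pm} = \pm P_{\pm}$, so that $C = \mathbb{R}P_+ \oplus \mathbb{R}P_-$ as a ring and every $C$-valued object splits canonically into two real pieces. Since $\varphi(\eta,\eta') = P_+\eta + P_-\eta'$ is literally this splitting, the whole statement should follow once I identify which real bidegrees occupy the $P_+$- and $P_-$-summands of a para-complexified $(p,q)$-form.

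First I would compute in local para-holomorphic coordinates $(\mathbf{z}_+,\mathbf{z}_-)$, taking real coframes $\theta^+ \in \Gamma(E^*_+)$ and $\theta^- \in \Gamma(E^*_-)$ that span the real $(1,0)$- and $(0,1)$-forms, and writing the $\pm j$-eigencoframe of $TM_C$ as $dw, d\bar w$. A short calculation expresses these as $dw = 2(P_+\theta^+ + P_-\theta^-)$ and $d\bar w = 2(P_-\theta^+ + P_+\theta^-)$. Wedging $p$ copies of $dw$ with $q$ copies of $d\bar w$ and using $P_+P_- = 0$ to annihilate every mixed term, exactly two monomials survive: a $P_+$-term equal to $P_+$ times a real form of type $(p,q)$, and a $P_-$-term equal to $P_-$ times a real form of type $(q,p)$. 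This is precisely the source of the transposed bidegree $(q,p)$ in the second factor of $\varphi$, and it shows that every element of $\Lambda^{(p,q)}M_C$ has the form $P_+\eta + P_-\eta'$ with $\eta$ of type $(p,q)$ and $\eta'$ of type $(q,p)$; hence $\varphi$ is well defined and onto.

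Injectivity is then routine: applying $P_+$ (resp.\ $P_-$) to $P_+\eta + P_-\eta'$ returns $P_+\eta$ (resp.\ $P_-\eta'$) by the idempotent relations, and $P_{\pm}\eta = 0$ forces $\eta = 0$ for a real $\eta$; a rank count ($2\binom{n}{p}\binom{n}{q}$ on each side) confirms $\varphi$ is an $\mathbb{R}$-linear isomorphism. For the commuting square I would apply the ($C$-linearly extended) de Rham differential to $\varphi(\eta,\eta') = P_+\eta + P_-\eta'$; since $P_{\pm}$ are constants,
\[
d\,\varphi(\eta,\eta') = P_+ d\eta + P_- d\eta' = P_+(\partial_+\eta + \partial_-\eta) + P_-(\partial_+\eta' + \partial_-\eta').
\]
By the bidegree correspondence of the previous step, the para-complex type $(p,q+1)$ part of this expression is carried by the real type $(p,q+1)$ piece inside $P_+(\cdots)$ and the real type $(q+1,p)$ piece inside $P_-(\cdots)$, namely $P_+\partial_-\eta + P_-\partial_+\eta' = \varphi(\partial_-\eta,\partial_+\eta')$. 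This simultaneously identifies $\bar\partial$ with $P_+\partial_- + P_-\partial_+$ as in \eqref{Dolbeault} and yields $\bar\partial\circ\varphi = \varphi\circ(\partial_-\times\partial_+)$, with the target bidegrees $(p,q+1)$ and $(q+1,p)$ matching the bottom row.

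The step I expect to be the main obstacle is the bookkeeping in the second paragraph: fixing the two bidegree conventions consistently and verifying that it is genuinely type $(q,p)$, not $(p,q)$, that lands in the $P_-$-summand. Once the expansions of $dw, d\bar w$ in $\theta^{\pm}$ are pinned down and the constancy of $P_{\pm}$ (so they pull through $d$) is noted, everything else is idempotent algebra.
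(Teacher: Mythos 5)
Your proposal is correct and follows essentially the same route as the paper's proof: both rest on the idempotent algebra of $\frac{1\pm j}{2}$, the identities $\frac{1\pm j}{2}\,dz^{\alpha} = \frac{1\pm j}{2}\,dz^{\alpha}_{\pm}$ and $\frac{1\pm j}{2}\,d\bar{z}^{\alpha} = \frac{1\pm j}{2}\,dz^{\alpha}_{\mp}$ to locate the $(p,q)$ and $(q,p)$ pieces, and the constancy of the idempotents to pass $d$ through $\varphi$ for the commuting square. Your explicit rank count for surjectivity fills in a step the paper leaves implicit, but the argument is otherwise the same.
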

\begin{proof}
	Suppose that $J$ is a para-complex structure on $M$. Then there are coordinates $z_{+}^{\alpha}$ on $T^{+}M$ and $z^{\alpha}_{-}$ on $T^{-}M$, which allow us to define para-holomorphic coordinates $z^{\alpha}$ in the usual way, $z^{\alpha} := \frac{1+j}{2}z^{\alpha}_{+} + \frac{1-j}{2}z^{\alpha}_{-}$. From the fact that $\left( \frac{1\pm j}{2} \right)^2 = \frac{1\pm j}{2}$ and $\frac{1+j}{2}\frac{1-j}{2} =0$, we can see that $\frac{1\pm j }{2}dz^{\alpha} = \frac{1\pm j}{2}dz_{\pm}^{\alpha}$ and $\frac{1\pm j}{2}d\bar{z}^{\alpha} = \frac{1\pm j}{2}dz^{\alpha}_{\mp}$. This means that if $\eta\in \Omega^{(p,q)}M$, then $\frac{1+j}{2}\eta\in \Omega^{(p,q)}M_C$, and if $\eta' \in \Omega^{(q,p)}M$, then $\frac{1-j}{2}\eta'\in \Omega^{(p,q)}M_C$. Therefore, the image of $\varphi$ is what we claim it to be. It is also easy to see that $\varphi$ is $\mathbb{R}$-linear as
	\begin{align*}
	\varphi\left( (\eta,\eta') + \alpha(\omega,\omega') \right) &= \frac{1+j}{2}(\eta + \alpha\omega) + \frac{1-j}{2}(\eta' + \alpha \omega')
	\\
	&= \frac{1+j}{2}\eta + \frac{1-j}{2}\eta' + \alpha \left( \frac{1+j}{2}\omega + \frac{1-j}{2}\omega' \right) 
	\\
	&= \varphi (\eta ,\eta') + \alpha \varphi(\omega,\omega'),
	\end{align*}
	for any $\alpha \in \mathbb{R}$. Furthermore, $\ker \varphi = \{0\}$ as $\phi(\eta, \eta')=0$ implies that $\frac{1+j}{2}\eta = - \frac{1-j}{2}\eta'$, which by multiplying through by $\frac{1+j}{2}$ (or $\frac{1-j}{2}$) gives $\frac{1+j}{2}\eta =0$ (or $\frac{1-j}{2}\eta' =0$). Since $\eta$ and $\eta'$ are real-valued forms, we can conclude that they are the zero form on each of their respective spaces, thus $\varphi$ is an isomorphism. For reference, the inverse of $\varphi$ is given by $(\eta, \eta') = (Re(\omega)+Im(\omega), Re(\omega)-Im(\omega))$. Finally, we can show that the diagram commutes. Recall that $\frac{1\pm j}{2}\frac{\partial}{\partial z^{\alpha}_{\mp}} = \frac{1\pm j}{2}\frac{\partial}{\partial \bar{z}^{\alpha}}$, implying
	\begin{align*}
	\varphi(\partial_{-}\eta , \partial_{+}\eta') &= \frac{1+j}{2}\partial_{-}\eta + \frac{1-j}{2}\partial_{+}\eta'
	\\
	&= \frac{1+j}{2}\bar{\partial}\eta + \frac{1-j}{2}\bar{\partial}\eta'
	\\
	&= \bar{\partial}\left( \frac{1+j}{2}\eta + \frac{1-j}{2}\eta' \right)
	\\
	&= \bar{\partial}\varphi(\eta,\eta').
	\end{align*}
\end{proof}
\indent The interesting part of Proposition \ref{real-para-complex} is that it appears as though the operator $\partial_{-}\oplus \partial_{+}$ plays the same role as $\overline{\partial}$ does in the complex setting. In the complex setting, the operator $\overline{\partial}$ sets up the Dolbeault sequence, which is the cohomological sequence of spaces of differential forms, and maps type $(p,q)$-forms to type $(p,q+1)$-forms. For para-complex bundles, the stand-in for this operator is $\partial_{-}\oplus \partial_{+}$ acting on $(p,q)\oplus(q,p)$-forms, and so we have an analogous Dolbealt sequence in the para-complex setting:
\begin{center}
    \begin{tikzcd}
    \cdots \arrow[r," \partial_{-}\oplus \partial_+"]  & \Omega^{(p,q)}M\oplus \Omega^{(q,p)}M\arrow[r," \partial_{-}\oplus \partial_+"] & \Omega^{(p,q+1)}M\oplus \Omega^{(q+1,p)}M\arrow[r," \partial_{-}\oplus \partial_+"]&\cdots
    \end{tikzcd}.
\end{center}
The choice to swap $(p,q)$ for $(q,p)$ comes directly from Proposition \ref{real-para-complex} and ensures that, as in the complex case, $\Omega^{(p,0)}M_C \cong \Omega^{(p,0)}M\oplus \Omega^{(0,p)}M$ is a para-holomorphic vector bundle (Where in the complex setting, $\Omega^{(0,p)}(M)$ is holomorphic \cite{Moroianu}). We can extend both $\partial_{+}$ and $\partial_{-}$ to $\Omega^{(p,q)}E$ in the same way as in the complex case:
\begin{align}
    \partial_{\pm} (\omega_1,\cdots , \omega_{2n}) = (\partial_{\pm}\omega_1,\cdots, \partial_{\pm}\omega_{2n}),
\end{align}
where $\omega_i \in \Omega^{(p,q)}(M)$. Additionally, we can consider the bundle
\begin{align}
    \Omega^{(p,q)}(E):=\Omega^{(p,q)}(M)\otimes E_+ \oplus \Omega^{(q,p)}(M)\otimes E_{-}.
\end{align}
In the case where $E$ is para-holomorphic, we can extend the operator $\partial_- \oplus \partial_+$ to the local sections $\omega=(\omega_1^{+},\cdots , \omega_n^{+}) \oplus (\omega_1^{-},\cdots , \omega_n^{-})\in \Omega^{(p,q)}(E)$, where $\omega_i^{+}$ are local $(p,q)$-forms and the $\omega_{i}^{-}$ are local $(q,p)$-forms, in a way that does not depend on the chosen trivialization, as the $E_{\pm}$ trivialization is invariant under $\partial_{\mp}$. Both $\partial_{+}$ and $\partial_{-}$ satisfy the Leibniz rule, and therefore so does $\partial_{-}\oplus \partial_{+}$. Using this convention, we retain the sequence
\begin{center}
    \begin{tikzcd}
    \cdots \arrow[r,"\partial_-\oplus\partial_+"] & \Omega^{(p,q)}(E)\arrow[r,"\partial_-\oplus\partial_+"]& \Omega^{(p,q+1)}(E)\arrow[r,"\partial_-\oplus\partial_+"]&\cdots,
    \end{tikzcd}
\end{center}
with the special property that $(\partial_{-}\oplus \partial_+)^2 = 0$. 
\begin{Def}
An operator $\partial_{-}\oplus\partial_{+}: \Omega^{(p,q)}(E)\rightarrow \Omega^{(p,q+1)}(E)$ for all $0\leq p,q\leq n$ on a para-complex vector bundle $E$ that satisfies the Leibniz rule is called a pseudo-paraholomorphic structure. If, moreover $(\partial_{-}\oplus \partial_{+})^2 =0 $, then $\partial_- \oplus \partial_+$ is called a para-holomorphic structure. 
\end{Def}
Just as in the holomorphic case, we have the following theorem.
\begin{Thm}
A para-complex vector bundle $(E,J)$ is para-holomorphic if and only if it has a para-holomorphic structure.  
\end{Thm}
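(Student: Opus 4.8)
The plan is to prove both implications by passing between para-holomorphic transition functions and the existence of local frames annihilated by the operator $\partial_-\oplus\partial_+$, exactly as one does for the Koszul--Malgrange theorem in the holomorphic category, but exploiting the genuine real splitting of the para-complex setting to replace the elliptic analysis by a Frobenius argument along the eigenbundle foliations.

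First I would treat the forward direction, assuming $(E,J)$ is para-holomorphic. By the block-diagonal characterization in Bejan's proposition, $E$ admits an atlas whose transition functions split as $g_{\alpha\beta}=\rho_{\alpha\beta}\oplus\sigma_{\alpha\beta}$ with $\partial_-(\rho_{\alpha\beta})_{ij}=0$ and $\partial_+(\sigma_{\alpha\beta})_{ij}=0$. Over each chart I declare the corresponding frame sections to be flat and define a local operator by applying $\partial_-$ on the $E_+$ components and $\partial_+$ on the $E_-$ components. The only thing to verify is that these local operators agree on overlaps; applying the Leibniz rule to a change of frame $e^\beta=g_{\alpha\beta}e^\alpha$ produces only the terms $\partial_-(\rho_{\alpha\beta})_{ij}$ and $\partial_+(\sigma_{\alpha\beta})_{ij}$, which vanish precisely because the transition functions are para-holomorphic. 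Hence the operator is globally well-defined, it satisfies the Leibniz rule by construction, and $(\partial_-\oplus\partial_+)^2=0$ follows from $\partial_-^2=\partial_+^2=0$, which were themselves derived from $d^2=0$. This produces a para-holomorphic structure.

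For the converse I would start from a para-holomorphic structure $\mathcal{D}=\partial_-\oplus\partial_+$ with $\mathcal{D}^2=0$ and aim to produce, around each point, a local frame of sections killed by $\mathcal{D}$; the transition functions between two such frames are then automatically para-holomorphic. Indeed, if $s_i'=\sum_j g_{ji}s_j$ with $\mathcal{D}s_i=\mathcal{D}s_i'=0$, the Leibniz rule gives $\sum_j(\mathcal{D}g_{ji})s_j=0$, so $\mathcal{D}g=0$; reading this on the two summands of $\Omega^{(p,q)}(E)=\Omega^{(p,q)}(M)\otimes E_+\oplus\Omega^{(q,p)}(M)\otimes E_-$ yields $\partial_-$-flatness of the $E_+$-block and $\partial_+$-flatness of the $E_-$-block, which is exactly the block-diagonal condition of Bejan's proposition. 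The crux is therefore the existence of these $\mathcal{D}$-flat frames, and here the para-complex setting decouples the problem: $\mathcal{D}$ acts as $\partial_-$ on $E_+$ and as $\partial_+$ on $E_-$ independently, so it suffices to find $\partial_-$-flat frames of $E_+$ and $\partial_+$-flat frames of $E_-$ separately.

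The hard part will be solving this local integrability problem, and this is where I expect the main obstacle to lie. Fixing a local frame $e_1,\dots,e_n$ of $E_+$, I write $\partial_- e_i=\sum_j A_{ij}e_j$ for a matrix $A$ of $(0,1)$-forms, and the condition $\partial_-^2=0$ translates into the Maurer--Cartan relation $\partial_- A=A\wedge A$. I would then seek a change of frame $g$ satisfying $\partial_- g=-gA$, whose solvability makes the new frame $\partial_-$-flat. Because $M$ is para-complex, the distribution $T^-M$ is an honest integrable foliation and $A$ is precisely a flat partial connection along it; unlike the holomorphic case this can be solved by real methods, either by parallel transport along the contractible leaves of a small chart (Frobenius) with smooth dependence on the transverse $\mathbf{z}_+$ coordinates, or by an order-by-order iteration using the local $\partial_-$-Poincar\'e lemma stated earlier. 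Running the symmetric argument for $\partial_+$ on $E_-$ and combining the two flat frames gives the desired $\mathcal{D}$-flat frame, and the overlap computation above then shows the resulting atlas has para-holomorphic transition functions, completing the equivalence.
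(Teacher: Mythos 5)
Your argument is correct, but note that the paper does not actually prove this theorem: it is stated bare, justified only by the phrase ``just as in the holomorphic case,'' so there is no in-text proof to compare against and your write-up supplies the missing content. The forward direction is routine and you handle it as anyone would (flat frames from a para-holomorphic atlas, Leibniz rule on overlaps killing the $\partial_-\rho_{\alpha\beta}$ and $\partial_+\sigma_{\alpha\beta}$ terms). The substance is the converse, and you correctly identify where the para-complex setting diverges from Koszul--Malgrange: because $\mathcal{D}=\partial_-\oplus\partial_+$ decouples into a partial connection on $E_+$ along the integrable foliation $T^-M$ and one on $E_-$ along $T^+M$, and because $\mathcal{D}^2=0$ is exactly leafwise flatness (your Maurer--Cartan computation $\partial_-A=A\wedge A$ is right, as is the gauge equation $\partial_-g=-gA$), the local flat frame comes from Frobenius, or equivalently from path-independent parallel transport along contractible plaques with smooth dependence on the transverse $\mathbf{z}_+$ coordinates --- no elliptic analysis needed. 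This is precisely the simplification the para-complex category buys, and it is consistent with how the paper elsewhere replaces $\overline{\partial}$ by $\partial_-\oplus\partial_+$ acting on $\Omega^{(p,q)}M\oplus\Omega^{(q,p)}M$. Two small points to tighten: (i) you should say explicitly that the block-diagonal form $g=\rho\oplus\sigma$ of the transition matrix between two $\mathcal{D}$-flat frames is already forced by compatibility with $J$ (the paper's earlier proposition), so that $\mathcal{D}g=0$ really does reduce to $\partial_-\rho=0$ and $\partial_+\sigma=0$ on the two blocks; and (ii) the ``order-by-order iteration using the local $\partial_-$-Poincar\'e lemma'' is not needed and is harder to make precise for the nonlinear gauge problem --- the parallel-transport version of the argument is the one to keep.
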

\subsubsection{Para-Hermitian Manifolds}
In contrast to the theory of complex manifolds, where the appropriate objects of study in the Hermitian context are Riemannian metrics, para-complex geometry is concerned with pseudo-Riemannian metrics of split signature. To begin:
\begin{Def}
    A pseudo-Riemannian metric $g$ is a smooth, symmetric, non-degenerate section of $E^{*}\otimes E^{*}$. Given a para-complex vector bundle $(E,J)$, $g$ is said to be compatible with $J$ if $g(JX,Y) = -g(X,JY)$ for any sections $X,Y\in \Gamma(E)$. In this case, we refer to $g$ as a para-Hermitian metric. We define the induced fundamental form $F\in \Lambda^2E^*$ by $F(X,Y) = g(X,JY)$.
\end{Def}
We now quote some useful results from Bejan \cite{Bejan}.
\begin{prop}\label{Maximally Isotropic}
	Let $(J,g)$ be a para-Hermitian structure on $E$. Then:
	\begin{enumerate}
		\item $J$ is a para-complex structure on $E$ and $\textrm{rank}(E) = 2n$.
		\item $g$ is a pesudo-Riemannian structure of signature $(n,n)$.
		\item The eigenbundles $E^{+}$ and $E^{-}$ are maximally isotropic with respect to $g$. 
	\end{enumerate}
\end{prop}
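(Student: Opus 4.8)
The plan is to argue fiberwise, since all three assertions are pointwise statements in linear algebra whose conclusions (ranks, signature) are then locally constant on $M$; the single engine driving everything is the compatibility identity $g(JX,Y) = -g(X,JY)$. I would begin by establishing the isotropy of the eigenbundles, which simultaneously opens the door to the other two claims. For $X,Y \in E^{+}$ we have $JX = X$ and $JY = Y$, so $g(X,Y) = g(JX,Y) = -g(X,JY) = -g(X,Y)$, forcing $g(X,Y) = 0$; the identical computation with $JX = -X$ and $JY = -Y$ shows $E^{-}$ is isotropic as well. Thus both $E^{+}$ and $E^{-}$ are isotropic with respect to $g$.

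Next I would pin down the ranks using non-degeneracy. Writing $m = \textrm{rank}(E)$, the fact that $E^{+}$ is isotropic gives $E^{+} \subseteq (E^{+})^{\perp}$, while non-degeneracy forces $\dim (E^{+})^{\perp} = m - \dim E^{+}$, so $\dim E^{+} \leq m/2$, and likewise $\dim E^{-} \leq m/2$. But $E = E^{+} \oplus E^{-}$ gives $\dim E^{+} + \dim E^{-} = m$, so both inequalities must be equalities: $\dim E^{+} = \dim E^{-} = m/2$. Hence $m$ is even, say $m = 2n$, and the eigenbundles have equal rank $n$, which is exactly the statement that $J$ is a para-complex structure (part 1). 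Since no isotropic subspace can exceed dimension $m/2 = n$, an isotropic subspace of dimension $n$ is automatically maximal, so $E^{+}$ and $E^{-}$ are maximally isotropic (part 3).

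For the signature I would exploit the induced pairing between the two eigenbundles. Because $E^{+}$ is isotropic and $g$ is non-degenerate, for nonzero $X \in E^{+}$ there exists $Z \in E$ with $g(X,Z) \neq 0$; writing $Z = Z^{+} + Z^{-}$ and using isotropy of $E^{+}$ yields $g(X,Z^{-}) \neq 0$, so the restriction $g : E^{+} \times E^{-} \to \mathbb{R}$ is non-degenerate in the first slot, hence a perfect pairing since the dimensions already agree. Choosing a basis $e_1,\dots,e_n$ of $E^{+}$ together with the dual basis $f_1,\dots,f_n$ of $E^{-}$ satisfying $g(e_i,f_j) = \delta_{ij}$, the Gram matrix of $g$ in $\{e_i,f_j\}$ is block off-diagonal with identity blocks. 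Diagonalizing via the vectors $e_i \pm f_i$, for which $g(e_i + f_i, e_i + f_i) = 2$ and $g(e_i - f_i, e_i - f_i) = -2$, exhibits $n$ positive and $n$ negative directions, so $g$ has signature $(n,n)$ (part 2).

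I expect the only genuinely delicate point to be the dimension bound together with the perfect-pairing step, namely correctly invoking non-degeneracy to control $\dim E^{\pm}$ and to conclude that the cross pairing $E^{+}\times E^{-}$ is non-degenerate; the eigenbundle computations and the final diagonalization are routine. Everything is uniform in the base point because $g$ is non-degenerate and the compatibility identity holds on all of $M$, so the fiberwise ranks and signature are constant, and the pointwise conclusions upgrade directly to the stated bundle assertions.
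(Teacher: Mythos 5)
Your proof is correct, but it takes a genuinely different route from the paper's. The paper argues constructively: starting from a section $s_1$ with $g(s_1,s_1)\neq 0$, it shows $s_1\perp Js_1$ and iterates a Gram--Schmidt-type process to build a local orthogonal frame $\{s_1,Js_1,\dots,s_n,Js_n\}$, from which it reads off the even rank, the signature (since $g(Js_i,Js_i)=-g(s_i,s_i)$ pairs each positive direction with a negative one), and finally the isotropy of $E^{\pm}$ by expanding $g(s_i\pm Js_i,\,s_j\pm Js_j)$. You instead invert the logical order: you get isotropy of the eigenbundles in one line directly from $g(JX,Y)=-g(X,JY)$ applied to eigenvectors, then deduce $\dim E^{+}=\dim E^{-}=n$ from the standard bound $\dim W\leq \tfrac{1}{2}\operatorname{rank}(E)$ for isotropic subspaces of a non-degenerate form together with $E=E^{+}\oplus E^{-}$, and finally obtain the signature from the perfect pairing $E^{+}\times E^{-}\to\mathbb{R}$ and a hyperbolic basis $e_i\pm f_i$. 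What your approach buys is economy and robustness: isotropy is immediate rather than a consequence of a chosen frame, the equality of eigenbundle ranks (i.e.\ that $J$ is genuinely para-complex, not just a product structure) falls out of non-degeneracy rather than being built into the frame construction, and you avoid the paper's implicit reliance on being able to continue the inductive choice of non-null sections at each stage. What the paper's approach buys is an explicit adapted local frame $\{s_i,Js_i\}$, which it reuses almost verbatim in the subsequent structure-group reduction theorem. Both arguments are sound; your perfect-pairing step is correctly justified (non-degeneracy in the first slot plus equal dimensions), and your remark that the fiberwise ranks and signature are locally constant is the right way to upgrade the pointwise linear algebra to the bundle statement.
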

\begin{proof}
	Recall that $g$ is non-degenerate, so there exists $s_1\in \Gamma(E)$ such that $g(s_1,s_1)\neq 0$. We may also assume that $g(s_1,s_1)>0$, with the negative case being similar. By the definition of compatibility and the fact that $g$ is symmetric, we have that $s_1$ is orthogonal to $Js_1$ with respect to $g$. If $\textrm{rank}(E)>2$, then we let $s_2\in \Gamma(E)$ be orthogonal to both $s_1$ and $Js_1$ with respect to $g$. It follows that $Js_2$ is also orthogonal to both $s_1$ and $Js_1$, as well as $s_2$, and so by continuing this process, we get a local orthogonal basis $\mathcal{B} = \{s_1,Js_1,\cdots, s_n,Js_{n}\}$, and so $\textrm{rank}(E) = 2n$.
	\\
	
	\indent The signature of $g$ is $(n,n)$ as $g(s_i,s_i)>0$, and so $g(Js_i,Js_i)= - g(s_i,s_i)<0$. Let us denote $u_i = s_i+Js_i$ and $v_i = s_i - Js_i$. We can see that, as before, $\{u_i\}$ and $\{v_i\}$ are local bases for the eigenbundles $E^{+}$ and $E^{-}$ respectively. Finally, we simply have to show that $g$ is identically zero when restricted to either bundle. The maximality condition is satisfied as $\textrm{rank}(E^{\pm}) = n$, and $E = E^{+}\oplus E^{-}$. To this end, we see that for the basis vectors $\{u_i\}$, we have
	\begin{align*}
	g(s_i+Jsi, s_j+Js_j) &= g(s_i,s_j) + g(Js_i,Js_j) + g(Js_i,s_j) + g(s_i,Js_j)
	\\
	&= g(s_i,s_j) - g(s_i,s_j) + g(s_i,Js_j)- g(s_i,Js_j)
	\\
	&=0.
	\end{align*} 
	\indent And a similar calculation shows that $g$ also vanishes on $E^{-}$.
\end{proof}
In keeping with the previous section, the addition of a compatible pseudo-Riemannian metric induces a further reduction in the structure sheaf of the para-Hermitian bundle, consisting of the collection of transition functions over some choice of open cover of $M$. We have the well known result due to Bejan \cite{Bejan}:
\begin{Thm}
	A vector bundle $E$ over $M$ admits a para-Hermitian structure if and only if the structure group acting on its frame bundle can be reduced to the group of matrices of the form:
	\begin{align*}
	\left\{g_{\alpha\beta}(p)  = \begin{pmatrix}
	S & 0 \\
	0 & (S^t)^{-1}
	\end{pmatrix} \, \, \, \, \, : \, \, \, \, \, S\in GL_k(\mathbb{R}), \, \, p\in M\right\}.
	\end{align*}
\end{Thm}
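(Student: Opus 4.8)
The plan is to prove both implications by translating between a para-Hermitian structure $(J,g)$ and an atlas of \emph{adapted} local frames, writing $\mathrm{rank}(E)=2k$ as guaranteed by Proposition \ref{Maximally Isotropic}. The central object in both directions is a frame $\{e_1,\dots,e_k,f_1,\dots,f_k\}$ in which $\{e_i\}$ spans $E^+$, $\{f_j\}$ spans $E^-$, and $g(e_i,f_j)=\delta_{ij}$ with $g(e_i,e_j)=g(f_i,f_j)=0$.

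For the forward direction I would begin with a para-Hermitian structure $(J,g)$ and invoke Proposition \ref{Maximally Isotropic} to obtain the splitting $E=E^+\oplus E^-$ into maximally isotropic rank-$k$ eigenbundles. The key observation is that, since $E^+$ and $E^-$ are each isotropic and $g$ is non-degenerate, the restriction $g|_{E^+\times E^-}$ is a non-degenerate pairing, identifying $E^-$ with $(E^+)^*$. Working locally, I would choose a smooth frame $\{e_i\}$ of $E^+$ and take $\{f_j\}$ to be its $g$-dual frame in $E^-$, so that the normalization above holds. These adapted frames constitute an atlas for $E$.

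Next I would compute the transition functions in this atlas. Because any admissible change of frame must intertwine $J$, it preserves the eigenbundle splitting and is therefore block-diagonal, say $\mathrm{diag}(S,T)$ with $S,T\in GL_k(\mathbb{R})$. Writing $e_i'=\sum_a S_{ai}e_a$ and $f_j'=\sum_b T_{bj}f_b$ and demanding that $g(e_i',f_j')=\delta_{ij}$ be preserved forces $(S^tT)_{ij}=\delta_{ij}$, hence $T=(S^t)^{-1}$. This is exactly the stated block form, establishing the reduction of the structure group. For the converse I would instead assume the reduction and reconstruct $(J,g)$: on each frame, define $J$ to be $+Id$ on the span of the first $k$ vectors and $-Id$ on the last $k$, and define $g$ by $g(e_i,f_j)=\delta_{ij}$, $g(e_i,e_j)=g(f_i,f_j)=0$. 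The block-diagonal form makes the splitting, and hence $J$, well-defined across overlaps, while the relation $T=(S^t)^{-1}$ is precisely the condition $S^t(S^t)^{-1}=I$ that the pairing is preserved, so $g$ is globally well-defined. It then remains to verify the axioms: $J^2=Id$ with equal-rank eigenbundles is immediate, and compatibility $g(JX,Y)=-g(X,JY)$ follows by checking the four combinations of basis vectors drawn from $E^+$ and $E^-$.

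The main obstacle I expect lies in the forward direction's construction of the adapted dual frame: one must confirm that $g|_{E^+\times E^-}$ is non-degenerate and that the $g$-dual frame $\{f_j\}$ depends smoothly on the base point. Once this is in place, the transition-function computation reduces to the short linear-algebra identity $S^tT=I$, and the converse amounts to routine verification of well-definedness and of the defining identities for $J$ and $g$.
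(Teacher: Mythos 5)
Your proposal is correct and follows essentially the same route as the paper's proof: both reduce to a local frame adapted to the eigenbundle splitting $E=E^{+}\oplus E^{-}$ in which $g$ takes the standard off-diagonal form, observe that the transition functions are block-diagonal $\mathrm{diag}(S,T)$ because they intertwine $J$, and extract $T=(S^t)^{-1}$ from preservation of the pairing, with the converse handled by the same local definitions of $J$ and $g$ glued via the block form. The only (cosmetic) difference is how the adapted frame is built: the paper runs a Gram--Schmidt-type induction on $\{s_i, Js_i\}$ and then passes to the eigenbasis, whereas you take a frame of $E^{+}$ and its $g$-dual frame in $E^{-}$ using the non-degeneracy of $g|_{E^{+}\times E^{-}}$, which is a slightly more direct way to reach the same normalization.
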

\begin{proof}
	Suppose that $E$ admits a para-Hermitian structure $(J,g)$. The first thing to note is that we require a compatible pseudo-Riemannian metric, whereas in the complex case, the complex structure is enough to provide a reduction of $GL_{2n}(\mathbb{C})\hookrightarrow GL_n(\mathbb{R})$. In the complex case, it is possible to construct local linearly independent frames just using the complex structure, but the same method does not work in the para-complex case unless a compatible metric is introduced. This is a consequence of the classification of almost vector cross-product structures. With this in mind, we proceed as follows. 
	\\
	
	\indent Let $(E,\pi, M)$ be a vector bundle of rank $2n$, and let $J$ be an endomorphism of $E$ such that $J^2=Id_{E}$. Let $\varphi_{\alpha}: \pi^{-1}(U_{\alpha})\rightarrow U_\alpha\times \mathbb{R}^{2n}$, and $\varphi_\beta : \pi^{-1}(U_{\beta})\rightarrow U_{\beta}\times \mathbb{R}^{2n}$ be two local trivializations such that $U_{\alpha}\cap U_{\beta}\neq \varnothing$. Consider the action of $J$ on $\mathbb{R}^{2n}$, given by $J_{\alpha} = \varphi_{\alpha}\circ J\circ \varphi_{\alpha}^{-1}$ and $J_{\beta}= \varphi_{\beta} \circ J \circ \varphi_{\beta}^{-1}$. Furthermore, we let $\varphi_{\beta}\circ \varphi_{\alpha}^{-1}(p,v) = (p,\tau_{\alpha\beta}(p)(v))$ for $(p,v)\in U_{\alpha}\cap U_{\beta}\times \mathbb{R}^{2n}$. Recall that $g$ is non-degenerate, so there exists $s_1\in \Gamma(E)$ such that $g(s_1,s_1)\neq 0$. We may also assume that $g(s_1,s_1)>0$, with the negative case being similar. By the definition of compatibility and the fact that $g$ is symmetric, we have that $s_1$ is orthogonal to $Js_1$ with respect to $g$. If $\textrm{rank}(E)>2$, then we let $s_2\in \Gamma(E)$ be orthogonal to both $s_1$ and $Js_1$ with respect to $g$. It follows that $Js_2$ is also orthogonal to both $s_1$ and $Js_1$, as well as $s_2$, and so by continuing this process, we get a local orthogonal frame $\mathcal{B}_{\alpha} = \{s_1,J_{\alpha}s_1,\cdots, s_n,J_{\alpha}s_{n}\}$ on the image of $\varphi_{\alpha}$. Following as in the complex case, we get a reduction into matrices of the form 
	\begin{align*}
	\tau_{\alpha\beta} &= \begin{pmatrix}
	a_{11} & b_{11} & \cdots & a_{n1} & b_{n1} \\
	b_{11} & a_{11} & \cdots & b_{n1} & a_{n1} \\
	\vdots & \vdots & \ddots & \vdots & \vdots \\
	a_{1n} & b_{1n} & \cdots & a_{nn} & b_{nn} \\
	b_{1n} & a_{1n} & \cdots & b_{nn} & a_{nn}
	\end{pmatrix}
	\end{align*}
	\indent In the para-complex setting, we may use the natural $\pm1$-eigenbasis, which is given by $\mathcal{B}_{\alpha}=\{\frac{1}{\sqrt{2}}(s^1+J_{\alpha}s^1),\cdots, \frac{1}{\sqrt{2}}(s^n+J_{\alpha}s^n),\frac{1}{\sqrt{2}}(s^1-J_{\alpha}s^1),\cdots, \frac{1}{\sqrt{2}}(s^n-J_{\alpha}s^n)\}$. In this basis, the transformation takes the form
	\begin{align}
	\tau_{\alpha\beta} &= \begin{pmatrix}
	a_{11}+b_{11} & a_{21}+b_{21} & \cdots & 0 & 0 \\
	a_{12}+b_{12} & a_{22}+b_{22} & \cdots & 0 & 0 \\
	\vdots & \vdots & \ddots & \vdots &\vdots \\
	0 & 0 & \cdot & a_{(n-1)(n-1)}-b_{(n-1)(n-1)} & a_{n(n-1)}-b_{n(n-1)} \\
	0 & 0 & \cdots & a_{(n-1)n}-b_{(n-1)n}&  a_{nn}-b_{nn}
	\end{pmatrix} = \begin{pmatrix}
	S & 0 \\
	0 & V
	\end{pmatrix}
	\end{align}
	\indent Now, we make an observation: $\tau_{\alpha\beta}$ maps the $\pm 1$-eigenspace of $J_{\alpha}$ into the $\pm 1$-eigenspace of $J_{\beta}$. With this in mind, we construct a basis $\mathcal{B}_{\beta}=\{\frac{1}{\sqrt{2}}(s^1+J_{\beta}s^1),\cdots, \frac{1}{\sqrt{2}}(s^n+J_{\beta}s^n),\frac{1}{\sqrt{2}}(s^1-J_{\beta}s^1),\cdots, \frac{1}{\sqrt{2}}(s^n-J_{\beta}s^n)\}$. Using this basis, we remark that
	\begin{align*}
	J_{\beta} =\begin{pmatrix}
	I_n & 0 \\
	0 & -I_n
	\end{pmatrix}, \, \, \, \, \, \, \, \, \, \, \, g_{\beta} = \begin{pmatrix}
	0 & I_n \\
	I_n & 0
	\end{pmatrix}.
	\end{align*}
	Knowing that $\tau_{\alpha\beta}$ vanishes on complimentary eigenspaces allows us to keep the form that we arrived at in equation (7.2.1). We also note that by definition $g_{\alpha} = \tau_{\alpha\beta}^{t} \circ g_{\beta}\circ \tau_{\beta\alpha}$, and so
	\begin{align*}
	\begin{pmatrix}
	0 & I_n \\
	I_n & 0 
	\end{pmatrix} &= \begin{pmatrix}
	S^t & 0 \\
	0 & V^t
	\end{pmatrix} \begin{pmatrix}
	0 & I_n \\
	I_n & 0 
	\end{pmatrix} \begin{pmatrix}
	S & 0\\
	0 & V
	\end{pmatrix}
	\\
	\begin{pmatrix}
	0 & I_n \\
	I_n & 0 
	\end{pmatrix} &= \begin{pmatrix}
	0 & S^t V\\
	V^t S & 0 
	\end{pmatrix}.
	\end{align*} 
	And thus we have $S^tV = 0$, or $V = (S^t)^{-1}$. Therefore, the structure group has a reduction into matrices of the form 
	\begin{align*}
	\tau_{\alpha\beta}(p)  = \begin{pmatrix}
	S(p) & 0 \\
	0 & (S^t)^{-1}(p)
	\end{pmatrix}, \, \, \, \, \, \, \, \, \, \, S(p)\in GL_k(\mathbb{R}),  \, \, p\in M.
	\end{align*}
	\indent Conversely, suppose that $\tau_{\alpha \beta}$ has the desired form. Simply picking
	\begin{align*}
	g\vert_{U_{\alpha}}= g_{\alpha} = \begin{pmatrix}
	0 & I_n\\
	I_n & 0 
	\end{pmatrix}
	\end{align*}
	and then extending $g_{\alpha}$ to all of $E$ as  
	\begin{align*}
	g_{\beta} = \tau_{\alpha\beta}^t \circ g_{\alpha} \circ \tau_{\alpha\beta}  = \begin{pmatrix}
	0 & I_n\\
	I_n & 0 
	\end{pmatrix}
	\end{align*}
	gives a well defined metric. As well, we get two sub-bundles of $E$ as the invariant spaces of $\tau_{\alpha \beta}$. Simply defining $J$ to be the identity on one of them and minus the identity on the other gives a para-complex structure, and one can see that on the overlap,
	\begin{align*}
	J_{\beta} &= \tau_{\alpha\beta}^{-1}\circ J_{\alpha}\circ \tau_{\alpha\beta}
	\\
	&=\begin{pmatrix}
	S^{-1} & 0 \\
	0 & S^{t}
	\end{pmatrix}\begin{pmatrix}
	I_n & 0 \\
	0 & -I_n
	\end{pmatrix} \begin{pmatrix}
	S & 0 \\
	0 & (S^t)^{-1}
	\end{pmatrix}
	\\
	&= \begin{pmatrix}
	I_n & 0 \\
	0 & -I_n
	\end{pmatrix}.
	\end{align*}
	Thus, from the reduction of the structure group, we obtain a para-Hermitian structure. 
\end{proof}
In this case, if we assume that $E$ is also para-holomorphic, then it is quite easy to see that the transition functions have to be constant. This is because $\partial_{-}(S)_{ij}=0$, and so $S$ can only be a function of the $z^{+}$ variables. But then we could say that the transition functions $(S^T)^{-1}$ can only be functions of the $z_{-}$ variables. These matrices are inverse transposes of each other, and so one can see that they must be constant. It is quite easy to show that when $g$ is compatible with the almost para-complex structure, $F\in \Omega^{(1,1)}M$. If the para-complex structure is integrable and $F$ is closed, then we know by the local $\partial_+ \partial_+$-lemma that locally $F=\partial_+ \partial_-u$ for some real function $u$, called the \textit{local para-K\"{a}hler potential}. 
\begin{Def}
Let $(M,J,g,F)$ be an almost para-Hermitian manifold. We say that this structure is para-K\"{a}hler and refer to $g$ as the para-K\"{a}hler metric if and only if $J$ is integrable and $dF=0$. In summary:
\begin{align}
    g\textrm{ is para-K\"{a}hler}\Leftrightarrow \begin{cases}
    N^J&=0\\
    dF&=0
    \end{cases}.
\end{align}
\end{Def}
The utility of this definition is that since the associated $2$-form is closed, the para-K\"{a}hler metric $g$ can be written locally in term of the local para-K\"{a}hler potential, giving
\begin{align*}
    g_{i\overline{j}} &= \frac{\partial^2 u}{\partial z^i_+ \partial z^j_-} .
\end{align*}
Many interesting examples of para-complex, para-Hermitian, and para-K\"{a}hler manifolds exist. For an overview of some historical examples as well as a survey of the development of the field of para-complex geometry, see \cite{Cruceanu}, in particular section $3$. We will now present some illustrative and interesting examples.  
\begin{Example}\label{Connection giving para-Hermitian}
The natural almost para-K\"{a}hler structure on $T^{*}M$:
\end{Example}
This example is due Bejan in \cite{Bejan2}, and will be informative to have in mind when we talk about para-complex connections in section 4. Let $M$ be a smooth manifold and let $\nabla$ be a torsion free connection on $M$. The connection $\nabla$ induces a decomposition $T_\xi(T^{*}M) = H_\xi(T^{*}M)\oplus V_\xi(T^{*}M)$, where $V_\xi(T^{*}M)$ is the vertical bundle with respect to the projection $\pi : T^{*}M\rightarrow M$, and $H_{\xi}(T^{*}M)$ is the horizontal bundle induced by the connection. In local coordinates $\{x^i,p^i\}$ for $T^{*}M$, we have that
\begin{align*}
    V_{(x,p)}(T^{*}M) &= \left\{ v^i \frac{\partial}{\partial p^i} \vert v^i\in C^{\infty}(T^{*}M) \right\} ,
    \\
    H_{(x,p)}(T^{*}M) &= \left\{ v^i \frac{\partial}{\partial x^i} + w^i \frac{\partial }{\partial p^i} \, \,  \vert \, \,v^i,w^i\in C^{\infty}(T^{*}M),\textrm{ and }  w^i + \Gamma^{i}_{jk}(x)v^jp^{k} = 0\right\},
\end{align*}
where $\Gamma_{jk}^{i}$ are the Christoffel symbols of the connection $\nabla$. Under this decomposition, we can define an almost para-complex structure $J$ with $+1$-eigenbundle equal to the horizontal bundle, and $-1$-eigenbundle equal to the vertical bundle. The natural para-Hermitian metric in the coordinates $\{x^i,p^i\}$ is then given by
\begin{align*}
    g &= \begin{pmatrix}
    -2p^k\Gamma^{i}_{jk}(x) & \delta^i_j\\
    \delta_j^i & 0 
    \end{pmatrix}.
\end{align*}
It is easy to see that both $H(T^{*}M)$ and $V(T^{*}M)$ are maximally isotropic with respect to this metric. Further, Bejan gives us the following Theorem:
\begin{Thm}
With the above notation, $(M,g,J)$ is an almost para-K\"{a}hler structure on the total space of the cotangent bundle $T^{*}M$, whose fundamental 2-form $F$ satisfies $F = d\theta$ (where $\theta$ denotes the Liouville form), and thus coincides with the canonical symplectic structure on $T^{*}M$. Moreover if $\nabla$ has vanishing curvature, then the structure $(M,g,J)$ is para-K\"{a}hler.
\end{Thm}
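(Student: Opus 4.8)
The plan is to verify the three claims in sequence: that $(M,g,J)$ is almost para-Hermitian, that its fundamental form is $F=d\theta$ (which at once yields $dF=0$ and the identification with the canonical symplectic form), and finally that flatness of $\nabla$ promotes the structure to para-K\"ahler.

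First I would settle the almost para-Hermitian claim, which is essentially structural. By construction $T(T^{*}M)=H(T^{*}M)\oplus V(T^{*}M)$ with $J=+\mathrm{Id}$ on the horizontal bundle and $J=-\mathrm{Id}$ on the vertical bundle, so $J^{2}=\mathrm{Id}$ and the two eigenbundles have equal rank $n$; thus $J$ is an almost para-complex structure. For compatibility I would use that $g(JX,Y)=-g(X,JY)$ holds precisely when both eigenbundles are isotropic, a fact already recorded for $H(T^{*}M)$ and $V(T^{*}M)$. Hence $g$ is para-Hermitian and $F(X,Y)=g(X,JY)$ is genuinely alternating, since $g(X,JY)=-g(JX,Y)=-g(Y,JX)=-F(Y,X)$.

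The computational core is the identity $F=d\theta$. In the coordinates $\{x^{i},p^{i}\}$ the horizontal lift of $\partial/\partial x^{i}$ is $e_{i}=\partial/\partial x^{i}-\Gamma^{m}_{ik}p^{k}\,\partial/\partial p^{m}$, and decomposing an arbitrary vector into its horizontal and vertical parts gives $J$ the block form $J=\begin{pmatrix}I & 0\\ -2\Gamma & -I\end{pmatrix}$ in the frame $(\partial/\partial x^{i},\partial/\partial p^{i})$, where $\Gamma$ denotes the matrix with entries $\Gamma^{m}_{ik}p^{k}$. Substituting this together with the given matrix of $g$ into $F(X,Y)=g(X,JY)$, the Christoffel block of $g$ cancels exactly against the Christoffel term produced by $J$, leaving $F(X,Y)=b^{i}c^{i}-d^{i}a^{i}$ for $X=a^{i}\partial/\partial x^{i}+b^{i}\partial/\partial p^{i}$ and $Y=c^{i}\partial/\partial x^{i}+d^{i}\partial/\partial p^{i}$. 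This is exactly $(dp^{i}\wedge dx^{i})(X,Y)=d\theta(X,Y)$. Consequently $dF=d^{2}\theta=0$, so $(M,g,J)$ is almost para-K\"ahler, and $F$ is the canonical symplectic form. I expect this cancellation to be the main obstacle: it is pure sign- and index-bookkeeping, and one must keep careful track of where the Christoffel symbols sit in $J$ and in $g$ and invoke the torsion-free symmetry $\Gamma^{m}_{ik}=\Gamma^{m}_{ki}$ to see that the symmetric remainder drops out, leaving precisely the alternating pairing $d\theta$.

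It remains to treat integrability. Since $dF=0$ has been established, the para-K\"ahler condition $N^{J}=0$, $dF=0$ reduces, via the equivalence $N^{J}=0\Leftrightarrow$ integrability of both eigenbundles (Theorem \ref{Classical Decomposition of Forms}), to the integrability of $H(T^{*}M)$ and $V(T^{*}M)$. The vertical bundle is always integrable, being the kernel of $d\pi$ and locally spanned by the commuting fields $\partial/\partial p^{i}$. For the horizontal bundle I would compute the bracket of the horizontal lifts and observe that $[e_{i},e_{j}]$ is purely vertical with coefficient the curvature tensor of $\nabla$ contracted with $p$; in particular $[e_{i},e_{j}]\in H(T^{*}M)$ forces $[e_{i},e_{j}]=0$, so $H(T^{*}M)$ is involutive if and only if $\nabla$ is flat. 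Thus when $\nabla$ has vanishing curvature both eigenbundles are integrable, $N^{J}=0$, and together with $dF=0$ this makes $(M,g,J)$ para-K\"ahler.
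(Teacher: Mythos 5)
The paper states this theorem without proof, citing Bejan \cite{Bejan2}, so there is no in-paper argument to compare against; your proposal supplies the standard proof, and it is correct. The three-step structure is right: compatibility of $g$ with $J$ is equivalent to isotropy of the two eigenbundles, which the paper has already recorded for $H(T^{*}M)$ and $V(T^{*}M)$; the identity $F=d\theta$ follows from the block computation you describe, where the Christoffel block of $g$ cancels against the Christoffel term in $J$ precisely because $\Gamma^{m}_{ik}p_{m}$ is symmetric in $i,k$ (torsion-freeness), leaving the alternating pairing $b^{i}c^{i}-a^{i}d^{i}=(dp_{i}\wedge dx^{i})(X,Y)$; and the integrability reduction via $N^{J}=0\Leftrightarrow$ involutivity of both eigenbundles is exactly what is needed, with $V(T^{*}M)$ always involutive and $[e_{i},e_{j}]=R^{l}{}_{mij}\,p_{l}\,\partial/\partial p_{m}$ purely vertical, so that $H(T^{*}M)$ is involutive iff $\nabla$ is flat. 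One caveat worth flagging: with the signs exactly as printed in the paper (horizontal condition $w^{i}+\Gamma^{i}_{jk}v^{j}p^{k}=0$ together with upper-left metric block $-2p^{k}\Gamma^{i}_{jk}$), the two Christoffel contributions in $g(X,JY)$ actually add rather than cancel; the cancellation requires flipping one of the two sign conventions (e.g.\ taking the horizontal lift $e_{j}=\partial/\partial x^{j}+\Gamma^{k}_{ji}p_{k}\,\partial/\partial p_{i}$, which is the standard convention for the connection-induced splitting of $T(T^{*}M)$). This is an inconsistency inherited from the paper's own notation rather than a gap in your argument, but in a written-out version you should fix one consistent convention and verify the cancellation against it.
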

\begin{Example}
The Pseudosphere $S^{(3,3)}$.
\end{Example}
\indent In complex geometry, there is the famous question of whether the sphere $S^6$ admits an integrable complex structure. There are many known almost complex structures, but none so far discovered are integrable. In this vein of exploration, Libermann in \cite{Libermann} constructs an almost para-Hermitian structure on the pseudosphere $S^{(3,3)}$ that is not integrable, where $S^{(3,3)}$ is the submanifold of $\mathbb{R}^7$ determined by the polynomial
\begin{align*}
    -x_1^2 -x_2^2 -x_3^2 + x_4^2 +x_5^2 + x_6^2 + x_7^2= 1.
\end{align*}
In \cite{Smolentsev}, Smolentsev reconstructs this almost para-Hermitian structure by considering the quotient $G_2^{*}/SL(3;\mathbb{R})$, where $G_2^{*}$ is the non-compact real form of the exceptional Lie group $G_2$, and shows that its Nijenhuis tensor does not vanish. Interestingly, in section 5.1 of \cite{Smolentsev}, Smolentsev finds an integrable para-complex structure on $S^{(3,3)}$ by showing that $S^{(3,3)}$ is diffeomorphic to the cylinder $D^3\times S^3$, and so the question of whether integrable para-complex structures exist on $S^{(3,3)}$ is closed.
\pagebreak
\section{ \normalsize Lie Algebroids, Courant Algebroids and Lie-Bialgebroids}
We now move towards a discussion of algebroids. Roughly, algebroids allow us to combine the generality of para-Hermitian vector bundles with the integrability results that only appear in the context of the tangent bundle. By endowing the vector bundle with a bracket that has certain properties, we can ask questions about integrability of the eigenbundles that we would be unable to do otherwise. The two main objects of study here are Lie algebroids and Courant algebroids. A Lie algebroid is the most straightforward generalization of the tangent bundle, and a Courant algebroid is a generalization of the bundle $TM\oplus T^{*}M$, as we discussed in the introduction. 
\\

\indent The objects that we intend to retain are the non-degenerate symmetric bracket $\langle \cdot ,\cdot \rangle$ on the vector bundle $E\rightarrow M$ and the almost para-complex structure $J$. Recall that the eigenbundles of $J$ are maximally isotropic with respect to $\langle \cdot ,\cdot \rangle$. The fundamental take-away from the concept of a para-Hermitian manifold is that when the subbundles $T^{\pm}M$ are integrable, they form subalgebroids of the Lie algebroid $TM$. Therefore, the natural object to consider is the Lie bialgebroid $(E_{+},E_{-})$ consisting of dual pairs of Lie algebroids. As we will see, Lie bialgebroids are related to Courant algebroids in precisely the way that makes the Courant algebroid viable for a generalization of para-Hermitian geometry, which we then discuss in the next section. 
\subsection{ \normalsize Lie Algebroids}
\indent In the context of vector bundles, we lack a concept of integrability for the eigenbundles of a para-complex structure. Further, another weakness is that there is no accompanying derivative operator to decompose that acts on functions to produce sections of the dual bundle (as opposed to sections of $\Omega^{\cdot}(M)\otimes E$). It's clear that the two objects that are missing are an anchor map $\rho : E\rightarrow TM$, which will allow us to pull back the exterior derivative on $M$, and a bracket on the space of sections of $E$. For this reason, we introduce the notion of Lie algebroid using the formalism in \cite{Roytenberg}.
\begin{Def}
A Lie algebroid is a vector bundle $A\rightarrow M$ together with an anti-symmetric bracket $[\cdot,\cdot]_A$ on the space of sections $\Gamma(A)$ and a bundle map $a:A\rightarrow TM$ called the anchor. The bracket and the anchor satisfy the following conditions:
\begin{enumerate}
    \item For all $\sigma ,\tau \in \Gamma(A)$, $a([\sigma,\tau]_{A}) = [a(\sigma),a(\tau)]_{TM}$.
    \item For all $\sigma,\tau\in \Gamma(A)$ and $f\in C^{\infty}(M)$, $[\sigma,  f \tau]_{A} = f[\sigma,\tau]_{A} + (a(\sigma)f)\tau $.
\end{enumerate}
\end{Def}
\indent One can use this bracket and anchor to produce a derivation of degree $1$, $d_A$, on the space of sections $\Gamma(\extp^{*}A^{*})$ by using an identical form of the Cartan formula. For any $\omega \in \Gamma(\extp^{p}A^{*})$, we can define $d_A\omega \in \Gamma(\extp^{p+1} A^{*})$ by 
\begin{align}\label{Cartan Formula}
    d_A\omega(\sigma_0,\cdots, \sigma_p) &= \sum_{i=0}^{p} (-1)^{i}(a(\sigma_i))\omega(\sigma_1,\cdots ,\hat{\sigma}_i , \cdots , \sigma_p ) 
    \\
    \nonumber & \, \, \, \, \, + \sum_{0\leq i < j \leq p}(-1)^{i+j}\omega([\sigma_i,\sigma_j]_{A},\sigma_0,\cdots, \hat{\sigma}_i,\cdots , \hat{\sigma}_j,\cdots , \sigma_p)
\end{align}
Note that $d_A^2=0$. Moreover, the Lie bracket on $A$ is recoverable from $d_A$ since for any $\omega \in \Gamma(\extp^1A^{*})$,
\begin{align*}
    \omega([\sigma , \tau]_A) = a(\sigma)\omega(\tau) - a(\tau)\omega(\sigma) - d_A\omega (\sigma, \tau).
\end{align*}
All of the structural identities of the Lie bracket are induced by the fact that $d^2_A = 0$ and can be discovered by considering $d^2_A g(\sigma , f \tau)$ for arbitrary $f,g\in C^{\infty}(M)$. Further, it is possible to extend the Lie bracket to multi-vector fields (sections of $\extp^{*}A$) via the following rules. For $\sigma \in \Gamma(\extp^{p}A)$, we let the degree of $\sigma$ be $|\sigma| = p $.  
\begin{enumerate}
    \item $[\sigma,\tau]_{A} = -(-1)^{(|\sigma|-1)(|\tau|-1)}[\tau,\sigma]$.
    \item For $\sigma \in \Gamma(A)$ and $f\in C^{\infty}(M)$, $[\sigma,f] = a(\sigma)f$.
    \item $[\sigma,\cdot]_A$ is a derivation of degree $|\sigma|-1$, meaning 
    \begin{align*}
    [\sigma , \tau\wedge  \xi]_A = [\sigma , \tau]_A\wedge \xi +(-1)^{(|\sigma|-1)(|\tau|)} \tau \wedge [\sigma,\xi]_A, \, \, \, \, \, \, \, \, \, \, |[\sigma,\tau]| = |\sigma| + |\tau|-1.
    \end{align*}
\end{enumerate}
\indent The derivation $d_{A}$ and interior derivative $\iota$, defined by contracting sections of $\extp A^*$ with sections of $A$, can be used to define the Lie derivative operator $\mathcal{L}^{A}_X = [d_A , \iota_X]$. All of the usual commutation relations satisfied by these operators continue to hold in this setting.
\begin{Example}
The tangent bundle, $TM$, to a smooth manifold $M$.
\end{Example}
As we remarked in the introduction to this section, the simplest Lie algebroid is the tangent bundle $TM$ with the Lie bracket as the bracket on the space of sections and the anchor given by the identity map. 
\begin{Example}
The action Lie algebroid $\mathfrak{g}\times M \rightarrow M$.
\end{Example}
\indent A Lie algebra action of $\mathfrak{g}$ on $M$ is a map $\sigma :\mathfrak{g}\rightarrow \mathfrak{X}(M)$ such that $\sigma ([X,Y]_{\mathfrak{g}}) = [\sigma(X),\sigma(Y)]_{TM}$ for $X,Y\in \mathfrak{g}$. If one considers the trivial bundle $\mathfrak{g}\times M$, we have an anchor given by $\rho (X,p) = \sigma(X)\vert_p $ and can extend the bracket on $\mathfrak{g}$ to $\mathfrak{g}\times M$ by derivations over smooth functions on $M$ to obtain a Lie algebroid called the \textit{action Lie algebroid} of $\mathfrak{g}$ on $M$. Given an element of the dual Lie algebra $\xi \in \mathfrak{g}^{*}$, one can compute the derivative of $\xi$ with respect to the induced derivation $d_\mathfrak{g}$ as 
\begin{align*}
d_{\mathfrak{g}}\xi (X,Y) &= \sigma(X)\xi(Y) - \sigma(Y)\xi(X)- \xi ([X,Y]_{\mathfrak{g}}),
\end{align*}
which when $X$ and $Y$ are constant sections, meaning elements of $\mathfrak{g}$, reduces to $d_\mathfrak{g}\xi (X,Y) = -\xi([X,Y]_{\mathfrak{g}})$, which is the standard Poisson structure on $\mathfrak{g}^{*}$ \cite{Symplectic Geometry}.
\subsection{ \normalsize Courant Algebroids}
\indent Now, the Lie algebroid structure may be sufficient to define a para-complex structure by simply requiring the eigenbundles to be closed under the bracket structure, but this definition does not extend easily to the para-Hermitian setting. The more natural objects in this setting are the Courant algebroid and Lie bialgebroid, which we also draw from \cite{Roytenberg}.
\begin{Def}
Consider the vector bundle $E\rightarrow M$ equipped with a symmetric bilinear form $\langle \cdot ,\cdot \rangle$, a skew-symmetric bracket $[\cdot,\cdot]_E : \extp^2E \rightarrow E$, and an anchor map $\rho : E\rightarrow TM$. We define $d_E:C^{\infty}(M)\rightarrow \Gamma(E^{*})$ by $d_E= \rho^* d$, meaning for a section $e\in \Gamma(E)$ and a smooth function $f\in C^{\infty}(M)$, $d_E f( e) = \rho(e)f$, and the map $T$ given by $T(e_1,e_2,e_3) = \frac{1}{6}\langle [e_1,e_2],e_3\rangle + c.p.$. Then $E$ is a Courant algebroid if the following conditions are satisfied. For all $e,e_1,e_2,e_3,h_1,h_2\in \Gamma(E)$ and $f\in C^{\infty}(M)$,
\begin{enumerate}
\item  $J(e_1,e_2,e_3) = \beta^{-1} d_ET (e_1,e_2,e_3)$,
\item $\rho([e_1,e_2]) = [\rho(e_1),\rho(e_2)]$,
\item $[e_1,fe_2] = f[e_1,e_2] + (\rho(e_1)f)e_2 - \frac{1}{2}\langle e_1,e_2 \rangle \beta^{-1}d_Ef$,
\item $\rho \circ \beta^{-1}\circ d_E = 0 $,
\item$\rho(e)\langle h_1,h_2\rangle = \langle [e,h_1],h_2\rangle + \langle h_1 , [e,h_2]\rangle + \frac{1}{2}d_E\langle e,h_1\rangle (h_2) + \frac{1}{2}d_E\langle e,h_2\rangle (h_1)$,
\end{enumerate}
where $J(e_1,e_2,e_3) = [[e_1,e_2],e_3] + [[e_2,e_3],e_1] + [[e_3,e_1],e_2]$ and $\beta: E\rightarrow E^{*}$ is the non-degenerate bundle isomorphism induced by $\langle\cdot,\cdot\rangle$.
\end{Def}
A special kind of Courant algebroid that will be useful for consideration later are exact Courant algebroids. An exact Courant algebroid is a Courant algebroid that admits a fiber-wise decomposition $E_x = T_xM\oplus T_x^{*}M$, $x\in M$. This is equivalent to the following sequence being exact:
\begin{center}
    \begin{tikzcd}
    0 \arrow[r,""]& T^{*}M\arrow[r,"\rho^{*}"]&E\arrow[r,"\rho"]& TM\arrow[r,""]&0,
    \end{tikzcd}
\end{center}
where in this case, $\rho^{*}$ is the dual map to $\rho$ under the pairing $\langle \cdot ,\cdot\rangle$, meaning $\langle \rho^{*}\xi , e \rangle = \xi (\rho(e))$. Exactness allows us to say that $\ker(\rho) = Im(\rho^{*})$ and $\ker(\rho^{*}) = 0 $. 
\\

\indent Given an exact Courant algebroid $E$, one can deform the Courant bracket on $E$ by any closed $3$-form in the following way.
\begin{Def}
Let $E\rightarrow M$ be an exact Courant algebroid and let $\eta \in \Omega^{3}(M)$. We define the $\eta$-twist of the Courant bracket $[\cdot,\cdot]$ by
\begin{align}
    [e_1,e_2]_\eta &= [e_1,e_2] + \iota_{e_1}\iota_{e_2}\rho^{*}\eta.
\end{align}
Then, $[\cdot,\cdot]_\eta$ is a Courant bracket precisely when $\eta$ is closed \cite{Roytenberg}.
\end{Def}
\indent In the definition of the Courant algebroid, we have introduced a derivative operator $d_E$ that will play a special role. Note that $d_E$ can be extended to all of $\extp^{*}E^{*}$ via the Cartan formula, as in equation \eqref{Cartan Formula}. There remains the question of what morphisms between Courant algebroids should look like. At the bare minimum, we should ask that a map $\Psi:E\rightarrow F$ between Courant algebroids preserves the anchor and the symmetric bilinear form (meaning that $\Psi$ should be an isometry). The way that $\Psi$ should interact with the bracket is informed by the following lemma. 
\begin{Lem}
Let $E,F$ be Courant algebroids over $M$ and $\Psi:E\rightarrow F$ be a vector bundle map that preserves the metric and anchor ($\rho_F \circ \Psi = \rho_E$). Then $\Psi\circ d_E = d_F$ if and only if $\Psi$ preserves the Courant brackets up to an element of $\rho^{*}_E(\Omega^2(M))\otimes \rho_F^*(\Omega^1(M))$.
\end{Lem}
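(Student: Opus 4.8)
The plan is to reduce both implications to a single Leibniz-type computation and then read them off. First I would use the metric isomorphisms $\beta_E,\beta_F$ to regard the differentials as the $E$- and $F$-valued gradients $D_E:=\beta_E^{-1}d_E$ and $D_F:=\beta_F^{-1}d_F$ that actually occur in condition (3); under the identification $E^{*}\cong E$, $F^{*}\cong F$ furnished by the metrics, the hypothesis ``$\Psi\circ d_E=d_F$'' is literally the statement $\Psi\circ D_E=D_F$, so I work with the latter throughout. I would also fix notation for the bracket defect $B(e_1,e_2):=\Psi[e_1,e_2]_E-[\Psi e_1,\Psi e_2]_F\in\Gamma(F)$, which is skew-symmetric since both brackets are.

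The key step is to apply condition (3) to $[e_1,fe_2]_E$ and to $[\Psi e_1,f\Psi e_2]_F$ for $f\in C^{\infty}(M)$ and subtract. Using $\rho_F\circ\Psi=\rho_E$ and $\langle\Psi\cdot,\Psi\cdot\rangle_F=\langle\cdot,\cdot\rangle_E$, the anchor terms $(\rho_E(e_1)f)\Psi e_2$ cancel and one is left with
\[
B(e_1,fe_2)=f\,B(e_1,e_2)+\tfrac{1}{2}\langle e_1,e_2\rangle_E\big(D_F f-\Psi D_E f\big).
\]
This single identity contains both directions. For ($\Leftarrow$), if $B$ lies in $\rho_E^{*}(\Omega^2(M))\otimes\rho_F^{*}(\Omega^1(M))$ then in particular $B$ is $C^{\infty}(M)$-linear in its second slot, so the correction term vanishes identically; choosing at each point sections $e_1,e_2$ with $\langle e_1,e_2\rangle_E\neq 0$ (possible since $\langle\cdot,\cdot\rangle$ is non-degenerate) forces $\Psi D_E f=D_F f$, i.e. $\Psi\circ d_E=d_F$. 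For ($\Rightarrow$), the hypothesis $\Psi D_E=D_F$ kills the correction term, so $B$ is $C^{\infty}(M)$-linear in the second argument, and together with skew-symmetry this makes $B$ tensorial, $B\in\Gamma(\Lambda^2E^{*}\otimes F)$.

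It then remains to show this tensor lands in the claimed subspace, i.e. that $C(e_1,e_2,e_3):=\langle B(e_1,e_2),\Psi e_3\rangle_F$ factors through the anchors. I would first symmetrize: rewriting condition (5) as a formula for $\langle[e,h_1],h_2\rangle+\langle[e,h_2],h_1\rangle$ shows that the part of $\langle[e_1,e_2],e_3\rangle$ symmetric in $(e_2,e_3)$ depends only on $\rho$ and $\langle\cdot,\cdot\rangle$; since $\Psi$ preserves both, these symmetric parts cancel in $C$, so $C$ is skew in $(e_2,e_3)$ as well as in $(e_1,e_2)$, hence totally antisymmetric. Next I would evaluate $C$ with a gradient in the third slot: using $\Psi D_E f=D_F f$, condition (2) in the form $\rho[e_1,e_2]=[\rho e_1,\rho e_2]$, and $\langle D_E f,e\rangle_E=\rho_E(e)f$, both terms of $C(e_1,e_2,D_E f)$ equal $[\rho_E e_1,\rho_E e_2]f$ and cancel. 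Since condition (4) gives $D_E f\in\ker\rho_E$ and, in the exact setting of interest, $\ker\rho_E=\beta_E^{-1}(\operatorname{im}\rho_E^{*})$ is generated by the gradients $D_E f$, total antisymmetry upgrades this to $C(e_1,e_2,e_3)=0$ whenever some $e_i\in\ker\rho_E$; equivalently $C=\rho_E^{*}\Theta$ for a $3$-form $\Theta$, which is precisely the assertion that $B\in\rho_E^{*}(\Omega^2(M))\otimes\rho_F^{*}(\Omega^1(M))$.

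The main obstacle is this final paragraph. Establishing tensoriality is routine, but pinning down the exact target space requires the metric-compatibility axiom (5) to force total antisymmetry of $C$, and then the identification $\ker\rho_E=\beta_E^{-1}(\operatorname{im}\rho_E^{*})$ to promote ``vanishing on gradients'' to ``factoring through $\rho_E$''. This spanning of $\ker\rho_E$ by gradients holds automatically for exact Courant algebroids but would need a separate comment for a general Courant algebroid.
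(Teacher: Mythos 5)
Your proof is correct, and its first half --- the Leibniz computation of $B(e_1,fe_2)$ and the observation that the correction term $\tfrac{1}{2}\langle e_1,e_2\rangle\left(D_Ff-\Psi D_Ef\right)$ vanishes precisely when $\Psi\circ d_E=d_F$, with non-degeneracy of the pairing giving the converse --- is exactly the paper's argument. Where you genuinely diverge is in locating the tensor $B$ inside $\rho_E^{*}(\Omega^2(M))\otimes\rho_F^{*}(\Omega^1(M))$. The paper checks two facts: $\rho_F(B(e_1,e_2))=0$, so that $B$ takes values in the image of $\rho_F^{*}$ by exactness of $F$, and $B(\rho_E^{*}\xi,\rho_E^{*}\zeta)=0$ via the dualized identity $\rho_F^{*}=\Psi\circ\rho_E^{*}$. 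You instead pass to the scalar $C(e_1,e_2,e_3)=\langle B(e_1,e_2),\Psi e_3\rangle_F$, use axiom (5) to show $C$ is totally antisymmetric, kill it on gradients $D_Ef$ in the third slot, and use that gradients $C^{\infty}(M)$-span $\ker\rho_E$ to conclude $C=\rho_E^{*}\Theta$ for some $\Theta\in\Omega^3(M)$. Your route is longer but actually proves more: the paper only verifies that $B$ vanishes when \emph{both} arguments lie in the image of $\rho_E^{*}$, which does not by itself imply that the $\Omega^2(M)$-factor descends through $\rho_E$ --- for that one needs vanishing when a \emph{single} argument is in $\ker\rho_E$, and that is exactly what your total-antisymmetry argument delivers. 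Two small caveats: recovering $B(e_1,e_2)=\rho_F^{*}\left(\iota_{\rho_E e_2}\iota_{\rho_E e_1}\Theta\right)$ from $C$ requires $\Psi$ to be surjective (automatic for an isometry of exact Courant algebroids of equal rank over $M$, or one can simply quote $\rho_F\circ B=0$ as the paper does); and, as you rightly flag, both your proof and the paper's use exactness of $E$ and $F$, which is absent from the statement of the lemma but implicit in the surrounding discussion.
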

\begin{proof}
\indent All that we have to show is that if $\Psi \circ d_E = d_F$, then $\Psi([e_1,e_2]_E)- [\Psi(e_1),\Psi(e_2)]_F$ is tensoral in both arguments. Since the brackets are anti-symmetric, we need only show this for one entry. For $e_1,e_2\in \Gamma(E)$, $f\in C^{\infty}(M)$, we have
\begin{align*}
\Psi([e_1,fe_2]_E) - [\Psi(e_1),\Psi(fe_2)]_F &= \Psi\left( f[e_1,e_2]_E + \rho_E(e_1)f e_2 -\frac{1}{2} \langle e_1,e_2\rangle d_E f  \right)
\\
& \, \, \, \, \, \, \, \, \, \,  -\left( f[\Psi(e_1),\Psi(e_2)]_F +\rho_F(\Psi(e_1))f e_2 - \frac{1}{2}\langle \Psi(e_1),\Psi(e_2)\rangle d_F f \right)
\\
&= f\left( \Psi([e_1,e_2]_E) - [\Psi(e_1),\Psi(e_2)]_F \right)
\\
& \, \, \, \, \, \, \, \, \, \, \,  + \frac{1}{2}\langle \Psi(e_1),\Psi(e_2)\rangle (\Psi(d_Ef) - d_Ff).
\end{align*}
And so we can see that $\phi= \Psi([e_1,e_2]_E)- [\Psi(e_1),\Psi(e_2)]_F$ is tensoral if and only if $\Psi\circ d_E = d_F$. By the fact that $\Psi$ preserves the anchor, we find that $\rho_{F}(\phi(e_1,e_2)) = 0$, and so by exactness $\phi(e_1,e_2)= \rho_F^{*}(\omega_{e_1,e_2})$, for $\omega \in \Omega^{1}(M)$. Finally, since $E$ and $F$ are exact, we can dualize the expression $\rho_E = \rho_F \circ \Psi$ to obtain $\rho_F^{*} = \Psi \circ \rho_E^{*}$, and hence $\phi(\rho_E^{*}(\xi), \rho^{*}_{E}(\zeta)) = 0$ for all $\eta,\zeta\in \Omega^{1}(M)$, meaning $\phi \in \rho_E^{*}(\Omega^{2}(M))\otimes \rho_F^{*}(\Omega^1(M))$, as desired. 
\end{proof}
\noindent Based on this understanding of the way that isometric vector bundle morphisms that preserve the anchor interact with the bracket, we introduce the following definition.
\begin{Def}
Let $E,F\rightarrow M$ be two exact Courant algebroids and let $\Psi:E\rightarrow F$ be an isometry that preserves the anchors of $E$ and $F$. Then we call $\Psi$ an \textit{almost Courant algebroid morphism} if $\Psi\circ d_E=d_F$. We call $\Psi$ \textit{Courant algebroid morphism} if it preserves the bracket.
\end{Def}
If $\Psi:E\rightarrow E$ is an almost Courant algebroid automorphism, then $\Psi([\cdot,\cdot])$ is the Courant bracket on $E$ up to a $\phi$-twist by a closed form $\phi \in \Omega^3(M)$. We mention these maps in lieu of Example \ref{Para-Hermitian Lie Groups}, wherein it makes more sense to represent the double tangent bundle to a Lie group $\mathbb{T}G$ as $G\times (\mathfrak{g}\oplus \overline{\mathfrak{g}})$, but a Courant algebroid morphism is needed to make this identification in a way that allows us to extract useful information about the natural Courant algebroid on $\mathbb{T}G$. The goal now is to examine the situation where $E$ admits a decomposition into dual subbundles for which the restriction of the Courant bracket corresponds to a Lie bracket on the subbundles. 
\subsection{ \normalsize Lie Bialgebroids}
\indent There are specific subbundles of Courant algebroids that are of interest in the classical theory, namely, Dirac subbundles. A \textit{Dirac subbundle} is a maximally isotropic subbundle $L$ whose sections are closed under the bracket on $E$. In this case, we see that the function $T$ from the definition of a Courant algebroid has the property that $T\vert_L=0$ by the fact that $L$ is Lagrangian and closed under the bracket, and so the Jacobiator $J$ is identically zero by condition $1$ for Courant algebroids, meaning the bracket $[\cdot,\cdot]_E$ is a Lie bracket on $L$. Further, for any $l\in\Gamma(L)$, $[l,\cdot]_E$ acts by derivations on sections of $L$, and so $L\rightarrow M$ is a Lie algebroid. We now consider the special case where $E= A\oplus A^{*}$ for two Dirac subbundles $A$ and $A^{*}$ of $E$. Our choice of notation here comes from the fact that $A$ and $A^{*}$ must be maximally isotropic and hence disjoint, and so we can identify the second subspace as the dual to $A$ under the inner-product. In this case, we have the following definition.
\begin{Def}
Suppose that $A$ and $A^{*}$ are both Lie algebroids over $M$. We say that the pair $(A,A^{*})$ is a Lie bialgebroid if $d_A$ is a derivation of the Schouten bracket on $A^{*}$. I.e., $d_{A}[\xi,\eta]_{A^{*}} = [d_{A}\xi, \eta]_{A^{*}} + [\xi , d_A\eta]_{A^{*}}$ for $\xi ,\eta \in \Gamma(\extp A^{*})$.  
\end{Def}
It is possible to cast the theory of Lie bialgebroids in terms of Courant algebroids using the following theorems from \cite{Manin Triples}.
\begin{Thm}\label{bialgebroid -  Courant Algebroid}
Let $(A,A^{*})$ be a Lie bialgebroid with corresponding anchors $a$ and $a_{*}$. Then $(E= A\oplus A^{*}, \rho ,\langle \cdot , \cdot\rangle_{+} , [\cdot,\cdot])$ is a Courant algebroid, where $\rho = a + a_{*}$, $\langle X + \xi , Y+\eta \rangle_{\pm} = \langle \xi , Y \rangle \pm \langle \eta , X\rangle $, and for $e_1 = X + \xi$ and $e_2 = Y+\eta$, 
\begin{align}\label{Induced Bracket}
    [e_1,e_2] &= ([X,Y]_A + \mathcal{L}^{A^{*}}_\xi Y - \mathcal{L}_\eta^{A^{*}}X - \frac{1}{2}d_{A^{*}} \langle e_1,e_2\rangle_-)
    \\
    & \nonumber \, \, \, \, \,  + ([\xi,\eta]_{A^{*}} + \mathcal{L}^{A}_{X}\eta - \mathcal{L}^{A}_{Y}\xi + \frac{1}{2} d_{A}\langle e_1,e_2\rangle_{-})
\end{align}
\end{Thm}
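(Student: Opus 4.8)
\section*{Proof proposal}

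The plan is to verify directly the five defining properties of a Courant algebroid for the data $(E = A\oplus A^{*},\, \rho,\, \langle\cdot,\cdot\rangle_{+},\, [\cdot,\cdot])$, taking the axioms in order of increasing difficulty. First I would record the structural preliminaries. The pairing $\langle\cdot,\cdot\rangle_{+}$ is manifestly symmetric and non-degenerate, with $A$ and $A^{*}$ as complementary maximally isotropic subbundles, so the induced isomorphism $\beta\colon E\to E^{*}$ is simply the interchange of the two summands. For a function $f$ one computes from $d_{E}=\rho^{*}d$ and $\rho=a+a_{*}$ that $d_{E}f = d_{A}f + d_{A^{*}}f$ with $d_{A}f\in\Gamma(A^{*})$ and $d_{A^{*}}f\in\Gamma(A)$, so that applying $\beta^{-1}$ swaps these and $\beta^{-1}d_{E}f\in\Gamma(E)$. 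Axiom (4), $\rho\circ\beta^{-1}\circ d_{E}=0$, then reduces to the anchor identity $a(d_{A^{*}}f)+a_{*}(d_{A}f)=0$, a cancellation I would deduce from the compatibility of the two algebroid differentials encoded in the bialgebroid condition.

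Next I would dispatch the Leibniz rule (axiom 3) and the anchor compatibility (axiom 2). For axiom (3) I would expand $[e_{1},fe_{2}]$ from \eqref{Induced Bracket} using the Leibniz rules for $[\cdot,\cdot]_{A}$ and $[\cdot,\cdot]_{A^{*}}$ together with the derivation property of $\mathcal{L}^{A}$ and $\mathcal{L}^{A^{*}}$; the terms differentiating $f$ must reassemble into $(\rho(e_{1})f)e_{2} - \tfrac12\langle e_{1},e_{2}\rangle_{+}\beta^{-1}d_{E}f$, with the $d_{A}f$ and $d_{A^{*}}f$ pieces combining into $\beta^{-1}d_{E}f$. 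For axiom (2) I would apply $\rho=a+a_{*}$ to \eqref{Induced Bracket} and use that $a,a_{*}$ intertwine their brackets with $[\cdot,\cdot]_{TM}$, while the Lie-derivative cross terms push forward under the anchors to the commutators $[aX,a_{*}\eta]_{TM}$ and $[a_{*}\xi,aY]_{TM}$, the leftover differential terms cancelling exactly as in axiom (4).

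Before the crux I would record, as a lemma, the \emph{symmetric} form of the bialgebroid condition --- that $d_{A^{*}}$ is simultaneously a derivation of the $A$-Schouten bracket --- since the bracket \eqref{Induced Bracket} treats $A$ and $A^{*}$ on equal footing and both directions of the compatibility will be needed. The pairing-invariance axiom (5) is then a structural, if lengthy, computation: expanding $\rho(e)\langle h_{1},h_{2}\rangle_{+}$ and substituting \eqref{Induced Bracket} for $[e,h_{1}]$ and $[e,h_{2}]$, the generalized Lie derivatives act on the pairing precisely so as to reproduce the correction terms $\tfrac12 d_{E}\langle e,h_{1}\rangle_{+}(h_{2}) + \tfrac12 d_{E}\langle e,h_{2}\rangle_{+}(h_{1})$, which I would check according to how $e,h_{1},h_{2}$ distribute between $A$ and $A^{*}$.

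I expect axiom (1), controlling the Jacobiator $J(e_{1},e_{2},e_{3})$ by $\beta^{-1}d_{E}T$ with $T(e_{1},e_{2},e_{3})=\tfrac16\langle[e_{1},e_{2}],e_{3}\rangle_{+}+c.p.$, to be the main obstacle, and it is here that the Lie-bialgebroid hypothesis is used decisively. The approach is to substitute \eqref{Induced Bracket} into itself and sort the resulting terms both by the summand of $E$ they occupy and by the number of $d_{A}$/$d_{A^{*}}$ factors they carry. The pure contributions $[[X,Y]_{A},Z]_{A}+c.p.$ and $[[\xi,\eta]_{A^{*}},\zeta]_{A^{*}}+c.p.$ vanish by the Jacobi identity of each Lie algebroid separately, while every mixed term must be collapsed using the (symmetrized) bialgebroid derivation identity $d_{A}[\xi,\eta]_{A^{*}}=[d_{A}\xi,\eta]_{A^{*}}+[\xi,d_{A}\eta]_{A^{*}}$; what survives must then be recognized as $\beta^{-1}d_{E}T$. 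Because this is a long computation in which each mixed term has to be tracked, I would organize it so that the bialgebroid condition is invoked at a single isolated point rather than re-derived term by term. Alternatively, one can bypass the bookkeeping entirely by realizing $[\cdot,\cdot]$ as a derived bracket of the total differential on $\Gamma(\extp^{\bullet}E)$ in Roytenberg's graded formalism and reading the Courant axioms off the graded Jacobi identity, at the cost of first erecting that formalism; either way the content of the theorem is concentrated in this step.
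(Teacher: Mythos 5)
You should know at the outset that the paper does not prove this theorem: it is quoted as a known result from Liu--Weinstein--Xu, \emph{Manin Triples for Lie Bialgebroids}, so the only meaningful comparison is with the proof in that reference. Your strategy --- direct verification of the five axioms, with the preliminary observations that $A$ and $A^{*}$ sit inside $E$ as complementary maximally isotropic subbundles, that $d_{E}f=d_{A}f+d_{A^{*}}f$, and that axiom (4) reduces to the identity $a\circ a_{*}^{*}+a_{*}\circ a^{*}=0$ --- is exactly the route taken there, and your identification of the Jacobiator condition (axiom (1)) as the place where the Lie-bialgebroid hypothesis does essentially all the work is also correct.

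Two caveats mark the distance between your outline and a proof. First, what you have written is a plan: the verification of axiom (1) occupies several pages of careful bookkeeping in Liu--Weinstein--Xu, and ``what survives must then be recognized as $\beta^{-1}d_{E}T$'' is precisely the step that cannot be waved at --- a complete write-up must either display the cancellation of the mixed terms explicitly or genuinely erect the derived-bracket formalism you mention as an alternative, which is itself a substantial investment. Likewise the reduction of axiom (4) to $a\circ a_{*}^{*}+a_{*}\circ a^{*}=0$ still leaves that identity to be established; it is a consequence of the bialgebroid condition, but not a formal one. Second, the ``symmetric form of the bialgebroid condition'' you propose to record as a lemma --- that $(A,A^{*})$ is a Lie bialgebroid if and only if $(A^{*},A)$ is --- is a genuine theorem (due to Mackenzie--Xu, with a short proof by Kosmann-Schwarzbach), not a visible symmetry of the definition as stated in this paper, and it must be cited or proven before it can be invoked. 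Neither caveat is a wrong turn; both are places where the outline defers rather than supplies the argument.
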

\indent We will return to this choice of $\langle \cdot ,\cdot \rangle_{-}$ later as it will have significance in the para-Hermitian setting. It is easy to see that, in this case, we have a splitting of the derivative operator $d_E = d_A + d_A^{*}$. This construction is dual in the sense that if one begins with a Courant algebroid, we have the following:
\begin{Thm}\label{Courant-biagebroid}
Let $(E,\rho, \langle\cdot,\cdot\rangle , [\cdot,\cdot])$ be a Courant algebroid, and suppose that $L_1$ and $L_2$ are two maximally isotropic subbundles of $E$ that are both closed under the Courant bracket. Then $E= L_1\oplus L_2$ and $(L_1,L_2)$ is a Lie bialgebroid, where $L_2$ is considered the dual to $L_1$ under $\langle\cdot,\cdot \rangle$. 
\end{Thm}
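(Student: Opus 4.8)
The plan is to dispatch the structural claims first and then reduce the bialgebroid compatibility to a computation driven by the Jacobi identity. Since $L_1$ and $L_2$ are maximally isotropic, each equals its own orthogonal complement, $L_i^{\perp}=L_i$, and each has rank $\tfrac12\operatorname{rank}(E)$. The pairing induces a bundle map $\phi:L_2\to L_1^{*}$, $\phi(Z)=\langle Z,\cdot\rangle|_{L_1}$, and I would note that its kernel is exactly $L_1\cap L_2$: if $Z\in L_2$ annihilates $L_1$ then, by isotropy of $L_2$, it annihilates $L_1+L_2$, so $Z\in L_1^{\perp}=L_1$. Transversality of the two Dirac structures (implicit in regarding $L_2$ as the dual of $L_1$, and automatic in the motivating case where $L_1,L_2$ are the $\pm1$-eigenbundles of a para-complex structure) then makes $\phi$ an isomorphism and forces $E=L_1\oplus L_2$ by a rank count. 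Each $L_i$ is moreover a Lie algebroid by the discussion preceding the statement: isotropy and closure give $T|_{L_i}=0$, so the Jacobiator vanishes and $[\cdot,\cdot]_E|_{L_i}$ is a Lie bracket with anchor $\rho|_{L_i}$.

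With $L_2\cong L_1^{*}$, the operator $d_{L_1}$ acts on $\Gamma(\extp^{\bullet}L_2)$, and what remains is the derivation identity $d_{L_1}[\xi,\eta]_{L_2}=[d_{L_1}\xi,\eta]_{L_2}+[\xi,d_{L_1}\eta]_{L_2}$. Because both $d_{L_1}$ and the Schouten bracket are fixed on all of $\Gamma(\extp^{\bullet}L_2)$ by their action on $C^{\infty}(M)$ and $\Gamma(L_2)$ through the Leibniz and derivation rules, the defect of this identity is itself a derivation in each slot; I would therefore reduce to the generating cases where the arguments are functions or sections of $L_2$. The function--function and function--section cases follow quickly from $d_{L_1}=\rho_{L_1}^{*}d$ and compatibility of the anchors, so the essential content is the case $\xi,\eta\in\Gamma(L_2)$.

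The linchpin is a formula for the Courant bracket of mixed sections $X\in\Gamma(L_1)$ and $\eta\in\Gamma(L_2)$, obtained by splitting $[X,\eta]_E$ along $E=L_1\oplus L_2$ and extracting each component by pairing against sections of $L_1$ and $L_2$. Using the metric-compatibility axiom together with $\langle X,Y\rangle=0$ for $X,Y\in\Gamma(L_1)$, I would show that the $L_2$-component of $[X,\eta]_E$ is the Lie derivative $\mathcal{L}^{L_1}_X\eta=\iota_X d_{L_1}\eta+d_{L_1}\iota_X\eta$ corrected by $\tfrac12 d_{L_1}\langle\eta,X\rangle$, and symmetrically for the $L_1$-component of $[\xi,Y]_E$; this reproduces the bracket \eqref{Induced Bracket} for the present data, with $L_1$ in the role of $A$ and $L_2\cong L_1^{*}$ in the role of $A^{*}$. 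The derivation identity then follows by pairing the Courant--Jacobi identity on a triple $X,Y\in\Gamma(L_1)$, $\eta\in\Gamma(L_2)$ against a further section of $L_1$ and collecting $L_2$-components: the Jacobiator vanishes because $E$ is a Courant algebroid, and the mixed-bracket formula rewrites the surviving terms as $d_{L_1}[\xi,\eta]_{L_2}-[d_{L_1}\xi,\eta]_{L_2}-[\xi,d_{L_1}\eta]_{L_2}$.

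The hard part will be the third step: cleanly disentangling the two components of the mixed bracket and matching the abstractly defined $d_{L_1}$ (via Cartan's formula for the $L_1$ Lie algebroid) with the pieces the Courant axioms produce, all while keeping the $\tfrac12 d_E\langle\cdot,\cdot\rangle$ correction terms under control. A secondary point is to avoid circularity with the forward construction \eqref{Induced Bracket}: rather than assume that bracket is Courant (which presupposes the bialgebroid condition), I would use only the Jacobi identity guaranteed by the hypothesis that $E$ is already a Courant algebroid, and simply read the compatibility condition off it. Once the mixed-bracket formula is established, the remaining manipulation is routine.
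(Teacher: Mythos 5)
The paper offers no proof of this statement: it is quoted directly from Liu--Weinstein--Xu \cite{Manin Triples}, so your proposal can only be measured against the standard argument there. Your structural work is sound. You are right that transversality must be read into the hypotheses (as literally stated, $L_1=L_2=TM$ inside $\mathbb{T}M$ contradicts $E=L_1\oplus L_2$), your identification of $\ker\phi=L_1\cap L_2$ is correct, and your overall plan --- reduce the compatibility identity to generators, derive the mixed bracket $[X,\eta]_E$ from the metric axiom, then extract the derivation property from the Jacobi axiom --- is exactly how the Liu--Weinstein--Xu proof is organized.

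The final step, which you yourself flag as the crux, contains two concrete errors. First, you assert that ``the Jacobiator vanishes because $E$ is a Courant algebroid.'' In the skew-symmetric convention used here, axiom 1 of the definition says $J(e_1,e_2,e_3)=\beta^{-1}d_E T(e_1,e_2,e_3)$ with $T(e_1,e_2,e_3)=\frac{1}{6}\langle[e_1,e_2],e_3\rangle+c.p.$; this vanishes on triples drawn from a single Dirac structure (isotropy plus closure), but not on the mixed triples your computation requires: for $\xi,\eta\in\Gamma(L_2)$ and $X\in\Gamma(L_1)$ the term $\langle[\xi,\eta],X\rangle$ pairs a section of $L_2$ against one of $L_1$ and is generically nonzero. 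The contribution $\beta^{-1}d_E T$ must therefore be carried through on the same footing as the $\frac{1}{2}d_E\langle\cdot,\cdot\rangle$ corrections you do promise to track. Second, the valence of your chosen triple is wrong: the identity $d_{L_1}[\xi,\eta]_{L_2}=[d_{L_1}\xi,\eta]_{L_2}+[\xi,d_{L_1}\eta]_{L_2}$, evaluated on $X\wedge Y$, is an identity in two sections of $L_1$ and two of $L_2$, whereas the Jacobiator of $(X,Y,\eta)$ paired against a further section of $L_1$ involves three sections of $L_1$ and only one of $L_2$ --- indeed your own concluding expression contains a $\xi$ that never entered the triple. The configuration you need is $\langle J(\xi,\eta,X),Y\rangle$ with $\xi,\eta\in\Gamma(L_2)$ and $X,Y\in\Gamma(L_1)$ (equivalently $\langle J(X,Y,\xi),\eta\rangle$, by the self-duality of the bialgebroid condition). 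Both slips are repairable, but they sit precisely where the work of the proof lives, so as written the proposal does not yet close.
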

\begin{Def}
A Courant algebroid $E$ that admits a direct sum decomposition $E = A \oplus A^{*}$ into isotropic subbundles is called a \textit{proto-bialgebroid}. If one of $A$ or $A^{*}$ is closed under the bracket, then $E$ is called a \textit{quasi-Lie bialgebroid}.
\end{Def}
The obstruction to any Courant algebroid $E$ being a proto-bialgebroid is simply the existence of a dual pair of Lagrangian subbundles. Given a proto-bialgebroid $E = A\oplus A_{*}$, we can ask what the obstruction to closure with respect to the bracket on $E$ for each of the subbundles is. Given sections $\sigma_1,\sigma_2\in \Gamma(A)$ and $\tau_1,\tau_2 \in \Gamma(A^{*})$, we define the sections $\phi\in\Gamma(\extp^{3}(A^{*}))$ and $\psi\in \Gamma(\extp^3(A))$ by 
\begin{align}\label{almost Nijenhuis}
    \iota_{\sigma_1}\iota_{\sigma_2}\phi &= \pi_{A^{*}} [\sigma_1,\sigma_2]_{E},
    \\
   \nonumber \iota_{\tau_1}\iota_{\tau_2} \psi &= \pi_{A} [\tau_1,\tau_2]_{E}.
\end{align}
We can be sure that $\phi$ is a $3$-form on $A$, as the bracket is skew-symmetric, and for any $f\in C^{\infty}(M)$,
\begin{align*}
[\sigma_1,f\sigma_2]_{E} &= f[\sigma_1,\sigma_2]_E + \rho(\sigma_1)f \sigma_2 - \frac{1}{2}\langle \sigma_1,\sigma_2\rangle d_Ef
\\
&= \left( f\pi_{A}([\sigma_1,\sigma_2]_E) + \rho(\sigma_1)f\sigma_2 \right) \oplus f\pi_{A^{*}}([\sigma_1,\sigma_2]_{E}),
\end{align*}
where at the end we have split the sum into its $A$ and $A^{*}$ part. We can easily see that $\pi_{A^{*}}[\cdot,\cdot]$ is tensoral from the final expression. We also obtain a proto-Lie bracket $[\cdot,\cdot]_A$ on $A$ which fails to satisfy the Jacobi identity defined as $[\cdot,\cdot]_{A} := \pi_{A}[\cdot ,\cdot ]\vert_{A}$. We can do the same thing for $A^{*}$, and obtain $\psi\in \Gamma(\extp^{3}(A))$ that measures the failure of $A^{*}$ to be closed and a proto-Lie bracket $[\cdot,\cdot]_{A^{*}} = \pi_{A^{*}}[\cdot,\cdot]\vert_{A^{*}}$. From the discussion on quasi-bialgebroids in \cite{Roytenberg}, in particular equation 3.18,  we arrive at the following theorem.
\begin{Thm}\label{Obstruction-Lie-Bialgebroid}
Let $E=A\oplus A^{*}$ be a proto-bialgebroid. The obstructions to the closure of the subbbundles $A$ and $A^{*}$ under the Courant bracket are the $3$-forms $\phi\in \Gamma(\extp^{3}(A^*))$ and $\psi\in \Gamma(\extp^3(A))$. Further $d_{A}\phi= d_{A^{*}}\psi = 0$ for any proto-bialgebroid, where $d_A$ and $d_A^{*}$ are defined in equation \eqref{Cartan Formula} using $\pi_A[\cdot,\cdot]_E$ and $\pi_A^{*}[\cdot,\cdot]_E$ as the stand-in for the Lie bracket respectively.
\end{Thm}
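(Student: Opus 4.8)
The plan is to separate the two assertions. The statement that $\phi$ and $\psi$ are the complete obstructions to closure is essentially a reading of the definitions, whereas the $d$-closedness carries the real content.

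For the obstruction claim, recall from \eqref{almost Nijenhuis} that the $A^{*}$-component $\pi_{A^{*}}[\sigma_1,\sigma_2]_E\in\Gamma(A^{*})$ is exactly $\iota_{\sigma_1}\iota_{\sigma_2}\phi$, and the Leibniz computation preceding the theorem already shows this is $C^{\infty}(M)$-bilinear. Hence $\Gamma(A)$ is closed under $[\cdot,\cdot]_E$ precisely when $\pi_{A^{*}}[\sigma_1,\sigma_2]_E$ vanishes identically, i.e.\ precisely when $\phi\equiv 0$; reversing the roles of $A$ and $A^{*}$ handles $\psi$. Before proceeding I would verify that $\phi$ is a genuine totally skew $3$-form: pairing $\iota_{\sigma_1}\iota_{\sigma_2}\phi$ with a third section $\sigma_3\in\Gamma(A)$ and using isotropy of $A$ gives $\phi(\sigma_1,\sigma_2,\sigma_3)=\langle[\sigma_1,\sigma_2]_E,\sigma_3\rangle$, and skew-symmetry of the bracket together with axiom (5) applied to $e=\sigma_1$, $h_1=\sigma_2$, $h_2=\sigma_3$ (whose left-hand side and last two terms vanish on the isotropic bundle $A$) forces this quantity to be alternating in all three entries.

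For the closedness I would tie $d_A\phi$ to the Courant Jacobiator $J$ through the same representation $\phi(\sigma_1,\sigma_2,\sigma_3)=\langle[\sigma_1,\sigma_2]_E,\sigma_3\rangle$. Since all three cyclic terms of $T$ agree on the isotropic bundle $A$, one has $T|_A=\tfrac12\phi$, so pairing axiom (1) with a fourth section $e\in\Gamma(A)$ yields $\langle J(\sigma_1,\sigma_2,\sigma_3),e\rangle=\rho(e)\,T(\sigma_1,\sigma_2,\sigma_3)=\tfrac12\rho(e)\phi(\sigma_1,\sigma_2,\sigma_3)$. Expanding the left-hand side by splitting each Courant bracket as $[\sigma_i,\sigma_j]_E=[\sigma_i,\sigma_j]_A+\iota_{\sigma_i}\iota_{\sigma_j}\phi$ produces, on the one hand, the purely $A$-valued iterated brackets $\langle[[\sigma_i,\sigma_j]_A,\sigma_k]_E,e\rangle=\phi([\sigma_i,\sigma_j]_A,\sigma_k,e)$ and, on the other, cross terms $\langle[\iota_{\sigma_i}\iota_{\sigma_j}\phi,\sigma_k]_E,e\rangle$ that the compatibility identity (5), applied with the isotropic pair $\sigma_k,e$ (whose pairing vanishes), converts into anchor derivatives of $\phi$. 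Assembling these identities over the cyclic sum in $J$ and then antisymmetrizing fully over $\{\sigma_1,\sigma_2,\sigma_3,e\}$ should reproduce term-by-term the Cartan formula \eqref{Cartan Formula} for $d_A\phi(\sigma_1,\sigma_2,\sigma_3,e)$ and force it to vanish; the dual computation with $A^{*}$ and $\psi$ gives $d_{A^{*}}\psi=0$.

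The main obstacle is the sign bookkeeping in this last expansion: tracking signs across the cyclic sum defining $J$ and the full antisymmetrization, and — most delicately — showing that the $A^{*}$-valued correction terms recombine into exactly the four anchor-derivative terms $\rho(\sigma_i)\phi(\cdots)$ of the Cartan formula rather than leaving a residual tensorial obstruction. An appealing shortcut, and the route indicated by the reference to \cite{Roytenberg}, is to pass to the degree-$2$ graded symplectic manifold whose cubic Hamiltonian $\Theta=\mu+\gamma+\phi+\psi$ encodes the proto-bialgebroid, with $d_A=\{\mu,\cdot\}$ and $d_{A^{*}}=\{\gamma,\cdot\}$; the Courant axioms become the master equation $\{\Theta,\Theta\}=0$, and projecting it onto $\Gamma(\extp^{4}A^{*})$ and $\Gamma(\extp^{4}A)$ isolates precisely $d_A\phi=0$ and $d_{A^{*}}\psi=0$, with all the sign bookkeeping absorbed into the bidegree decomposition. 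I would ultimately prefer this route and use the direct computation only as a consistency check, for instance against the twisted case $A=TM$ in the standard Courant algebroid, where $\phi$ is the twisting $3$-form $H$ and $d_A\phi=0$ reduces to the familiar $dH=0$.
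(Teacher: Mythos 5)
Your proposal is correct and follows essentially the same route as the paper: the paper likewise establishes that $\pi_{A^{*}}[\cdot,\cdot]_E$ is tensorial via the Leibniz computation and then obtains $d_A\phi = d_{A^{*}}\psi = 0$ by appealing to Roytenberg's bidegree decomposition of the master equation $\{\Theta,\Theta\}=0$ for the cubic Hamiltonian $\Theta = \mu+\gamma+\phi+\psi$ (the cited equation 3.18), which is exactly your preferred graded-symplectic argument. Your additional verification that $\phi$ is totally antisymmetric --- using isotropy of $A$ and axiom (5) --- is a detail the paper asserts without proof, and is a worthwhile inclusion.
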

This final condition that $d_A\phi = d_{A^{*}}\psi = 0$ will have more significance in the para-Hermitian setting. We are now in a position to define our main object of study. 
\pagebreak
\section{ \normalsize Para-Hermitian Algebroids}
\indent We now have all of the ingredients necessary to define a para-Hermitian algebroid. The goal will be to cast all of the previous facts about Courant and Lie algebroids in terms of the para-Hermitian structure. To begin, we have
\begin{Def}
Let $(E,\rho , \langle\cdot,\cdot\rangle , [\cdot,\cdot])$ be a Courant algebroid over $M$. Then $E$ is called an almost para-Hermitian algebroid if it is equipped with a para-complex structure $J$ that is compatible with the inner product (i.e., $\langle J \cdot, J\cdot \rangle = - \langle \cdot ,\cdot \rangle$). We call $(E,J)$ half integrable if one of the eigenbundles of $J$ is closed under the Courant bracket, and integrable if both are closed under the Courant bracket. In the case where both eigenbundles are integrable, we refer to $(E,J)$ as a para-Hermitian algebroid.
\end{Def}
\indent Courant algebroids are anchored to the tangent bundle of the underlying manifold, and so we are lead to consider para-holomorphic algebroids, which occur when a para-Hermitian algebroid, $(E,J_E)$, is anchored to a para-Hermitian manifold, $(M,J_{TM})$, and the anchor map, $\rho :E\rightarrow TM$, is para-holomorphic. Another program in this chapter will be to generalize the construction in Example \ref{Connection giving para-Hermitian} by considering connections $A:TM\rightarrow E$. As we will see, connections are themselves para-complex structures, and just as in the case of para-holomorphic algebroids, it makes sense to consider the case where they commute with another para-complex structure on $E$. We complete this generalization in Example \ref{Normal} in section 5, which one can think of as containing the companion examples to this section. 
\subsection{ \normalsize Integrability and the Para-K\"{a}her Condition}
\indent In order to measure the integrability of the eigenbundles, one can construct the Nijenhuis tensor as in equation \eqref{Nijenhuis Tensor}. Immediately, we see from Proposition \ref{Maximally Isotropic} that if the eigenbundles are integrable, then they are maximally isotropic, and so by Theorem \ref{Courant-biagebroid}, $E= E_+ \oplus E_{-}$ and $(E_+,E_-)$ is a Lie bialgebroid. By the discussion in the previous section, both $E_+$ and $E_-$ form Lie algebroids under the restricted brackets. This allows us to define the derivative operators
\begin{align}
    d_\pm : C^{\infty}(M)\rightarrow \Gamma(E^{*}_\pm)
\end{align}
with the property that $d_E = d_+ + d_-$. These operators then give us a type decomposition of forms. If we identify $E_{\pm}^{*}\cong E_{\mp}$ under $\langle \cdot , \cdot \rangle$ and consider them as subbundles of $E$, then
\begin{align}
    \extp^{(1,0)}E^{*} &:= \{\omega \in E^{*} \vert \omega (\sigma^{*}) = 0  \, \, \, \forall \sigma^{*}\in \Gamma(E_{-})\},
    \\
    \nonumber \extp^{(0,1)}E^{*} &:= \{\omega \in E^{*} \vert \omega (\sigma) = 0  \, \, \, \forall \sigma\in \Gamma(E_{+})\}.
\end{align}
The two derivations $d_+$ and $d_-$ are only traditionally defined on forms of type $\extp^{(p,0)}E^{*}$ and $\extp^{(0,q)}E^{*}$, respectively, but we see that under this splitting,
\begin{align}
    d_E : \extp^{(p,q)}E^{*}\rightarrow \extp^{(p+1,q)}E^{*} \oplus \extp^{(p,q+1)}E^{*},
\end{align}
and so we can in general define $d_{+}$ to be the projection onto the $(p+1,q)$ subbundle and $d_-$ to be the projection onto the $(p,q+1)$ subbundle, as in the classical para-Hermitian case. Note also that this decomposition is only possible when both eigenbundles are integrable, and so we have an analogous result to Theorem \ref{Classical Decomposition of Forms}. It is also important to note that the derivative operators act by derivations on the induced Schouten brackets associated to the dual Lie algebroid, so for any $e_{1}^{\pm}, e_2^{\pm} \in \bigwedge E_{\pm}^*$,
\begin{align}\label{Derivational}
    d_{\pm}[e_1^{\mp},e_2^{\mp}]_{\mp} = [d_{\pm}e_{1}^{\mp} , e_2^{\mp}] + [e_{1}^{\mp} , d_{\pm}e_{2}^{\mp}].
\end{align}
\indent Each of these operators can be expressed in local form. Consider the local basis $\{e_1^{+},\cdots,e_n^{+},e_1^{-},\cdots,e_n^{-}\}$, and suppose that
\begin{align}
    a_{+}(e_i^{+}) &= A_{i}^{j}(x) \frac{\partial}{\partial x^j}, & a_{-}(e_i^{-}) = A_{\overline{i}}^{j}(x) \frac{\partial}{\partial x^j},
\end{align}
where $a_{\pm} = \rho\circ \pi_{E_{\pm}}$, and
\begin{align}
    [e_i^{+},e_j^{+}] &= C_{ij}^k(x) e_k^{+} & [e_i^{-},e_j^{+}] &= C_{\overline{i}j}^{k}(x)e_k^{+} + C_{\overline{i}j}^{\overline{k}}(x)e_k^{-}  & [e_i^{-},e_j^{-}] &= C_{\overline{i}\overline{j}}^{\overline{k}}(x)e_k^{-}
\end{align}
From these two facts, one can readily compute that for $\sigma^{\pm}\in \Gamma(E_{\pm})$ locally expressed as $\sum_i\sigma_i^\pm e_i^\pm$,
\begin{align}
\nonumber d_+(\sigma^{+}) &= -\sum_{i,j}\left(A_{j}^{k}(x) \frac{\partial \sigma_i^{+}}{\partial x^k} + C_{\overline{i}j}^{k}(x) \sigma_k^{+}\right) e_i^+ \wedge e_j^- ,
\\
d_{-}(\sigma^{+}) &= -\sum_{i,j}\left( A_{\overline{j}}^{k}(x) \frac{\partial \sigma_i^+}{\partial x^k} - A_{\overline{i}}^k (x)\frac{\partial \sigma_j^+}{\partial x^k} + C_{\overline{i}\overline{j}}^{\overline{k}}(x)\sigma_k^{+}\right)e_i^{+}\wedge e_j^{+},
\\
\nonumber d_{+}(\sigma^{-}) &= -\sum_{i,j}\left( A_{j}^{k}(x)\frac{\partial \sigma_i^{-}}{\partial x^k} - A_{i}^{k}(x) \frac{\partial \sigma_j^{-}}{\partial x^k} + C_{ij}^{k}(x) \sigma^{-}_k \right) e_i^{-}\wedge e_j^{-},
\\
\nonumber d_- (\sigma^{-}) &= -\sum_{i,j}\left( A_{\overline{j}}^k(x) \frac{\partial \sigma_i^{-}}{\partial x^k} + C_{i\overline{j}}^{\overline{k}}(x)\sigma_k^{-} \right)e_i^{-}\wedge e^{+}_j.
\end{align}
\indent Just as in the case of a para-holomorphic vector bundle, we have $d_{\pm}(\sigma^i e_i^{\pm}) = \partial_{\pm}(\sigma^i)\otimes e_i^{\pm} + \sigma^i d_{\pm}(e_i^{\pm})$. It would be useful at this point to define what we mean by para-holomorphic forms. We consider a section $\sigma\oplus \tau \in \extp^{(p,q)}E^{*}\oplus \extp^{(q,p)}E^{*}$ to be \textit{para-holomorphic} if $(d_{-}\oplus d_{+})(\sigma \oplus \tau) = 0$. In particular, there are two ways for a section $\sigma \in \Gamma(E)$ to be para-holomorphic. By identifying $E\cong E^{*}$ under the non-degenerate pairing on $E$, one can decompose $\sigma = \sigma^{+}\oplus \sigma^{-} \in \extp^{(0,1)}E^{*}\oplus \extp^{(1,0)}E^{*}$, or $\sigma = \sigma^{-}\oplus \sigma^{+} \in \extp^{(1,0)}E^{*}\oplus\extp^{(0,1)}E^{*}$. In this way, $\sigma$ can be para-holomorphic as a $(0,1)\oplus(1,0)$-form or as a $(1,0)\oplus (0,1)$-form.
\\

\indent Returning to Theorem \ref{Obstruction-Lie-Bialgebroid}, the notion of pseudo-para-holomorphic structures on almost para-Hermitian algebroids gives an interesting interpretation of the integrability condition for the eigenbundles. Consider the bundle $E_{+}$. The Nijenhuis tensor when restricted to $E_{+}$ is given by 
\begin{align}\label{Nijenhuis Eigenbundle}
    N^J(e_i^+, e_j^+) = \frac{1}{2}\left( [e_i^{+},e_j^{+}]_E - J[e_i^+,e_j^+] \right),
\end{align}
which when compared to equation \eqref{almost Nijenhuis} tells us that $\phi(e_i^{+},e_j^{+},e_k^{+}) =\langle e_k^{+}, N^J(e_{i}^{+},e_j^{+})\rangle + c.p.$. This can be done for $E_{-}$ as well, to obtain $\psi(e_i^{-},e_j^{-},e_k^{-}) = \langle e_k^{-},N^J(e_i^{-},e_j^{-})\rangle + c.p.$. Stretching this analogy further, we have $d_{+}\phi=0$ and $d_{-}\psi=0$, and so the obstruction to the integrability of the brackets is a pseudo-para-holomorphic $(3,0)\oplus (0,3)$-form corresponding to the restriction of the Nijenhuis bracket onto each eigenbundle. This was first realized by Svoboda in \cite{Svoboda}. 
\\

\indent In light of this, and Theorem \ref{bialgebroid -  Courant Algebroid}, we can investigate how the bracket on $E$ is constructed from the given data of the Lie bialgebroid $(A,A^{*})$. In particular, we take notice of $\langle\cdot,\cdot \rangle_{-}$. In terms of the metric on $E$, which we denote $\langle \cdot,\cdot \rangle$, and the para-complex structure $J$ with $+1$-eigenbundle $A$ and $-1$-eigenbundle $A^{*}$, we define the fundamental 2-form on $E$ by
\begin{align}
    \omega = \langle  \cdot ,J\cdot  \rangle.
\end{align}
\noindent Note that when the eigenbundles are integrable, this form is of type $(1,1)$. Recall that in the case where the eigenbundles are integrable, $E_+$ and $E_{-}$ are isotropic, meaning $\omega(e_i^{\pm},e_j^{\pm})=0$. The ``para-K\"{a}hler" condition, usually given by $d_E \omega= 0$, can be equivalently stated as $d_\pm\omega =0$. We can use this expression to obtain a PDE constraining the coefficients of $\omega$. To this end, we have the following.
\begin{Lem}
    The coefficients of the fundamental $2$-form $\omega$ associated to the para-Hermitian Courant algebroid $E= E_{+}\oplus E_{-}$ satisfy
    \begin{align}
     A_{\overline{i}}^l(x) \frac{\partial \omega_{\overline{j}k}}{\partial x^l} - A_{\overline{j}}^{l}(x)\frac{\partial \omega_{\overline{i}k}}{\partial x^l}- C_{ij}^{l}(x) \omega_{l\overline{k}} + C_{i\overline{k}}^{\overline{l}}(x) \omega_{\overline{l}j} - C_{j\overline{k}}^{\overline{l}}(x)\omega_{\overline{l}i}&=0,
    \\
    A_i^{l}(x) \frac{\partial \omega_{j\overline{k}}}{\partial x^l} - A_{j}^{l}(x) \frac{\partial \omega _{i\overline{k}}}{\partial x^l} -C_{ij}^{l}(x)\omega_{l\overline{k}} + C_{i\overline{k}}^{\overline{l}}(x)\omega_{\overline{l}j} - C_{j\overline{k}}^{\overline{l}}(x)\omega_{\overline{l}i} &= 0.
    \end{align}
\end{Lem}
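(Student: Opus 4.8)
The plan is to obtain the two identities as the vanishing of $d_E\omega$ evaluated on the two natural families of basis triples, using the Cartan formula \eqref{Cartan Formula} for the Lie-algebroid differential together with the fact that $\omega$ is of type $(1,1)$. Since both eigenbundles are integrable, the para-K\"{a}hler hypothesis $d_E\omega = 0$ is equivalent to $d_+\omega = 0$ and $d_-\omega = 0$. As $\omega$ is of type $(1,1)$, $d_+\omega$ is of type $(2,1)$ and $d_-\omega$ of type $(1,2)$, so each is completely determined by its values on triples of the form $(e_i^+, e_j^+, e_k^-)$ and $(e_i^-, e_j^-, e_k^+)$ respectively. Equivalently, one evaluates $d_E\omega$ on these two families of triples directly, the complementary type-component vanishing on each family. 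Throughout I would use that the only nonvanishing components of $\omega$ are the mixed ones $\omega_{i\overline{j}} = \omega(e_i^+, e_j^-)$, since $\omega(e_i^+, e_j^+) = 0 = \omega(e_i^-, e_j^-)$.

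First I would compute $d_E\omega(e_i^+, e_j^+, e_k^-)$. Applying \eqref{Cartan Formula} with $p = 2$, the three anchor terms are $a_+(e_i^+)\,\omega(e_j^+, e_k^-)$, $-a_+(e_j^+)\,\omega(e_i^+, e_k^-)$, and $+a_-(e_k^-)\,\omega(e_i^+, e_j^+)$; the last drops out because $\omega$ has no $(2,0)$-part, while substituting $a_+(e_i^+) = A_i^l \frac{\partial}{\partial x^l}$ into the first two produces the derivative terms $A_i^l \frac{\partial \omega_{j\overline{k}}}{\partial x^l} - A_j^l \frac{\partial \omega_{i\overline{k}}}{\partial x^l}$. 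The three bracket terms are $-\omega([e_i^+, e_j^+], e_k^-)$, $+\omega([e_i^+, e_k^-], e_j^+)$, and $-\omega([e_j^+, e_k^-], e_i^+)$: the pure $(+,+)$-bracket contributes $-C_{ij}^l\,\omega_{l\overline{k}}$, and each mixed bracket is expanded via the local relations, keeping only its $E_-$-component (the $E_+$-component pairs to zero against the surviving $E_+$-argument). This yields the remaining terms $+C_{i\overline{k}}^{\overline{l}}\,\omega_{\overline{l}j} - C_{j\overline{k}}^{\overline{l}}\,\omega_{\overline{l}i}$, and setting the total to zero gives the second identity.

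The first identity is obtained by the entirely parallel computation of $d_E\omega(e_i^-, e_j^-, e_k^+)$: the anchor term on $\omega(e_i^-, e_j^-)$ again vanishes, substituting $a_-(e_i^-) = A_{\overline{i}}^l \frac{\partial}{\partial x^l}$ produces the derivative terms $A_{\overline{i}}^l \frac{\partial \omega_{\overline{j}k}}{\partial x^l} - A_{\overline{j}}^l \frac{\partial \omega_{\overline{i}k}}{\partial x^l}$, and now the pure $(-,-)$-bracket together with the two mixed brackets $[e_i^-, e_k^+]$, $[e_j^-, e_k^+]$ supply the structure-constant terms (here one keeps the $E_+$-component of each mixed bracket, since the remaining argument is of type $-$). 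Organizing these exactly as before produces the first displayed PDE.

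The step I expect to be the main obstacle is the bookkeeping for the mixed brackets: $[e_i^+, e_k^-]$ is not directly among the listed local relations, so I would first invoke skew-symmetry of the Courant bracket to write $[e_i^+, e_k^-] = -[e_k^-, e_i^+]$ and then apply $[e_k^-, e_i^+] = C_{\overline{k}i}^l e_l^+ + C_{\overline{k}i}^{\overline{l}} e_l^-$, folding the resulting sign into the coefficient $C_{i\overline{k}}^{\overline{l}}$. Aligning every Cartan-formula sign $(-1)^{m+n}$ with these skew-symmetry signs, while correctly discarding each contraction of $\omega$ against two like-type arguments, is the delicate part; once those cancellations are tracked consistently the two PDEs fall out immediately. (I note in passing that, carried out this way, the structure-constant terms in the first identity are naturally the \emph{barred} constants governing $d_-\omega$ rather than the unbarred ones appearing in $d_+\omega$.)
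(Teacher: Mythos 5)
Your proposal is correct and is exactly the paper's argument: the paper's proof consists of the single remark that the identities follow from the Cartan formula \eqref{Cartan Formula} by substituting basis vectors from each eigenspace, which is precisely the computation you carry out in detail on the triples $(e_i^+,e_j^+,e_k^-)$ and $(e_i^-,e_j^-,e_k^+)$, using that $\omega$ has only mixed components. Your closing parenthetical is also a legitimate catch: evaluating on $(e_i^-,e_j^-,e_k^+)$ does produce the barred structure constants $C_{\overline{i}\overline{j}}^{\overline{l}}$, $C_{\overline{i}k}^{l}$, $C_{\overline{j}k}^{l}$, so the unbarred constants in the paper's first displayed identity appear to be a typographical slip in the statement rather than an error in your derivation.
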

\begin{proof}
This follows easily from equation \eqref{Cartan Formula}, along with the substitutions of vectors from each desired eigenspace. The first equation corresponds to $d_+\omega=0$ and the second corresponds to $d_-\omega=0$.
\end{proof}
\subsection{ \normalsize Exact Almost Para-Hermitian Algebroids}
Recall that for an almost para-Hermitian algebroid $E = E_{+}\oplus E_{-}$, the projections onto each eigenbundle induce a splitting in the anchor $\rho :E\rightarrow TM$ as the sum of the two operators $a_{\pm}:E_{\pm}\rightarrow TM$.  This splitting of the anchor map induces the following sequences:
\begin{center}
    \begin{tikzcd}
    0\arrow[r,""] & T^*M \arrow[r,"a_{-}^{*}"] & E_{+} \arrow[r,"a_{+}"] & TM\arrow[r,""]&0 ,
    \end{tikzcd}
    \begin{tikzcd}
    0\arrow[r,""] & T^*M \arrow[r,"a_{+}^{*}"] & E_{-} \arrow[r,"a_{-}"] & TM\arrow[r,""]&0 .
    \end{tikzcd}
\end{center}
\begin{Lem}
    Let $E=E_{+}\oplus E_{-}$ be a Lie bialgebroid, with anchor $\rho$. If $\rho \circ \rho^{*} = 0 $, then $a_{\pm}\circ a_{\mp}^{*} + a_{\mp}\circ a_{\pm}^{*}=0$
\end{Lem}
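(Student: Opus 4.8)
The plan is to reduce the identity to a one-line computation of $\rho \circ \rho^*$, once the map $\rho^*$ has been resolved into its components along the eigenbundle decomposition $E = E_+ \oplus E_-$. The essential input is the pair of short exact sequences displayed just above the statement, which already encode how the dual of the anchor interacts with the splitting; the work is to confirm that $\rho^*$ really does decompose through the arrows $a_-^*$ and $a_+^*$ appearing there.

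First I would identify the components of $\rho^*$. Since $a_\pm = \rho \circ \pi_{E_\pm}$, we have $\rho = a_+ + a_-$, with $\rho$ restricting to $a_+$ on $E_+$ and to $a_-$ on $E_-$. For $\xi \in T^*M$ and $e = e_+ + e_- \in \Gamma(E)$, the defining relation $\langle \rho^*\xi, e\rangle = \xi(\rho(e))$ expands to
\begin{align*}
\langle \rho^*\xi, e\rangle &= \xi(a_+(e_+)) + \xi(a_-(e_-)).
\end{align*}
Writing $\rho^*\xi = (\rho^*\xi)_+ + (\rho^*\xi)_-$ and using that $E_\pm$ are isotropic, the left-hand side equals $\langle (\rho^*\xi)_+, e_-\rangle + \langle (\rho^*\xi)_-, e_+\rangle$. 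Matching the two sides under the identifications $E_\pm^* \cong E_\mp$ furnished by $\langle\cdot,\cdot\rangle$ yields $(\rho^*\xi)_+ = a_-^*\xi \in E_+$ and $(\rho^*\xi)_- = a_+^*\xi \in E_-$, which are exactly the maps in the two sequences. Hence $\rho^* = a_-^* + a_+^*$.

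Next I would compute the composite directly. For $\xi \in T^*M$,
\begin{align*}
\rho(\rho^*\xi) &= \rho(a_-^*\xi) + \rho(a_+^*\xi) = a_+(a_-^*\xi) + a_-(a_+^*\xi),
\end{align*}
since $a_-^*\xi \in E_+$ (on which $\rho$ acts as $a_+$) and $a_+^*\xi \in E_-$ (on which $\rho$ acts as $a_-$). The hypothesis $\rho \circ \rho^* = 0$ then reads $a_+ \circ a_-^* + a_- \circ a_+^* = 0$, which is precisely the asserted identity; the two sign choices $a_\pm \circ a_\mp^* + a_\mp \circ a_\pm^*$ produce the same equation.

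The only genuinely delicate step is the first: correctly pinning down the identification $E_\pm^* \cong E_\mp$ induced by $\langle\cdot,\cdot\rangle$ and verifying that, under it, the honest dual $a_\pm^* : T^*M \to E_\pm^*$ coincides with the map $T^*M \to E_\mp$ labelling the short exact sequences. Once that bookkeeping is settled, the isotropy of the eigenbundles does all the work, and the remaining composition is immediate.
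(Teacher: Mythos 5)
Your proof is correct and follows essentially the same route as the paper: decompose $\rho = a_+ + a_-$ and $\rho^* = a_-^* + a_+^*$ along the eigenbundles, expand the composite, and observe that the hypothesis $\rho\circ\rho^*=0$ leaves exactly the asserted identity. You are somewhat more careful than the paper in justifying, via the isotropy of $E_\pm$, that $\rho^*\xi$ has $E_+$-component $a_-^*\xi$ and $E_-$-component $a_+^*\xi$ — the paper simply asserts $a_\pm\circ a_\pm^* = 0$ ``owing to the Lie bialgebroid structure,'' which is the same vanishing you obtain for free from that bookkeeping.
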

\begin{proof}
To begin, suppose that $E=E_{+}\oplus E_{-}$ is a Lie bialgebroid. We find
\begin{align*}
    \rho \circ \rho^{*} (X) &=  (a_{+} + a_{-}) (a_{+}^{*}(X) + a_{-}^{*}(X)) 
    \\
    &= a_{+}(a_{+}^{*}(X)) + a_{+}(a_{-}^{*}(X)) + a_{-}(a_{+}^{*}(X))+ a_{-}(a_{-}^{*}(X))
    \\
    &= a_{+}(a_{-}^{*}(X)) + a_{-}(a_{+}^{*}(X))
\end{align*}
where we have used the fact that $a_{\pm}\circ a_\pm^{*}=0$, owing to the fact that $E_{+}\oplus E_{-}$ is a Lie bialgebroid. The statement then follows by the fact that $\rho \circ \rho^{*}=0$.
\end{proof}
This gives us the following useful proposition.
\begin{prop}\label{Exact Hermitian}
Let $(E,J)$ be an almost para-Hermitian algebroid. If $E= E_{+}\oplus E_{-}$ is exact, then $\ker(a_{\pm}) = Im(a_{\mp}^{*})$. 
\end{prop}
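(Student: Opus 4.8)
The plan is to prove each of the two asserted identities as a pair of inclusions, and since interchanging $E_{+}$ and $E_{-}$ swaps the two statements I would write out only $\ker(a_{+})=\mathrm{Im}(a_{-}^{*})$. First I would record what exactness buys us: the sequence $0\to T^{*}M\xrightarrow{\rho^{*}}E\xrightarrow{\rho}TM\to 0$ gives $\ker(\rho)=\mathrm{Im}(\rho^{*})$ and $\ker(\rho^{*})=0$, and since $\langle\rho^{*}\xi,\rho^{*}\eta\rangle=\eta(\rho\rho^{*}\xi)=0$ the kernel $\ker(\rho)=\mathrm{Im}(\rho^{*})$ is itself isotropic. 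I would also fix the identification $E_{\mp}^{*}\cong E_{\pm}$ furnished by $\langle\cdot,\cdot\rangle$ (valid because the eigenbundles are maximally isotropic by Proposition \ref{Maximally Isotropic}), under which the component maps of $\rho^{*}$ are precisely $\pi_{E_{+}}\circ\rho^{*}=a_{-}^{*}$ and $\pi_{E_{-}}\circ\rho^{*}=a_{+}^{*}$, so that $\rho^{*}=a_{-}^{*}+a_{+}^{*}$.

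The inclusion $\ker(a_{+})\subseteq\mathrm{Im}(a_{-}^{*})$ I would settle by a direct diagram chase. If $\sigma\in\Gamma(E_{+})$ has $a_{+}(\sigma)=0$, then $\rho(\sigma)=0$ since $a_{+}=\rho|_{E_{+}}$, so by exactness $\sigma=\rho^{*}\xi$ for some $\xi\in\Omega^{1}(M)$. Decomposing $\rho^{*}\xi=a_{-}^{*}\xi+a_{+}^{*}\xi$ into its $E_{+}$ and $E_{-}$ parts and using that $\sigma$ lies in $E_{+}$ forces the $E_{-}$ part $a_{+}^{*}\xi=\pi_{E_{-}}(\sigma)$ to vanish, whence $\sigma=a_{-}^{*}\xi\in\mathrm{Im}(a_{-}^{*})$. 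This uses only exactness together with the splitting and should present no difficulty.

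The hard part will be the reverse inclusion $\mathrm{Im}(a_{-}^{*})\subseteq\ker(a_{+})$, which is exactly the vanishing $a_{+}\circ a_{-}^{*}=0$. The previous lemma only yields the weaker statement $a_{+}\circ a_{-}^{*}+a_{-}\circ a_{+}^{*}=0$, i.e. that the two cross terms cancel; in fact one checks that $\eta(a_{+}a_{-}^{*}\xi)=\langle(\rho^{*}\eta)_{-},(\rho^{*}\xi)_{+}\rangle$ is antisymmetric in $(\xi,\eta)$, so the lemma leaves open the possibility that $a_{+}\circ a_{-}^{*}$ is a nonzero (antisymmetric) tensor. To pin it to zero I would reduce the claim to showing that $\ker(\rho)$ is $J$-invariant, equivalently $\pi_{E_{+}}(\ker\rho)\subseteq\ker\rho$: granting this, $a_{-}^{*}\xi=\pi_{E_{+}}(\rho^{*}\xi)\in\ker\rho$, so $a_{+}(a_{-}^{*}\xi)=\rho(a_{-}^{*}\xi)=0$, and the mirror argument handles $a_{-}\circ a_{+}^{*}$. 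The crux of the whole proof is thus to extract this $J$-invariance of $\ker\rho$ from exactness, and I would attempt it by exploiting that $E_{+}$, $E_{-}$ and $\ker\rho$ are three maximally isotropic subbundles, pairing $\pi_{E_{+}}(\ker\rho)$ against $\ker\rho$ to show the former sits inside the latter --- this is precisely the information that the cancellation identity of the previous lemma does not by itself provide.
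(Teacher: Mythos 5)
Your first inclusion, $\ker(a_{\pm})\subseteq \mathrm{Im}(a_{\mp}^{*})$, is exactly the paper's argument: lift a kernel element to $\rho^{*}(\xi)$ using exactness, split $\rho^{*}=a_{-}^{*}+a_{+}^{*}$ along $E=E_{+}\oplus E_{-}$, and observe that the component in the opposite eigenbundle must vanish. Up to that point you and the paper coincide, and your bookkeeping of the identification $E_{\mp}^{*}\cong E_{\pm}$ is if anything more careful than the printed version.

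The difficulty is the reverse inclusion, which you correctly isolate as the assertion $a_{+}\circ a_{-}^{*}=0$ and correctly observe does not follow from the cancellation $a_{+}\circ a_{-}^{*}+a_{-}\circ a_{+}^{*}=0$. But the repair you sketch cannot be carried out: the test you propose is circular. Since $\ker(\rho)=\mathrm{Im}(\rho^{*})$ is maximally isotropic, membership of $a_{-}^{*}\xi$ in $\ker(\rho)$ is equivalent to its pairing with all of $\mathrm{Im}(\rho^{*})$ vanishing, and that pairing unwinds to $\langle a_{-}^{*}\xi,\rho^{*}\eta\rangle=\langle a_{-}^{*}\xi,a_{+}^{*}\eta\rangle=\eta\bigl(a_{+}(a_{-}^{*}\xi)\bigr)$, which is precisely the quantity whose vanishing you are trying to establish; no new information is extracted from the three isotropics. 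Worse, that quantity genuinely need not vanish: up to sign, $a_{+}\circ a_{-}^{*}$ is the bivector $\pi$ of equation \eqref{Poisson on M}, whose rank the paper itself records as $\dim\bigl(\rho(E_{+})\cap\rho(E_{-})\bigr)$. A dimension count confirms this is the only obstruction: $\dim\ker(a_{+})=n-\mathrm{rank}(a_{+})$ while $\dim\mathrm{Im}(a_{-}^{*})=\mathrm{rank}(a_{-})$, and surjectivity of $\rho$ gives $\mathrm{rank}(a_{+})+\mathrm{rank}(a_{-})=n+\dim\bigl(\rho(E_{+})\cap\rho(E_{-})\bigr)$, so the inclusion you proved is an equality exactly when $\rho(E_{+})\cap\rho(E_{-})=0$. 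That transversality holds in the para-holomorphic case but not for a general exact almost para-Hermitian algebroid (the symplectic double of a Poisson--Lie group is a counterexample in spirit). You should know that the paper's own proof has the same gap --- it establishes only the inclusion you established and then asserts the ``if and only if'' --- so your diagnosis of where the difficulty sits is sharper than the printed argument; but the plan you give for closing it would fail, because the missing half is false without the additional hypothesis.
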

\begin{proof}
Suppose that $E$ is exact, so that $\ker(\rho) =Im(\rho^{*})$. Explicitly, $\rho(e) = 0$ if and only if $e = \rho^{*}(\xi)$ for some $\xi \in\Gamma( T^{*}M)$. Suppose that $a_{\pm}(e^{\pm})=0$ for some $e^{\pm}\in \Gamma(E_{\pm})$. Since $e^{\pm}$ is a section of the eigenbundle, $a_{\pm}(e^{\pm}) = \rho(e^{\pm})$, and so $e^{\pm} = \rho^{*}(\xi)$ for some $\xi \in T^{*}M$. However, since $\rho = a_{+} + a_{-}$, we can decompose $e^{\pm} = a_{-}^{*}(\xi) + a_{+}^{*}(\xi)$. This tells us that $a_{\pm}^{*}(\xi) = 0$, as the image is in the opposite eigenbundle to $e^{\pm}$, and so we must conclude that $e^{\pm} = a_{\mp}^{*}(\xi)$. In conclusion, $a_{\pm}(e^{\pm}) =  0$ if and only if $e^{\pm} = a_{\mp}^{*}(\xi)$ for some $\xi \in \Gamma(T^{*}M)$, and so $\ker(a_{\pm}) = Im(a_{\mp}^{*})$.
\end{proof}
\subsection{ \normalsize Para-Holomorphic Algebroids}
\indent Recall that all para-holomorphic, para-Hermitian vector bundles in the traditional sense have constant transition functions. 
\begin{Def}
Let $E\rightarrow M$ be a para-Hermitian algebroid over a para-Hermitian manifold. We say that $E$ is  a para-holomorphic algebroid if $\rho\circ J_E = J_{TM}\circ \rho $.
\end{Def}
\indent One immediate consequence of the condition that the para-complex structures commute with $\rho$ is that the splitting of $\rho$ into $a_{+}+a_{-}$ has the property that the image of the maps land in their respective eigenbundles of the para-Hermitian structure on $M$. By this we mean that $a_{\pm} : E_{\pm}\rightarrow T^{\pm}M$. If we consider that $d_{\pm}f(e) = df(a_{\pm}(e))$, we are led to conclude that the image of $e$ under $a_{\pm}$ is in $T^{\pm}M$. In fact, the condition $d_{\pm}f(e) = \partial_{\pm}f(a_{\pm}(e))$ is equivalent to saying that $a_{\pm}$ is anchored in $T^{\pm}M$. Since $TM$ is para-Hermitian, we see that $T^{(1,0)}M \cong (T^{+}M)^{*}$ and $T^{(0,1)}M\cong (T^{-}M)^{*}$. This sets up two sequences:
\begin{center}
    \begin{tikzcd}
    0\arrow[r,""] & T^{(0,1)}M \arrow[r,"a_{-}^{*}"] & E_{+} \arrow[r,"a_{+}"] & T^{+}M\arrow[r,""]&0 ,
    \end{tikzcd}
    \begin{tikzcd}
    0\arrow[r,""] & T^{(1,0)}M \arrow[r,"a_{+}^{*}"] & E_{-} \arrow[r,"a_{-}"] & T^{-}M\arrow[r,""]&0 .
    \end{tikzcd}
\end{center}
We have the following.
\begin{prop}
Suppose that $E = E_{+}\oplus E_{-}$ is a para-holomorphic algebroid. Then $E$ is exact if and only if the previous two sequences are exact.
\end{prop}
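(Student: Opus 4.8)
The plan is to exploit the para-holomorphicity condition $\rho\circ J_E = J_{TM}\circ\rho$ to show that \emph{both} the anchor $\rho$ and its dual $\rho^{*}$ are block diagonal with respect to the eigenbundle decompositions $E = E_{+}\oplus E_{-}$ and $TM = T^{+}M\oplus T^{-}M$, and then to observe that exactness of a block-diagonal sequence is equivalent to exactness of each of its blocks. First I would record the consequence of para-holomorphicity already established in the preceding discussion: writing $\rho = a_{+}+a_{-}$ with $a_{\pm} = \rho\circ\pi_{E_{\pm}}$, the identity $\rho J_E = J_{TM}\rho$ forces $a_{\pm}(E_{\pm})\subset T^{\pm}M$. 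Hence for $e = e_{+}+e_{-}$ one has $\rho(e) = a_{+}(e_{+}) + a_{-}(e_{-})$, with the two summands lying in the complementary pieces $T^{+}M$ and $T^{-}M$; reading off components shows that $\rho$ is exactly the block-diagonal map $a_{+}\oplus a_{-}$. In particular $\ker\rho = \ker a_{+}\oplus\ker a_{-}$, and $\rho$ is surjective if and only if both $a_{+}$ and $a_{-}$ are.

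The key computation is the dualization. Using the pairing to identify $E_{+}^{*}\cong E_{-}$ and $E_{-}^{*}\cong E_{+}$, together with $(T^{+}M)^{*}\cong T^{(1,0)}M$ and $(T^{-}M)^{*}\cong T^{(0,1)}M$, I would decompose $\xi\in\Gamma(T^{*}M)$ as $\xi_{(1,0)}+\xi_{(0,1)}$ and compute, for $e = e_{+}+e_{-}$,
\begin{align*}
\langle\rho^{*}\xi, e\rangle = \xi(\rho e) = \xi_{(1,0)}(a_{+} e_{+}) + \xi_{(0,1)}(a_{-} e_{-}) = \langle a_{+}^{*}\xi_{(1,0)}, e_{+}\rangle + \langle a_{-}^{*}\xi_{(0,1)}, e_{-}\rangle,
\end{align*}
where I have used that $\xi_{(1,0)}$ annihilates $T^{-}M\ni a_{-}e_{-}$ and $\xi_{(0,1)}$ annihilates $T^{+}M\ni a_{+}e_{+}$. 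This identifies $\rho^{*}\xi = a_{-}^{*}\xi_{(0,1)} + a_{+}^{*}\xi_{(1,0)}$, so $\rho^{*}$ is again block diagonal, sending $T^{(0,1)}M$ into $E_{+}$ via $a_{-}^{*}$ and $T^{(1,0)}M$ into $E_{-}$ via $a_{+}^{*}$. Consequently $\rho^{*}$ is injective if and only if both $a_{-}^{*}$ and $a_{+}^{*}$ are, and $\operatorname{Im}\rho^{*} = \operatorname{Im}a_{-}^{*}\oplus\operatorname{Im}a_{+}^{*}$.

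With both $\rho$ and $\rho^{*}$ block diagonal, the three exactness conditions for $0\to T^{*}M\to E\to TM\to 0$—namely injectivity of $\rho^{*}$, the middle identity $\ker\rho=\operatorname{Im}\rho^{*}$, and surjectivity of $\rho$—split termwise into the corresponding conditions for the two half-sequences, since $\ker a_{+},\operatorname{Im}a_{-}^{*}\subset E_{+}$ while $\ker a_{-},\operatorname{Im}a_{+}^{*}\subset E_{-}$. This yields the desired equivalence; the forward implication of the middle identity, $\ker a_{\pm}=\operatorname{Im}a_{\mp}^{*}$, is moreover already Proposition \ref{Exact Hermitian}. The one point requiring care—and the main obstacle—is the dualization bookkeeping: one must track the pairing isomorphisms $E_{\pm}^{*}\cong E_{\mp}$ correctly so that $a_{-}^{*}$ genuinely lands in $E_{+}$ and $a_{+}^{*}$ in $E_{-}$ rather than the reverse, and one must justify that $\ker\rho$ decomposes as $\ker a_{+}\oplus\ker a_{-}$, which relies precisely on $a_{+}e_{+}$ and $a_{-}e_{-}$ lying in complementary summands of $TM$ so that no cancellation between the two eigen-components can occur.
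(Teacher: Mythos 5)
Your proof is correct and follows essentially the same route as the paper: both arguments reduce exactness of the sequence for $E$ to exactness of the two half-sequences by splitting $\rho$ and $\rho^{*}$ across the eigenbundle decompositions and invoking Proposition \ref{Exact Hermitian} for the middle condition. Your explicit dualization computation showing $\rho^{*}\xi = a_{-}^{*}\xi_{(0,1)} + a_{+}^{*}\xi_{(1,0)}$ is a welcome addition, as it makes rigorous the step where the paper asserts that $a_{\pm}^{*}(\xi)=0$ implies $\rho^{*}(\xi)=0$, which implicitly relies on $a_{\mp}^{*}$ annihilating the relevant summand of $T^{*}M$.
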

\begin{proof}
 If both of these sequences are exact, then we can simply note that taking the termwise direct sum gives us that $E$ is exact since each of the individual terms are vector bundles whose direct sums are the desired bundles in the sequence for $E$ and $\rho = a_+ + a_{-}$, so we focus on the other direction. Suppose that $E$ is exact. From Proposition \ref{Exact Hermitian}, we know that $\ker(a_{\pm}) = Im(a_{\mp}^{*})$, and so we simply need to check that $\ker(a_{\mp}^{*})=0$ and that the maps $a_{\pm}$ are surjective. Surjectivity follows from the fact that the images of $a_{\pm}$ are disjoint, and the map $\rho = a_{+}+a_{-}$ is surjective by exactness. Additionally, we know that $\ker(\rho^{*})=0$ by exactness, and so if $a_{\pm}^{*}(\xi) = 0$, then clearly $\rho^{*}(\xi)=0$ as well, and this completes the proof.
\end{proof}
\noindent We immediately have the following corollary.
\begin{Coro}
If $E$ is an exact para-holomorphic algebroid, then $\dim(E) = 4k$ for some $k\in \mathbb{N}$.  
\end{Coro}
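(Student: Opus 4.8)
The plan is to reduce the claim to an elementary rank count, using the two short exact sequences furnished by the preceding proposition. First I would record that the base $M$ is para-Hermitian, so $TM$ carries the para-complex structure $J_{TM}$ together with a compatible metric; applying Proposition \ref{Maximally Isotropic} to $TM$ then forces $\operatorname{rank}(TM) = 2n$ for some $n \in \mathbb{N}$, with each eigenbundle $T^{\pm}M$ of rank $n$. In particular $\dim M = 2n$ is even, and this is precisely where the para-Hermitian (as opposed to merely almost-product) hypothesis on $M$ enters.

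Next I would feed the hypothesis that $E$ is exact and para-holomorphic into the preceding proposition, which guarantees that the two sequences $0 \to T^{(0,1)}M \to E_+ \to T^+M \to 0$ and $0 \to T^{(1,0)}M \to E_- \to T^-M \to 0$ are exact. Combining this with the identifications $T^{(0,1)}M \cong (T^-M)^*$ and $T^{(1,0)}M \cong (T^+M)^*$ noted just before that proposition, I obtain $\operatorname{rank}\big(T^{(0,1)}M\big) = \operatorname{rank}\big(T^{(1,0)}M\big) = n$.

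A rank count on each short exact sequence then yields $\operatorname{rank}(E_+) = \operatorname{rank}\big(T^{(0,1)}M\big) + \operatorname{rank}(T^+M) = 2n$, and likewise $\operatorname{rank}(E_-) = 2n$. Since $E = E_+ \oplus E_-$, summing gives $\dim E = \operatorname{rank}(E_+) + \operatorname{rank}(E_-) = 4n$, so $\dim E = 4k$ with $k = n$, as claimed.

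The argument is purely a dimension count resting on results already in hand, so there is no substantive obstacle. The only delicate point is the first step — ensuring $\dim M$ is even — which hinges on the equal-rank condition built into the definition of a para-complex (hence para-Hermitian) structure on $TM$; dropping that assumption would invalidate the conclusion, so I would isolate it explicitly. I also note a shortcut that bypasses the eigenbundle sequences: exactness of $E$ alone gives $\dim E = 2\dim M$, which combined with $\dim M = 2n$ already produces $\dim E = 4n$. I would present the eigenbundle version since it fits the surrounding development, but would use this direct route as a sanity check.
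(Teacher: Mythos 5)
Your argument is correct and is precisely the rank count the paper intends: the corollary is stated as an immediate consequence of the exactness of the two eigenbundle sequences, which give $\operatorname{rank}(E_{\pm}) = 2n$ where $2n = \dim M$, hence $\dim E = 4n$. Your observation that the para-Hermitian hypothesis on $M$ is what forces $\dim M$ to be even, and your sanity check via $\dim E = 2\dim M$, are both consistent with the paper's (unwritten) reasoning.
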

Continuing, let $E$ be an exact almost para-Hermitian algebroid. We can relate the anchors $a_{\pm}$ of the eigenbundles $E_{\pm}$ to a natural bivector field on $M$. More concretely, there is a natural bivector on $M$ that is Poisson when $E$ is para-Hermitian, and whose vanishing is a requirement for $E\rightarrow M$ to be para-holomorphic. Consider the bivector $\pi \in \mathfrak{X}^2(M)$ given by
\begin{align}\label{Poisson on M}
    \iota_{\mu} \pi  = \rho (pr_{E_{-}}(\rho^{*}(\mu))) = -\rho (pr_{E_{+}}(\rho^{*}(\mu))),
\end{align}
where the second equality comes from the fact that $E$ is exact. It was shown in \cite{Courant Algebroid Splitting} that the rank of the bundle map $\pi^{\#}: T^{*}M\rightarrow TM$ is given by $\textrm{rank}(\pi^{\#}) = \dim(\rho(E_{+})\cap \rho(E_{-}))$ and so this bivector is identically $0$ when the images under the anchor $\rho$ of $E_{+}$ and $E_{-}$ are disjoint, as is the case when $E\rightarrow M$ is para-holomorphic. Further, they deduce that the obstruction to $\pi$ being a Poisson bivector is 
\begin{align}
\frac{1}{2}[\pi,\pi]_s &= \rho(\phi) + \rho (\psi),
\end{align}
where $\phi$ and $\psi$ are defined as in equation \eqref{almost Nijenhuis}, we have identified $E^{*}_{\pm}\cong E_{\mp}$ and $[\cdot,\cdot]_s$ is the Schouten bracket derived from the Lie bracket on $TM$. This allows us to state the following.
\begin{Thm}
Let $E$ be an exact almost para-Hermitian algebroid over a para-Hermitian manifold $M$. If $E$ is para-Hermitian, the bivector field $\pi$, as defined in equation \eqref{Poisson on M}, is Poisson. If $E\rightarrow M$ is para-holomorphic, then $\pi = 0$. 
\end{Thm}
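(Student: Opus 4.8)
The plan is to prove the two assertions separately, relying on the two facts imported from \cite{Courant Algebroid Splitting} that immediately precede the statement: first, that the Schouten self-bracket of $\pi$ satisfies $\tfrac{1}{2}[\pi,\pi]_s = \rho(\phi) + \rho(\psi)$, and second, that $\mathrm{rank}(\pi^{\#}) = \dim\big(\rho(E_+)\cap\rho(E_-)\big)$. The only additional inputs are the definitions of the obstruction tensors $\phi,\psi$ from equation \eqref{almost Nijenhuis} and the defining identity $\rho\circ J_E = J_{TM}\circ\rho$ for a para-holomorphic algebroid.

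For the first claim, I would observe that the para-Hermitian hypothesis is exactly what kills $\phi$ and $\psi$. By \eqref{almost Nijenhuis}, $\phi$ is determined by $\iota_{\sigma_1}\iota_{\sigma_2}\phi = \pi_{E_-}[\sigma_1,\sigma_2]_E$ for $\sigma_1,\sigma_2\in\Gamma(E_+)$, and $\psi$ analogously measures the failure of $E_-$ to close. Since para-Hermiticity means both eigenbundles are closed under the Courant bracket, $[\sigma_1,\sigma_2]_E$ stays in $E_+$, so its projection $\pi_{E_-}[\sigma_1,\sigma_2]_E$ vanishes; hence $\phi=0$, and symmetrically $\psi=0$. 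Substituting into the inherited obstruction formula gives $\tfrac{1}{2}[\pi,\pi]_s = \rho(0)+\rho(0)=0$, so $\pi$ is Poisson.

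For the second claim, the substantive step is a short eigenvalue computation showing that para-holomorphicity forces the anchored images into complementary eigenbundles. Taking $e^+\in\Gamma(E_+)$ so that $J_E e^+ = e^+$, the commutation identity yields $\rho(e^+) = \rho(J_E e^+) = J_{TM}\,\rho(e^+)$, which exhibits $\rho(e^+)$ as a $(+1)$-eigenvector of $J_{TM}$; thus $\rho(E_+)\subseteq T^+M$, and the same argument on $E_-$ gives $\rho(E_-)\subseteq T^-M$. Because $T^+M$ and $T^-M$ are the distinct eigenbundles of $J_{TM}$, we have $T^+M\cap T^-M = 0$, so $\rho(E_+)\cap\rho(E_-)$ is the zero section. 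The rank formula then forces $\mathrm{rank}(\pi^{\#})=0$, and hence $\pi=0$, as defined in \eqref{Poisson on M}.

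I do not expect a deep analytic obstacle here, since both halves reduce to invoking the quoted results correctly; the genuine content is the observation that para-holomorphicity splits the anchor as $a_\pm:E_\pm\to T^\pm M$. The one point requiring care is bookkeeping: I would want to confirm that $\phi$ and $\psi$ as defined via the Courant bracket on $E$ in \eqref{almost Nijenhuis}, under the identification $E_\pm^*\cong E_\mp$, are precisely the tensors entering the formula $\tfrac{1}{2}[\pi,\pi]_s=\rho(\phi)+\rho(\psi)$, so that the vanishing in the first part transfers without a sign or duality mismatch. This identification is already in place in the discussion preceding the statement, so the argument closes cleanly.
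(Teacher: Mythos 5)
Your proposal is correct and follows exactly the route the paper takes: the theorem is stated as an immediate consequence of the two quoted facts from Li-Bland--Meinrenken, with para-Hermiticity killing $\phi$ and $\psi$ in the obstruction formula and para-holomorphicity forcing $\rho(E_\pm)\subseteq T^\pm M$ so that the rank formula gives $\pi=0$. Both your eigenvalue computation and the vanishing of the projections $\pi_{E_\mp}[\cdot,\cdot]_E$ match the discussion the paper relies on in lieu of a written proof.
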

\indent We can now consider morphisms of para-Hermitian algebroids. In this case, the model is the morphism of para-Hermitian manifolds. A morphism of para-Hermitian manifolds is a smooth map $\Phi : (M,J_M)\rightarrow (N,J_N)$ such that $\Phi_*$ is an isometry, and $\Phi_{*}\circ J_M = J_N\circ \Psi_{*}$. Note that by the properties of pushforwards, $[\Phi_{*}(\cdot),\Phi_{*}(\cdot)]_N = \Phi_{*}[\cdot,\cdot]_M$. With this in mind, we can define a morphism of para-Hermitian algebroids.
\begin{Def}
Let $(E,J_E)$ and $(F,J_F)$ be para-Hermitian algebroids. A vector bundle map $\Psi:E\rightarrow F$ is a morphism of para-Hermitian algebroids if $\Psi$ is an isometry, para-holomorphic (meaning $\Psi \circ J_E = J_F\circ \Psi$) and preserves the Courant bracket.
\end{Def}
It is important to note here that the definition of a para-Hermitian algebroid morphism is less strict than a morphism of Courant algebroids because we do not require that these maps preserve the anchor. We will see in Example \ref{Para-Hermitian Lie Groups} that these maps can allow us to construct interesting examples in cases where Courant algebroid morphisms are too strict of a condition. We can introduce a notion of a para-holomorphic algebroid morphism as well.
\begin{Def}
Let $(E,J_F)$ and $(F,J_F)$ be para-holomorphic algebroids over $(M,g_1,J_1)$ and $(N,g_2,J_2)$ respectively. Suppose that $\Psi:E\rightarrow F$ is a vector bundle morphism covering the diffeomorphism $\psi : M\rightarrow N$. We say that $\Psi$ is a morphism of para-holomorphic algebroids if $\Psi$ is a para-Hermitian algebroid morphism, and the diffeomorphism $\psi : M\rightarrow N$ is a para-holomorphic isometry. 
\end{Def}
\subsection{ \normalsize Connections on Para-Hermitian Algebroids}
Bressler and Chervov give a notion of a connection on a Courant algebroid in \cite{Bressler} that will be of use to us.
\begin{Def}\label{Connection}
Let $E$ be a Courant algebroid. A connection on $E$ is a vector bundle morphism $A: TM\rightarrow E$ that satisfies:
\begin{enumerate}
    \item $\rho \circ A = Id_{TM}$.
    \item $\langle A(v_1), A(v_2) \rangle = 0$ for all $v_1,v_2\in TM$.
\end{enumerate}
\end{Def}
\indent In the context of exact Courant algebroids, connections are important in that they provide a decomposition $E = \rho^{*}(T^{*}M)\oplus A(TM)$. One can see that the dimensions work out, as $\rho^{*}$ and $A$ are injective (by exactness and $\rho \circ A = Id_{TM}$, respectively). Finally, we see that $\ker(\rho) = Im(\rho^{*})$, but $\rho \circ A = Id_{TM}$, and so we can conclude that these subbundles are disjoint. We can define the para-complex structure $K$ with eigenbundles $A(TM)$ and $\rho^{*}(T^{*}M)$. This para-complex structure is always half integrable precisely because $E$ is an exact Courant algebroid. We know that $\ker(\rho) = Im(\rho^{*})$, and additionally
\begin{align*}
    \rho([\rho^{*}(\xi),\rho^{*}(\eta)])= [\rho(\rho^{*}(\xi)),\rho (\rho^{*}(\eta))]=0,
\end{align*}
so $[\rho^{*}(\xi),\rho^{*}(\eta)] = \rho^{*}(\zeta)$. This is exactly the condition needed for $\rho^{*}(T^*M) $ to be closed under the bracket on $E$. We can now consider the related concept of curvature for these connections. 
\begin{Def}\label{Curvature}
Let $A:TM \rightarrow E$ be a connection. The curvature of $A$ is a map $R: TM \times TM \rightarrow E$ defined by 
\begin{align}
    R(v_1,v_2) = [A(v_1),A(v_2)]_{E} - A([v_1,v_2]).
\end{align}
In short, $R$ is the measure of the failure of the connection to preserve the bracket on $TM$ and $E$.
\end{Def}
One then has the following theorem that relates the integrability of the para-complex structure $K$ on an exact Courant algebroid $E$ to this concept of curvature. 
\begin{Thm}\label{Standard para-complex structure}
Let $E$ be an exact courant algebroid, and let $A:TM \rightarrow E$ be a connection on $E$. Then $E = A(TM)\oplus \rho^{*}(T^{*}M)$ has the structure of a half integrable para-complex algebroid, with $K(A(v)) = A(v)$ and $K(\rho^{*}(\xi)) = -\rho^{*}(\xi)$. We call $K$ the \textit{standard para-complex structure with respect to $A$} on $E$. The para-complex structure on $E$ is fully integrable if and only if the connection on $E$ is flat. \end{Thm}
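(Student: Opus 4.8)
The plan is to take the structural claims largely for granted, since they were essentially established in the discussion preceding the statement, and to concentrate on the integrability equivalence. First I would record that $K^2=\mathrm{Id}_E$ holds by construction, and since $\rho^*$ and $A$ are injective with $A(TM)\cap\rho^*(T^*M)=0$, both eigenbundles have rank $n=\dim M$; hence $K$ is a genuine para-complex structure and $E$ has rank $2n$. The half-integrability was already argued: because $\ker\rho=\mathrm{Im}(\rho^*)$ and $\rho$ intertwines the brackets, one has $[\rho^*\xi,\rho^*\eta]_E\in\rho^*(T^*M)$, so the $-1$-eigenbundle is automatically closed. Consequently full integrability of $K$ is equivalent to closure of the $+1$-eigenbundle $A(TM)$ under the Courant bracket, and this is the only statement that must be tied to flatness.

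The key observation is that the curvature $R$ always takes values in $\ker\rho$. Applying $\rho$ to $R(v_1,v_2)=[A(v_1),A(v_2)]_E-A([v_1,v_2])$ and using anchor compatibility ($\rho[e_1,e_2]=[\rho(e_1),\rho(e_2)]$) together with $\rho\circ A=\mathrm{Id}_{TM}$ gives $\rho(R(v_1,v_2))=[v_1,v_2]-[v_1,v_2]=0$. By exactness $\ker\rho=\mathrm{Im}(\rho^*)=\rho^*(T^*M)$, so $R(v_1,v_2)\in\rho^*(T^*M)$ for every $v_1,v_2$. Along the way I would also verify, using the Leibniz rule for the Courant bracket and the isotropy $\langle A(v_1),A(v_2)\rangle=0$, which kills the anomalous $\beta^{-1}d_Ef$ term, that $R$ is $C^\infty(M)$-linear in each argument; this makes flatness ($R=0$) a well-defined pointwise condition.

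The equivalence then drops out of the transversality of the decomposition. If $A$ is flat, then $R=0$ forces $[A(v_1),A(v_2)]_E=A([v_1,v_2])\in A(TM)$, so $A(TM)$ is closed and $K$ is integrable. Conversely, if $A(TM)$ is closed, then $[A(v_1),A(v_2)]_E\in A(TM)$, whence $R(v_1,v_2)\in A(TM)$ as well; but the key observation gives $R(v_1,v_2)\in\rho^*(T^*M)$, and since $A(TM)\cap\rho^*(T^*M)=0$ we conclude $R(v_1,v_2)=0$, i.e. $A$ is flat. I expect no genuine obstacle here: the only mildly technical point is the Leibniz computation establishing tensoriality of $R$, where one must remember to use the isotropy of the connection to discard the derivative term; everything else is bookkeeping with the anchor and the transversality of the two eigenbundles.
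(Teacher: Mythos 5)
Your proposal is correct and follows essentially the same route as the paper: both reduce the theorem to closure of $A(TM)$ (the $-1$-eigenbundle being automatically closed by exactness), and both derive the equivalence with flatness by applying $\rho$ to $[A(v_1),A(v_2)]$ and using $\rho\circ A=\mathrm{Id}_{TM}$ together with anchor-compatibility of the bracket. Your phrasing via ``$R$ is valued in $\ker\rho=\rho^{*}(T^{*}M)$, hence vanishes when it also lies in $A(TM)$'' is just a transversality restatement of the paper's ``applying $\rho$ forces $v_3=[v_1,v_2]$,'' and your added check that $R$ is tensorial (using isotropy of $A(TM)$ to kill the $\beta^{-1}d_Ef$ term) is a harmless refinement the paper omits.
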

\begin{proof}
It is clear that since $K$ is half integrable, one would only need to check that $A(TM)$ is closed under the bracket if and only if $A$ is flat. For this, we suppose that $v_1, v_2\in \Gamma(TM)$. Then $A(TM)$ is closed if and only if there exists $v_3\in \Gamma(TM)$ such that 
\begin{align*}
[A(v_1),A(v_2)] &= A(v_3).
\end{align*}
We note that $\rho$ preserves the bracket and that $\rho \circ A = Id_{TM}$, and so by applying $\rho$ to both sides, we find that $v_3 = [v_1,v_2]$. This is precisely the condition that $A$ is flat. As for the fact that $K$ is compatible with the metric, we find
\begin{align*}
    \langle K(A(v)+ \rho^{*}(\xi)) , K(A(w) + \rho^{*}(\eta))\rangle  &= -\langle A(v)+ \rho^{*}(\xi) , A(w) + \rho^{*}(\eta)\rangle,
    \\
    \langle A(v)-\rho^{*}(\xi), A(w)-\rho^{*}(\eta)\rangle  &= - \langle A(v)+\rho^{*}(\xi),A(w)\rangle ,
    \\
    \langle A(v),A(w)\rangle  + \langle \rho^{*}(\xi), \rho^{*}(\eta)\rangle &= -\langle A(v),A(w)\rangle  - \langle \rho^{*}(\xi), \rho^{*}(\eta)\rangle,
    \\
    \langle \rho^{*}(\xi),\rho^{*}(\eta)\rangle &=-\langle \rho^{*}(\xi),\rho^{*}(\eta)\rangle.
\end{align*}
Finally we see that $\langle \rho^{*}(\xi),\rho^{*}(\eta)\rangle =0$ for all $\xi,\eta \in \Gamma(T^{*}M)$, as $\Im(\rho^{*})$ isotropic by exactness.
\end{proof}
\begin{Example}
Consider the standard Courant algebroid $E = TM\oplus T^{*}M$ for some smooth manifold $M$, with $\rho : E\rightarrow TM$ given by projection on the first factor. Then $\rho^{*}$ is closed under this bracket in the trivial sense, as $[\xi, \eta ]= 0$. In this case, $Im(\rho^{*})$ is isotropic as well. One can check rather easily that all connections on $E$ are in the form $A(X) = X \oplus \tilde{\omega}(X)$ for some $\omega \in \Omega^2(M)$. This is the graph of a $2$-form in $E$, and so by the famous result that the Dirac subbundles of $T\oplus T^{*}$ are closed 2-forms, $(E,K)$ is a para-Hermitian algebroid (i.e., the image of the connection is integrable) if and only if $\omega$ is a closed 2-form. So, for $E = TM\oplus T^{*}M$, para-Hermitian algebroid structures with one eigenbundle equal to $Im(\rho^{*})$ are in one-to-one correspondence with closed $2$-forms.
\end{Example}
\indent Para-complex and para-Hermitian algebroids are more general than connections, but para-Hermitian structures with one eigenbundle equal to $\rho^{*}(T^{*}M)$ are in one-to-one correspondence with connections on $E$, as we justify in the following proposition.
\begin{prop}
Given an exact Courant algebroid, we can make a choice of subbundle $H$ such that $E = \rho^{*}(T^{*}M)\oplus H$. Then $\rho\vert_{H}:H\rightarrow TM$ is an isomorphism. If $H$ is also isotropic (meaning this is a half integrable para-Hermitian algebroid), then $H$ defines a connection on $E$ by $\rho^{-1}: TM \rightarrow H \hookrightarrow E$. 
\end{prop}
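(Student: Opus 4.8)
The plan is to first establish that $\rho|_H : H \to TM$ is an isomorphism using a rank count together with the exactness of $E$, and then to verify that its inverse (post-composed with the inclusion into $E$) satisfies the two defining properties of a connection from Definition \ref{Connection}. First I would note that exactness forces $\ker(\rho^{*}) = 0$, so $\rho^{*}$ is injective and $\rho^{*}(T^{*}M)$ has rank $n = \dim M$; since $\dim E = 2n$, the complementary summand $H$ has rank $n = \dim TM$. To see that $\rho|_H$ is an isomorphism it suffices to check injectivity: if $h \in H$ satisfies $\rho(h) = 0$, then $h \in \ker(\rho) = Im(\rho^{*})$ by exactness, while $H \cap \rho^{*}(T^{*}M) = \{0\}$ because the decomposition $E = \rho^{*}(T^{*}M) \oplus H$ is direct, so $h = 0$. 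An injective bundle map between bundles of equal rank is an isomorphism. (Equivalently, surjectivity follows directly: any $v \in TM$ lifts to some $e \in E$ since $\rho$ is onto, and writing $e = \rho^{*}(\xi) + h$ and applying $\rho$ annihilates the first term because $\rho \circ \rho^{*} = 0$, leaving $\rho(h) = v$.)

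With $\rho|_H$ invertible, I would set $A := (\rho|_H)^{-1}$ followed by the inclusion $H \hookrightarrow E$; this is a vector bundle morphism $TM \to E$ as a composite of an isomorphism and an inclusion. The first connection axiom $\rho \circ A = Id_{TM}$ is then immediate from the construction, since for $v \in TM$ we have $A(v) = (\rho|_H)^{-1}(v) \in H$ and applying $\rho$ recovers $v$. The second axiom $\langle A(v_1), A(v_2)\rangle = 0$ holds precisely because $A(v_1), A(v_2) \in H$ and $H$ is isotropic by hypothesis. Thus $A$ is a connection in the sense of Definition \ref{Connection}, completing the proof.

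There is no genuine obstacle here; the only point requiring care is keeping the two hypotheses in their proper roles. Exactness is used twice and for distinct purposes: once as $\ker(\rho^{*}) = 0$ to pin down the rank of $H$, and once as $\ker(\rho) = Im(\rho^{*})$ to run the injectivity argument. The isotropy of $H$ is an additional hypothesis invoked only for the second connection axiom. Worth emphasizing is that no part of the argument uses the Courant bracket, consistent with the statement being purely about the underlying bundle and bilinear-form structure.
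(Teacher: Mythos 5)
Your argument is correct. Note that the paper states this proposition without any proof at all, so there is nothing to compare it against; your write-up supplies the missing argument, and it is the natural one. It is also essentially the reverse of the paper's earlier discussion (just before Theorem \ref{Standard para-complex structure}) showing that a connection $A$ yields the decomposition $E = \rho^{*}(T^{*}M)\oplus A(TM)$ via injectivity of $\rho^{*}$ and $A$ and the identity $\ker(\rho)=Im(\rho^{*})$ --- you use exactly those two facts, once to pin down the rank of $H$ and once to get injectivity of $\rho\vert_{H}$, and you correctly isolate the isotropy of $H$ as the sole input for the second connection axiom. Your closing remark that the Courant bracket plays no role is accurate and worth keeping.
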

\subsection{ \normalsize Para-Complex Connections}
\indent We can now move on to the more general case of a para-complex structure that does not necessarily come from a connection or $Im(\rho^{*})$. Let $E\rightarrow M$ be an almost para-Hermitian algebroid over an almost para-Hermitian manifold $M$. We say that a connection $A$ on $E$ is para-complex if $J_E \circ A = A\circ J_{TM}$. If this is the case, we immediately arrive at the following.
\begin{Lem}
If $A$ is a para-complex connection, then $A$ maps the $\pm 1$-eigenspace of $J_{TM}$ into the $\pm 1$-eigenspace of $J_{E}$. 
\end{Lem}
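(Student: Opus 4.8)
The plan is to exploit the defining relation $J_E \circ A = A\circ J_{TM}$ directly on eigenvectors, since the statement is purely about how $A$ intertwines the two para-complex structures. First I would take an arbitrary vector $v$ in the $+1$-eigenspace $T^{+}M$, so that $J_{TM}(v) = v$, and evaluate $J_E(A(v))$ using the intertwining hypothesis. Substituting $J_{TM}(v)=v$ into $J_E(A(v)) = A(J_{TM}(v))$ gives $J_E(A(v)) = A(v)$, which says precisely that $A(v)$ is a $+1$-eigenvector of $J_E$, i.e. $A(v)\in E_{+}$.

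The second step is the identical computation for the $-1$-eigenspace: for $v\in T^{-}M$ we have $J_{TM}(v) = -v$, so $J_E(A(v)) = A(J_{TM}(v)) = A(-v) = -A(v)$, placing $A(v)$ in $E_{-}$. Because $A$ is a vector bundle morphism it is fiberwise linear, and since $TM = T^{+}M\oplus T^{-}M$, these two computations together pin down the behaviour of $A$ on all of $TM$, so the lemma follows.

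I expect essentially no obstacle here: the conclusion is a formal consequence of the eigenspace definitions and the single hypothesis $J_E\circ A = A\circ J_{TM}$, and none of the connection axioms from Definition \ref{Connection} (the splitting $\rho\circ A = Id_{TM}$ or isotropy of the image) are needed. The only point worth emphasizing is conceptual rather than technical, namely that commuting with $J_{TM}$ is equivalent to respecting its $\pm1$-eigenspace grading; the lemma is really a translation of the para-complex condition into eigenbundle language.
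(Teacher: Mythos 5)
Your proposal is correct and matches the paper's proof exactly: both apply the intertwining relation $J_E\circ A = A\circ J_{TM}$ to an eigenvector $v\in T^{\pm}M$ to get $J_E(A(v)) = A(\pm v) = \pm A(v)$. Your additional remarks about linearity and the eigenspace decomposition are fine but not needed beyond what the paper writes.
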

\begin{proof}
Suppose that $A$ is a para-complex connection on $E\rightarrow M$. Then for $v\in T^{\pm}M$, 
\begin{align*}
J_{E}(A(v)) &= A(J_{TM}(v)) = A(\pm v) = \pm A(v).
\end{align*}
\end{proof}
One immediately finds that if $A$ preserves the bracket, there is a sense in which the structure of the underlying almost para-Hermitian manifold is determined by the structure of the almost para-Hermitian algebroid that covers it. We can formalize this in the following theorem.
\begin{Thm}
 Let $E$ be a para-Hermitian algebroid over an almost para-Hermitian manifold $M$.  Then $M$ is para-Hermitian if $E$ admits a flat para-complex connection.
\end{Thm}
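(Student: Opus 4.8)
The plan is to transfer the integrability of the eigenbundles of $E$ down to $TM$, using the connection as a bridge. The essential inputs are the preceding Lemma, which tells us that a para-complex connection satisfies $A(T^{\pm}M)\subseteq E_{\pm}$; flatness, which by Definition \ref{Curvature} means $[A(v_1),A(v_2)]_E = A([v_1,v_2])$ for all $v_1,v_2\in\Gamma(TM)$; and the injectivity of $A$, which follows immediately from $\rho\circ A = \mathrm{Id}_{TM}$ (if $A(v)=0$ then $v=\rho(A(v))=0$). I would aim to show that $T^{+}M$ and $T^{-}M$ are each closed under the Lie bracket, so that by Theorem \ref{Classical Decomposition of Forms} the almost para-complex structure $J_{TM}$ is a genuine para-complex structure; combined with the compatible metric already present in the almost para-Hermitian hypothesis, this makes $M$ para-Hermitian.

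First I would take arbitrary sections $X,Y\in\Gamma(T^{+}M)$ and lift them to $A(X),A(Y)$, which lie in $\Gamma(E_{+})$ by the preceding Lemma. Since $E$ is a para-Hermitian algebroid, $E_{+}$ is closed under the Courant bracket, so $[A(X),A(Y)]_E\in\Gamma(E_{+})$. Flatness of $A$ then gives $A([X,Y]) = [A(X),A(Y)]_E \in \Gamma(E_{+})$. The crux is now to descend this membership back to $TM$: because $A([X,Y])$ lies in the $+1$-eigenbundle $E_{+}$, applying $J_E$ fixes it, while para-complexity of $A$ rewrites $J_E(A([X,Y])) = A(J_{TM}([X,Y]))$. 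Hence $A(J_{TM}([X,Y])) = A([X,Y])$, and since $A$ is injective we conclude $J_{TM}([X,Y]) = [X,Y]$, i.e. $[X,Y]\in\Gamma(T^{+}M)$.

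The identical argument applied to sections of $T^{-}M$, using closure of $E_{-}$ under the Courant bracket and the $-1$-eigenbundle condition, shows that $T^{-}M$ is also closed under the Lie bracket. With both eigenbundles integrable, Theorem \ref{Classical Decomposition of Forms} (equivalently the para-complex Newlander--Nirenberg theorem) gives that $N^{J_{TM}}=0$, so $J_{TM}$ is a para-complex structure; the compatibility $g(J_{TM}\cdot,J_{TM}\cdot)=-g(\cdot,\cdot)$ already assumed then upgrades $(M,g,J_{TM})$ to a para-Hermitian manifold.

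I do not expect a serious obstacle, since flatness is engineered precisely to identify $A([X,Y])$ with the Courant bracket of the lifts. The one point deserving care is the descent step: it relies on using \emph{both} the eigenbundle-preservation $A(T^{\pm}M)\subseteq E_{\pm}$ from the Lemma (to know $A([X,Y])$ is $J_E$-fixed) and the injectivity of $A$ (to transfer the resulting eigenvalue equation from $E$ back to $TM$). It is worth confirming that these two facts together genuinely force $[X,Y]$ into the correct eigenbundle, rather than merely constraining its image inside $E$, which is exactly what the chain $A(J_{TM}([X,Y]))=A([X,Y])$ followed by injectivity accomplishes.
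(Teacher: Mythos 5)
Your proposal is correct and follows essentially the same route as the paper: lift sections of $T^{\pm}M$ via $A$, use flatness and closure of $E_{\pm}$ under the Courant bracket to place $A([X,Y])$ in the correct eigenbundle, and then use para-complexity of $A$ together with injectivity of $A$ (from $\rho\circ A = \mathrm{Id}_{TM}$) to descend the eigenvalue equation to $TM$. The paper's proof is the same chain of identities written in the opposite order, so there is nothing to add.
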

\begin{proof}
To begin, suppose that $E$ is a para-Hermitian algebroid, so that both of its eigenbundles are closed under the Courant bracket. Recall that the image of $T^{\pm}M$ under $A$ is in $E_{\pm}$. From this, we can see that for $v_1,v_2\in \Gamma(T^{\pm}M)$, 
\begin{align*}
    A(J_{TM}[v_1,v_2]) &= J_{E}(A([v_1,v_2]))
    \\
    &= J_E([A(v_1),A(v_2)]_{E})
    \\
    &= \pm [A(v_1),A(v_2)]
    \\
    &= A(\pm [v_1,v_2]).
\end{align*}
We can now use the fact that $\rho\circ A = Id_{TM}$, so that $J_{TM}[v_1,v_2] = \pm [v_1,v_2]$, meaning $[v_1,v_2]\in \Gamma(T^{\pm}M)$. Therefore, the eigenbundles of $J_{TM}$ are closed under the Lie bracket, and $M$ is a para-Hermitian manifold. 
\end{proof}
If $A$ is para-complex we can decompose $A$ into two bundle maps $A = A_{+}+ A_{-}$, given by $A_{\pm} : T^{\pm}M \rightarrow E_{\pm}$. One immediately has the identity $a_{\pm}\circ A_{\pm} = Id_{T^{\pm}M}$. Further, we know that the image of $A$, and both $E_{\pm}$ are isotropic, and so given two $v,w\in \Gamma(TM)$, decomposed by $v= v_{+}+v_{-}$, $w= w_{+}+w_{-}$ in the usual way, we have
\begin{align*}
    \langle A_{+}(v_{+}), A_{-}(w_{-})\rangle &= -\langle A_{-}(v_{-}), A_{+}(w_{+})\rangle .
\end{align*}
In any case, the left and right hand sides of this equation are completely independent, and so we can conclude that the images of $A_{+}$ and $A_{-}$ are orthogonal under this metric. This means that $E_{+}$ contains a subbundle, $A_{+}(T^{+}M)$, that is orthogonal to another subbundle of $E_{-}$, namely $A_{-}(T^{-}M)$. From the fact that $a_{\pm}\circ A_{\pm} = Id_{T^{\pm}M}$, we can conclude that the maps $A_{\pm}$ are injective, and hence bijective onto their image. We can once again return to the concept of the exact para-holomorphic algebroid. Given a connection $A$ on $E$, one can construct two complimentary diagrams:
\begin{center}
    \begin{tikzcd}[row sep=large, column sep=small]
    0&\arrow[l,""] T^{-}M&\arrow[l,"a_-"]E_-  &\arrow[l,"a_+^{*}"] T^{(1,0)}M\arrow[l,""] &\arrow[l,""] 0 \arrow[r,""] &T^{(0,1)}M \arrow[r,"a_-^{*}"] & E_{+}\arrow[r,"a_{+}"] & T^{+}M\arrow[r,""] & 0
    \\
     & &  T^{-}M\arrow[u,"A_{-}"]\arrow[ul,"Id"] &  &   & &T^{+}M\arrow[u,"A_{+}"]\arrow[ur,"Id"] &  &
    \end{tikzcd}.
\end{center}
We know from exactness that $\ker(a_{\pm}) = Im(a_{\mp}^{*})$ and $a_{\pm} \circ A_{\pm} = Id_{TM}$, and so we can conclude that the images of $a_{\mp}^{*}$ and $A_{\pm}$ are disjoint. Further, we know that both maps $a_{\pm}^{*}$ and $A_{\pm}$ are injective. This implies that the eigenbundles admit a direct sum decomposition into two half-dimensional subbundles. We arrive at a series of interesting conclusions.
\begin{Thm}\label{Main Thm}
If $(E,J)$ is an exact para-Hermitian algebroid over a para-Hermitian manifold, and admits a flat para-complex connection $A$, then $E$ is a para-holomorphic algebroid. 
\end{Thm}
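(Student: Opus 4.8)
The statement to prove is the single commuting relation $\rho\circ J_E = J_{TM}\circ\rho$: indeed $E$ is para-Hermitian by hypothesis, and the preceding theorem (that a para-Hermitian algebroid carrying a flat para-complex connection must lie over a para-Hermitian base) guarantees that $M$ is para-Hermitian, so that $T^{+}M$ and $T^{-}M$ are genuine integrable eigenbundles and the very notion of ``para-holomorphic algebroid'' applies. The plan therefore has two stages: first observe that flatness is used only to supply this para-Hermitian base, and then prove the commuting relation, where the actual work lies.

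I would begin by reducing the commuting relation to a statement about the anchor alone. Since $J_E$ acts as $\pm Id$ on $E_{\pm}$ and $\rho$ restricted to $E_{\pm}$ equals $a_{\pm}$, for $e = e_{+}+e_{-}$ one has $\rho(J_E e) = a_{+}(e_{+}) - a_{-}(e_{-})$ whereas $J_{TM}(\rho e) = J_{TM}(a_{+}e_{+}) + J_{TM}(a_{-}e_{-})$. These agree for all $e$ exactly when $a_{\pm}(E_{\pm})\subseteq T^{\pm}M$, i.e. when the anchor respects the bigrading. Thus the theorem is equivalent to showing that each restricted anchor lands in the matching eigenbundle of $TM$.

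To establish $a_{\pm}(E_{\pm})\subseteq T^{\pm}M$ I would play the connection off against exactness. Decomposing the para-complex connection as $A = A_{+}+A_{-}$ with $A_{\pm}\colon T^{\pm}M\to E_{\pm}$ (by the lemma that a para-complex connection carries eigenbundles to eigenbundles) and using $a_{\pm}\circ A_{\pm}=Id_{T^{\pm}M}$, the image of $a_{\pm}$ already contains $T^{\pm}M$, so $\textrm{rank}(a_{\pm})\ge k$, where $k = \textrm{rank}(T^{\pm}M)$. On the other hand, Proposition \ref{Exact Hermitian} gives $\ker(a_{\pm}) = Im(a_{\mp}^{*})$ for the exact algebroid; since $\textrm{rank}(a_{\mp}^{*}) = \textrm{rank}(a_{\mp})$, rank--nullity yields $\textrm{rank}(a_{+}) + \textrm{rank}(a_{-}) = 2k$. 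Combined with the two lower bounds this forces $\textrm{rank}(a_{\pm}) = k$, whence $a_{\pm}(E_{\pm}) = T^{\pm}M$ exactly. Equivalently and more concretely, one uses the splitting $E_{\pm} = A_{\pm}(T^{\pm}M)\oplus \ker(a_{\pm})$ and observes that any $e_{\pm} = A_{\pm}(v) + \kappa$ with $\kappa\in\ker(a_{\pm})$ has $a_{\pm}(e_{\pm}) = v\in T^{\pm}M$.

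The crux, and the step I expect to be the main obstacle, is pinning the image of the anchor on each eigenbundle: a priori $a_{\pm}$ could send $E_{\pm}$ onto something strictly larger than $T^{\pm}M$, and nothing about the para-complex condition alone rules this out. The resolution is the tension between the connection, which bounds $\textrm{rank}(a_{\pm})$ below by supplying the section $A_{\pm}$, and exactness through Proposition \ref{Exact Hermitian}, which bounds the total rank above via $\ker(a_{\pm}) = Im(a_{\mp}^{*})$. A slightly more conceptual alternative would avoid the rank count by showing directly that $\ker\rho = \rho^{*}(T^{*}M)$ is $J_E$-invariant: writing $\rho^{*}\xi = a_{-}^{*}\xi + a_{+}^{*}\xi$ with $a_{-}^{*}\xi\in E_{+}$ and $a_{+}^{*}\xi\in E_{-}$, Proposition \ref{Exact Hermitian} shows that each summand lies in $\ker\rho$, so $J_E\rho^{*}\xi = a_{-}^{*}\xi - a_{+}^{*}\xi\in\ker\rho$; as $\rho\circ J_E - J_{TM}\circ\rho$ already vanishes on $A(TM)$ by the para-complex condition together with the splitting $E = A(TM)\oplus\rho^{*}(T^{*}M)$, this invariance finishes the proof.
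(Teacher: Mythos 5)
Your proposal is correct and takes essentially the same route as the paper: the paper's entire proof is the remark that the theorem ``follows directly from the previous discussion,'' namely the splitting $A = A_{+}+A_{-}$ with $a_{\pm}\circ A_{\pm}=Id_{T^{\pm}M}$ combined with Proposition \ref{Exact Hermitian}, which are precisely the ingredients you deploy. Your rank count (and your alternative via the $J_E$-invariance of $\ker\rho = Im(\rho^{*})$) makes explicit the one step the paper leaves implicit --- that $Im(a_{\pm})$ cannot be strictly larger than $T^{\pm}M$ --- so your write-up is, if anything, more complete than the original.
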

\noindent This follows directly from the previous discussion. This structure allows us to split each eigenbundle into the direct sum of two distinct subbundles coming from the para-holomorphic algebroid structure and the para-complex connection. We summarize this in the following theorem. 
\begin{Thm}
Suppose that $E$ is an exact para-Hermitian algebroid over a para-Hermitian manifold. If $E$ admits a para-complex connection $A: TM\rightarrow E$, then the eigenbundles of the para-complex structure on $E$ admit the following decomposition.
\begin{align*}
    E_{+} &= a_{-}^{*}(T^{(0,1)}M)\oplus A_{+}(T^{+}M),
    \\
    E_{-} &= a_{+}^{*}(T^{(1,0)}M)\oplus A_{-}(T^{-}M).
\end{align*}
\end{Thm}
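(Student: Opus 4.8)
The plan is to prove the two displayed decompositions by the same bookkeeping argument applied to each eigenbundle; by the symmetry $+\leftrightarrow -$ in the para-complex connection (which splits as $A_\pm : T^\pm M \to E_\pm$ with $a_\pm \circ A_\pm = \mathrm{Id}_{T^\pm M}$) it suffices to treat $E_+$, and $E_-$ then follows verbatim. Throughout I work in the para-holomorphic setting supplied by Theorem \ref{Main Thm}, so that the exact sequence $0 \to T^{(0,1)}M \xrightarrow{a_-^*} E_+ \xrightarrow{a_+} T^+M \to 0$ is available; this is precisely what makes the first summand $a_-^*(T^{(0,1)}M)$ a well-defined subbundle of $E_+$ in the first place.

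First I would check that the two proposed summands are subbundles of $E_+$ of the expected ranks. Writing $\dim M = 2n$, the identification $T^{(0,1)}M \cong (T^-M)^*$ together with the equal-rank condition gives $\textrm{rank}(T^{(0,1)}M) = \textrm{rank}(T^+M) = n$. The map $a_-^*$ is injective on $T^{(0,1)}M$ by exactness of the sequence above, and $A_+$ is injective because $a_+ \circ A_+ = \mathrm{Id}$; both are bundle maps of locally constant rank, so $a_-^*(T^{(0,1)}M)$ and $A_+(T^+M)$ are genuine rank-$n$ subbundles of $E_+$.

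Next I would establish that these subbundles meet only in the zero section, which is the disjointness already observed before the theorem. If $x = a_-^*(\xi) = A_+(v)$, then $x \in \operatorname{Im}(a_-^*) = \ker(a_+)$ forces $0 = a_+(x) = a_+(A_+(v)) = v$, whence $x = 0$. Finally, a rank count closes the argument: since $E$ is exact, $\textrm{rank}(E) = 2\dim M = 4n$, and equal eigenbundle ranks give $\textrm{rank}(E_+) = 2n$. Two disjoint rank-$n$ subbundles of a rank-$2n$ bundle must span it, so $E_+ = a_-^*(T^{(0,1)}M) \oplus A_+(T^+M)$, and the analogous computation for $E_-$ yields the second identity.

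The computational content is minimal; the one place warranting care is ensuring the decomposition is a splitting of vector bundles rather than a merely pointwise one. Since $a_-^*$ and $A_+$ are smooth bundle maps of locally constant rank, their images are smooth subbundles and the direct-sum decomposition is smooth, so I expect no substantive obstacle — the theorem is genuinely a summary that assembles the injectivity, exactness, and disjointness facts recorded above.
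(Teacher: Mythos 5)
Your argument is correct and is essentially the paper's own: the theorem is presented there as a summary of the immediately preceding discussion, which records exactly the ingredients you assemble --- injectivity of $A_{\pm}$ from $a_{\pm}\circ A_{\pm}=\mathrm{Id}_{T^{\pm}M}$, injectivity of $a_{\mp}^{*}$, disjointness of the two images via $\ker(a_{\pm})=\mathrm{Im}(a_{\mp}^{*})$, and the resulting split of each eigenbundle into two half-rank subbundles. Your explicit appeal to the para-holomorphic setting (so that the sequence $0\to T^{(0,1)}M\xrightarrow{a_{-}^{*}}E_{+}\xrightarrow{a_{+}}T^{+}M\to 0$ is exact and $a_{-}^{*}$ is injective on $T^{(0,1)}M$) is the correct reading of the hypotheses, since the paper tacitly uses the same exact sequences in the surrounding discussion.
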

Now, this decomposition could equally have been written 
\begin{align*}
E\cong (A_{+}(T^{+}M)\oplus A_{-}(T^{-}M))\oplus (a_{-}^{*}(T^{(0,1)}M)\oplus a_{+}^{*}(T^{(1,0)})).
\end{align*}
\noindent Note that in this case, we clearly have $JK=KJ$ where $K$ is the standard para-complex structure with respect to $A$ on $E$ as defined in Theorem \ref{Standard para-complex structure}, and so we can construct a third para-complex structure $L = JK$. In the case where $Im(\rho^{*})$ is isotropic, then $J$ and $K$ are both para-Hermitian, and so the resulting para-complex structure $L$ is \textit{split-para-complex}, as $\langle L \cdot , L\cdot \rangle = \langle \cdot , \cdot \rangle $. This structure of two commuting para-complex structures (and the induced third para-complex structure) is referred to as a \textit{split-para-complex structure}, and since two of the structures are para-Hermitian, we refer to this as a \textit{split-para-Hermitian structure}. As far as I am aware, this is the first naturally occurring instance of a split-para-Hermitian structure, though similar structures have been considered before. We can conclude with this theorem
\begin{Thm}
If $(E,J)$ is an exact para-holomorphic algebroid, then $E$ admits a flat para-complex connection if and only if $(E,J,K)$ is split-para-complex.   
\end{Thm}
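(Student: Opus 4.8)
The plan is to prove the two implications separately, after first isolating the conceptual point that the biconditional factors into two logically independent equivalences: the condition that the connection $A$ be \emph{para-complex} will correspond to the commutation $JK=KJ$, while the condition that $A$ be \emph{flat} will correspond to the integrability of the eigenbundles of $K$. Throughout, $K$ is a connection-type para-complex structure, meaning one whose $-1$-eigenbundle is $\rho^{*}(T^{*}M)=\ker\rho$ (exactness) and whose $+1$-eigenbundle $H$ satisfies $E=\rho^{*}(T^{*}M)\oplus H$. By Theorem \ref{Standard para-complex structure} such a $K$ is the standard para-complex structure of the connection $A:=(\rho|_{H})^{-1}$, and $A$ is flat exactly when $H=A(TM)$ is closed under the Courant bracket, i.e. exactly when $K$ is integrable.

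For the forward direction I would take a flat para-complex connection $A$ and its standard para-complex structure $K$, and check that $J$ stabilizes both eigenbundles of $K$. That $J$ preserves $\rho^{*}(T^{*}M)=\ker\rho$ is immediate from para-holomorphicity, since $\rho(e)=0$ forces $\rho(J_{E}e)=J_{TM}\rho(e)=0$. That $J$ preserves $H$ follows because $A$ is para-complex, so $A(T^{\pm}M)\subseteq E_{\pm}$ and hence $H=A(T^{+}M)\oplus A(T^{-}M)$ is $J$-invariant. Equivalently, one displays the common refinement $E=A_{+}(T^{+}M)\oplus A_{-}(T^{-}M)\oplus a_{-}^{*}(T^{(0,1)}M)\oplus a_{+}^{*}(T^{(1,0)}M)$ from the discussion preceding the theorem, on each summand of which $J$ and $K$ act as commuting scalars. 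Either way $JK=KJ$, so $(E,J,K)$ is split-para-complex.

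For the converse I would start from a split-para-complex $(E,J,K)$ with $K$ of connection type, set $H$ equal to the $+1$-eigenbundle, and form the connection $A=(\rho|_{H})^{-1}$. The commutation $JK=KJ$ says precisely that $J$ preserves $H$, and the crucial step is to upgrade this to the intertwining $J_{E}\circ A=A\circ J_{TM}$. I would argue as follows: for $v\in TM$ the element $J_{E}A(v)$ lies in $H$ (as $J$ preserves $H$), while applying $\rho$ and invoking para-holomorphicity gives $\rho(J_{E}A(v))=J_{TM}\rho(A(v))=J_{TM}v$; injectivity of $\rho|_{H}$ then forces $J_{E}A(v)=A(J_{TM}v)$. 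Thus $A$ is para-complex. Flatness is then supplied by integrability of $K$: since $H=A(TM)$ is closed under the bracket, Theorem \ref{Standard para-complex structure} yields that $A$ is flat, and we obtain the desired flat para-complex connection.

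The hard part is exactly the para-complex property of $A$ in the converse: one must convert the purely algebraic hypothesis ``$J$ stabilizes $H$'' into the intertwining relation, and the only bridge available is para-holomorphicity of the anchor, so the argument genuinely uses that $(E,J)$ is para-holomorphic and not merely para-Hermitian. A secondary point requiring care is that ``split-para-complex'' must be read here so as to carry the integrability of $K$ (closure of $H$) needed for flatness; the commutation $JK=KJ$ by itself produces a para-complex connection but not necessarily a flat one, since for $v_{1},v_{2}\in T^{+}M$ the defect $[A_{+}v_{1},A_{+}v_{2}]-A_{+}[v_{1},v_{2}]$ lands in $\ker a_{+}=\mathrm{Im}(a_{-}^{*})=a_{-}^{*}(T^{(0,1)}M)$ (Proposition \ref{Exact Hermitian}) and need not vanish. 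Flatness comes precisely from the integrable refinement of the split structure, which is why the two halves of the equivalence must be tracked independently.
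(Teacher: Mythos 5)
Your proposal is correct and follows essentially the same route as the paper: the converse is exactly the paper's argument ($JK=KJ$ forces $J$ to preserve $H$, para-holomorphicity of $\rho$ then upgrades this to $J_{E}\circ A=A\circ J_{TM}$ via invertibility of $\rho|_{H}$, and flatness corresponds to integrability of $H$), and the forward direction matches the discussion the paper refers back to. Your explicit remark that ``split-para-complex'' must be read as carrying the integrability of $K$ is the same point the paper makes in its closing sentence, just stated more carefully.
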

\begin{proof}
We discussed already the forward direction here, so we focus on proving that a split-para-complex structure (with one para-complex structure being the standard one with one eigenbundle equal to $\rho^{*}(T^{*}M)$) is equivalent to defining a para-complex connection. To begin, we know that the choice of para-Hermitian structure $K$ is equivalent to choosing a decomposition $E = \rho^{*}(T^{*}M)\oplus H$, where $H$ is half dimensional and isotropic. We also remarked that $\rho\vert_{H}$ must be an isomorphism, and so $A:=(\rho\vert_{H})^{-1}: TM\rightarrow H\hookrightarrow E$ defines a connection on $E$. Our goal now is to show that $A$ is a para-complex connection in the case where $(E,J,K)$ is split-para-complex and $(E,J)$ is para-holomorphic. We focus on the subbundle $H$. Since $JK=KJ$ and $H$ is the $-1$-eigenbundle of $K$, we have $- J\vert_{H} = K\circ J\vert_{H}$, and so $J\vert_{H} : H\rightarrow H$. Further, since $\rho\vert_{H}$ is invertible and $(E,J)$ is para-holomorphic, we have $(\rho\vert_{H})^{-1}\circ J_{TM} = J\vert_{H}\circ (\rho\vert_{H})^{-1}$, and so $(\rho\vert_{H})^{-1}$ is a para-complex connection. In particular, the eigenbundle $H$ is integrable if and only if $A$ is a flat connection.
\end{proof}
Returning briefly to the concept of a para-Hermitian algebroid morphism, given two para-holomorphic algebroids $(E,J_E)$ and $(F,J_F)$ over the para-Hermitian manifold $M$, then a para-Hermitian algebroid morphism $\Psi$ does not necessarily map connections to connections. Given $A:TM\rightarrow E$, the map $\Psi\circ A : TM\rightarrow F$ does not necessarily have the property that $\rho_F \circ \Psi \circ A = Id_{TM}$, and so we cannot conclude that $A$ is a connection.  In fact, the condition for this to be true is precisely that $\Psi$ must preserve the anchor which is equivalent to $\Psi$ being a Courant algebroid morphism. The only obstruction to $\Psi \circ A$ being a connection is the one previously mentioned, and so $\Psi \circ A$ is still a bivector on $F$. Further, we have the following. 
\begin{prop}
Let $(E,J_E)$ and $(F,J_F)$ be exact para-Hermitian algebroids over a para-Hermitian manifold $M$, and let $\Psi : E\rightarrow F$ be an isomorphism of para-Hermitian algebroids. When a connection $A:TM\rightarrow E$ is flat, $\Psi \circ A$ is a Poisson bivector on $F$. 
\end{prop}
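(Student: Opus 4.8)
The claim is that for an isomorphism $\Psi : E \to F$ of exact para-Hermitian algebroids over $M$ and a flat connection $A$ on $E$, the composite $\Psi \circ A : TM \to F$ is a Poisson bivector on $F$. The plan is first to understand precisely what object $\Psi \circ A$ is. Since $\Psi$ is a para-Hermitian algebroid morphism, it need not preserve the anchor, so $\rho_F \circ \Psi \circ A$ need not equal $\mathrm{Id}_{TM}$; hence $\Psi \circ A$ fails to be a connection in the sense of Definition \ref{Connection}. The discussion immediately preceding the statement already tells us that the only obstruction to $\Psi \circ A$ being a connection is the failure to preserve the anchor, and that $\Psi \circ A$ is therefore a bivector on $F$. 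I would begin by making this identification explicit: because $\Psi$ is an isometry into the isotropic complement and $A$ has isotropic image, $\Psi(A(TM))$ is an isotropic half-dimensional subbundle of $F$, and comparing it with the graph of the identity connection produces a genuine bivector field $\pi_F \in \mathfrak{X}^2(M)$, just as in the standard splitting $F = \rho_F^*(T^*M) \oplus H$.

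**Carrying out the argument.**
The core of the proof is to show this bivector is Poisson, i.e. $[\pi_F, \pi_F]_s = 0$. First I would exploit flatness of $A$: by Theorem \ref{Standard para-complex structure}, the image $A(TM)$ is then closed under the Courant bracket on $E$, so $(E, K)$ is an integrable para-complex (indeed para-Hermitian) algebroid with $A(TM)$ as its $+1$-eigenbundle. Next, since $\Psi$ preserves the Courant bracket by hypothesis (it is a para-Hermitian algebroid morphism, hence bracket-preserving), the image subbundle $\Psi(A(TM)) \subset F$ is again closed under the Courant bracket on $F$: for $v_1, v_2 \in \Gamma(TM)$ we have $[\Psi A v_1, \Psi A v_2]_F = \Psi[A v_1, A v_2]_E = \Psi A[v_1,v_2] \in \Psi(A(TM))$. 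Thus $\Psi(A(TM))$ is a maximally isotropic, bracket-closed subbundle of the exact Courant algebroid $F$ — that is, a Dirac structure. By the correspondence recalled in the introduction (Dirac structures transverse to $\rho_F^*(T^*M)$ correspond to Poisson bivectors), the bivector $\pi_F$ associated to this Dirac structure via the splitting $F = \rho_F^*(T^*M) \oplus \Psi(A(TM))$ is automatically Poisson.

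**The main obstacle.**
The hard part will be verifying that $\Psi(A(TM))$ really is transverse to $\rho_F^*(T^*M)$, so that it is the graph of a bivector rather than degenerating. Transversality can fail precisely where $\rho_F(\Psi(A(v))) = 0$, and since $\Psi$ does not preserve the anchor there is no a priori reason $\rho_F \circ \Psi \circ A$ is invertible. I would address this by noting that $\Psi(A(TM))$ and $\rho_F^*(T^*M)$ are both maximally isotropic of complementary rank in $F$, and that their intersection, being isotropic and contained in $\ker \rho_F = \mathrm{Im}(\rho_F^*)$, corresponds exactly to the rank drop of $\pi_F^{\#}$; where transversality holds the graph description and hence the Poisson property go through, and on the degeneracy locus one argues by the bracket-closure computation above that the Schouten bracket still vanishes. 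Assembling these, $\Psi \circ A$ defines a Poisson bivector on $F$, completing the proof.
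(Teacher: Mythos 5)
Your argument is essentially the paper's own proof: isotropy of $\Psi(A(TM))$ follows from $\Psi$ being an isometry, bracket-closure follows from flatness of $A$ together with $\Psi$ preserving the Courant bracket, and the resulting Dirac structure gives the Poisson bivector. Your additional worry about transversality to $\rho_F^*(T^*M)$ is a reasonable point of care that the paper itself passes over (it simply asserts in the preceding discussion that $\Psi\circ A$ is a bivector), but it does not change the route of the proof.
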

\begin{proof}
Since $\Psi$ is an isometry, $\langle \Psi\circ A,\Psi\circ A \rangle = \langle A , A \rangle = 0 $, so the image of $A$ is isotropic, meaning $A$ is the graph of a bivector on $F$. Further Since $A$ is flat, $[\Psi\circ A,\Psi \circ A ]_{F} = \Psi ([A,A]_E) = \Psi \circ A [\cdot,\cdot]_M$, so $\Psi \circ A$ preserves the Courant bracket and is therefore a Poisson bivector.
\end{proof}
\pagebreak
\section{ \normalsize Examples}
\subsection{\normalsize The natural structures on $\mathbb{T}M$}
\begin{Example}\label{Normal}
The natural para-holomorphic structure with compatible para-complex connection on a para-Hermitian manifold.
\end{Example}
\indent Let $(M,g,J)$ be an almost para-Hermitian manifold. The splitting of the tangent bundle into $TM=T^{+}M\oplus T^{-}M$ induces a type decomposition on the space of differential forms, and so we have $T^{*}M = T^{(1,0)}M\oplus T^{(0,1)}M$. We endow $\mathbb{T}M= TM\oplus T^{*}M$ with the standard Courant algebroid structure as described in the introduction and equation \eqref{Standard Algebroid}. Now, the splitting of the tangent and the cotangent bundles allows us to introduce an almost para-complex structure $\tilde{J}$ on $\mathbb{T}M$, with $+1$-eigenbundle $T^{+}M\oplus T^{(0,1)}M$ and $-1$-eigenbundle $T^{-}M\oplus T^{(1,0)}M$. We can note that for any $X^{+}\in \Gamma(T^{+}M)$, $X^{-}\in\Gamma( T^{-}M)$, $\xi^{(1,0)}\in \Gamma(T^{(1,0)}M)$ and $\xi^{(0,1)}\in \Gamma(T^{(0,1)}M)$, we have $\iota_{X^{+}}\xi^{(0,1)}= \iota_{X^{-}}\xi^{(1,0)}=0$, so this structure is para-Hermitian as the eigenbundles are isotropic. Further, we can easily see that the para-complex structures $J$ and $\tilde{J}$ commute with respect to the projection onto $TM$, so we have an almost para-holomorphic structure. Turning now to the obstruction to integrability, we can compute that 
\begin{align}\label{Standard eigenbundle contraction}
    [X^{+}\oplus \xi^{(0,1)}, Y^{+}\oplus \eta^{(0,1)}] &= [X^{+},Y^{+}]\oplus \left( \iota_{X^{+}}d\eta^{(0,1)} + \iota_{Y^{+}}d\xi^{(0,1)} \right),
    \\
    \nonumber [X^{-}\oplus \xi^{(1,0)}, Y^{-}\oplus \eta^{(1,0)}] &= [X^{-}, Y^{-}]\oplus \left( \iota_{X^{-}}d\eta^{(1,0)} + \iota_{Y^{-}}d\xi^{(1,0)} \right).
\end{align}
\indent Now, we did not assume that the eigenbundles $T^{\pm}M$ were integrable, and so not only is $[X^{\pm},Y^{\pm}]$ not guaranteed to be a section of $T^{\pm}M$, but we fail to satisfy the conditions in Theorem \ref{Classical Decomposition of Forms}, and so 
\begin{align*}
    d\Omega^{(1,0)}(M), d\Omega^{(0,1)}(M) \subset \Omega^{(2,0)}(M)\oplus \Omega^{(1,1)}(M) \oplus \Omega^{(0,2)}(M).
\end{align*}
One can check that the bivector field on $M$ defined by equation \eqref{Poisson on M} is identically zero, as $\rho  (\pi_{+}(\rho^{*}(\xi)))= \rho (0\oplus \xi^{(0,1)}) = 0$, which is to be expected because $\mathbb{T}M$ is a para-holomorphic algebroid. By Theorem \ref{Classical Decomposition of Forms}, we can see that if $T^{+}M$ is integrable, then $d\Omega^{(0,1)}(M) \subset \Omega^{(1,1)}(M) \oplus \Omega^{(0,2)}(M)$. Since in equation \eqref{Standard eigenbundle contraction} we are contracting $d\eta^{(0,1)}$ and $d\xi^{(0,1)}$ with elements of $\Gamma(T^{+}M)$, the part of this that remains is the $\Omega^{(1,1)}(M)$-part, and so upon contracting with such a section, we are left with an element of $\Omega^{(0,1)}(M)$, as desired. There was nothing special about choosing $T^{+}M$ here, so the integrability of the eiegenbundle $T^{\pm}M$ implies the integrability of the $\pm1$-eigenbundle of the para-Hermitian structure on $\mathbb{T}M$ and vice versa. All are equivalent to the Nijenhuis tensor vanishing, as discussed in the paragraph after equation \eqref{Nijenhuis Eigenbundle}.
\\

\indent Finally, consider the connection $A_\omega: TM\rightarrow \mathbb{T}M$, given by
\begin{align}\label{2 connections}
     A_\omega&: X\mapsto X \oplus \iota_X\omega,
\end{align}
where $\omega = g(\cdot , J_{TM} \cdot)$ is the fundamental $2$-form. One can check that the image of $A_{\omega}$ is isotropic (as it is the graph of a 2-form), and that it gives the identity map when composed with the projection onto $TM$. Then $A_\omega$ is a para-complex connection for the following reason. We note that $g(X^{\pm},X^{\pm}) = 0$ as the eigenbundles $T^{\pm}M$ are isotropic, and so $\iota_{X^{+}}\omega \in \Omega^{(0,1)}(M)$ and $\iota_{X^{-}}\omega \in \Omega^{(1,0)}(M)$, which ensures that $A_{\omega}$ is a para-complex connection, as it maps the $\pm1$-eigenbundles of $TM$ into the $\pm1$-eigenbundles of $\mathbb{T}M$.
\\

\indent The closure of the image of $A_\omega$ under the Courant bracket corresponds to $Gr_\omega\subset \mathbb{T}M$ being a Dirac subbundle, and hence corresponds to $\omega$ being closed. Therefore the K\"{a}hler condition is equivalent to the the para-complex connection $A_\omega$ being flat. Additionally, one can easily check that if $A_\sigma$ is a connection generated by the $2$-form $\sigma$, then $A$ is para-complex if and only if $\sigma(J\cdot,J\cdot)= -\sigma$ (i.e. $\sigma$ is compatible with $J$). We can see that the interesting facts about para-Hermitian manifolds can be translated nicely to the setting of para-holomorphic algebroids and para-complex connections. This construction is due to Svoboda \cite{Svoboda}, though we have chosen to take a different perspective. We will now look at some non-trivial examples arising from Lie groups. 
\subsection{ \normalsize Quadratic Lie Groups}
\begin{Example}\label{Para-Hermitian Lie Groups}
A para-Hermitian structure on $\mathbb{T}G$ for any Lie group $G$ with a quadratic Lie algebra $\mathfrak{g}$.
\end{Example}
\indent A class of examples can be derived from the discussion of Dirac structures on Lie groups in \cite{Spinors}. Let $G$ be a Lie group with Lie algebra $\mathfrak{g}$, and suppose that this Lie algebra carries with it an $Ad$-invariant inner product $B$. Let $\theta^L$,$\theta^R\in \Omega^1(G)\times \mathfrak{g}$ be the left/right-Maurer-Cartan forms, respectively. One can define a bi-invariant (with respect to the adjoint action of $G$) pseudo-Riemannian metric on $G$ by $B(\theta^L, \theta^L)$. One can also define the bi-invariant $3$-form $\eta \in \Omega^3(G)$ by 
\begin{align*}
    \eta := \frac{1}{12}B(\theta^L, [\theta^L,\theta^L]_{\mathfrak{g}}).
\end{align*}
The bi-invariance of $\eta$ implies that it is also closed (as its Jacobiator vanishes on constant sections), and so one can define the $\eta$-twisted Courant bracket on $\mathbb{T}G$, $\llbracket \cdot ,\cdot \rrbracket_{\eta}$. Now, let $D = G\times G$ with Lie algebra $\mathfrak{d} = \mathfrak{g}\oplus \mathfrak{g}$. We have a natural smooth action 
\begin{align*}
\mathcal{A}:D\rightarrow \textrm{Diff}(G), \, \, \, \, \, \, \, \, \, \, \mathcal{A}(g,g') = l_{g}\circ r_{g^{-1}},
\end{align*}
for $g,g'\in G$, whose corresponding infinitesimal action 
\begin{align*}
\mathcal{A}_{*} : \mathfrak{d}\rightarrow \mathfrak{X}(G), \, \, \, \, \, \, \, \, \, \, \, \mathcal{A}_{*} (X,X') = X^L-(X')^R,
\end{align*}
where $X^L_g = l_g(X)$ and $(X')^R_g = r_g(X')$ for any $X,X'\in \mathfrak{g}$. The authors then use this action to define a $D$-equivarant map 
\begin{align*}
    s : \mathfrak{d}\rightarrow \Gamma(\mathbb{T}G)&, \, \, \, \, \, \, \, \, \, \,  s(X,X') = s^L(X) + s^R(X'),
    \\
    s^L(X) = X^L \oplus \frac{1}{2}B(\theta^L, X)&, \, \, \, \, \, \, \, \, \, \,  s^R(X') = -(X')^R \oplus \frac{1}{2}B(\theta^R,X').
\end{align*}
Equipping $\mathfrak{d}$ with the bilinear form $B_{\mathfrak{d}}$, given by $+B$ on the first $\mathfrak{g}$ summand of $\mathfrak{d} = \mathfrak{g}\oplus \mathfrak{g}$, and $-B$ on the second summand gives a split signature bilinear form (exactly what it needed for $\mathfrak{d}$ to admit a para-complex structure). The map $s:\mathfrak{d}\rightarrow \Gamma(\mathbb{T}G)$ has the following important properties:
\begin{enumerate}
    \item $\langle s(e_1),s(e_2)\rangle = B_{\mathfrak{d}}(e_1,e_2)$,
    \item $\llbracket s(e_1),s(e_2)\rrbracket_{\eta} = s([e_1,e_2])$,
    \item $\Upsilon(s(e_1),s(e_2),s(e_3)) = B_{\mathfrak{d}}(e_1,[e_2,e_3])$,
\end{enumerate}
where $e_i\in \mathfrak{d}$, and $\Upsilon(x_1,x_2,x_3) = -\langle \llbracket x_1,x_2\rrbracket,x_3\rangle$. These identities tell us that $s$ is a $D$-equivariant isometric isomorphism of vector bundles $G\times \mathfrak{d} = \mathbb{T}G$, identifying the twisted Courant bracket $\llbracket \cdot ,\cdot \rrbracket_{\eta}$ on $\mathbb{T}G$ with the unique Courant bracket on $G\times \mathfrak{d}$ which agrees with the Lie bracket on $\mathfrak{d}$ for constant sections. Note that $s$ is not necessarily a Courant algebroid ismorphism as it does not preserve the anchors. For this reason, any pair of Lagrangian subalgebras on $\mathfrak{d}$ (called a Manin-triple) is equivalent to defining an integrable para-Hermitian algebroid structure on $\mathbb{T}G$ with the $\eta$-twisted bracket, and the map $s$ can be understood as an isomorphism of para-Hermitian algebroids. A classification of Lagrangian subalgebras of Lie algebras in the form $\mathfrak{d} = \mathfrak{g}\oplus \overline{\mathfrak{g}}$ where $\mathfrak{g}$ is a complex semi-simple Lie algebra was completed in \cite{Karolinsky}, and so we have a wealth of examples of para-Hermitian algebroids corresponding to Lie groups with quadratic Lie algebras on the Courant algebroid $\mathbb{T}G$.
\subsection{ \normalsize Structures on $\mathbb{T}G$ induced by the Iwasawa Decomposition}
\begin{Example}\label{Cool Example}
A example of an exact para-holomorphic algebroid with a compatible flat para-complex connection over $\mathbb{T}G$ where $G$ is a Lie group with a quadratic, semi-simple Lie algebra $\mathfrak{g}$ corresponding to the Cartan-Dirac structure.
\end{Example}

\indent Example \ref{Para-Hermitian Lie Groups} can be pushed further to yield an exact para-holomorphic algebroid with a para-complex connection. This construction is based on the Iwasawa deocomposition, which one can see described in \cite{Jana}. We will first give the highlights of the Iwasawa decomposition. Let $\mathfrak{g}$ be a complex semi-simple Lie algebra. Then $\mathfrak{g}$ is guaranteed to have a compact real form $\mathfrak{k}$, with respect to which $\mathfrak{g}$ (when viewed as a real vector space) admits the decomposition $\mathfrak{g} = \mathfrak{k}\oplus i \mathfrak{k}$. Since $\mathfrak{k}$ is a compact Lie algebra, the Killing form $\kappa$ on $\mathfrak{k}$, defined by $\kappa(X,Y) = 2\dim(\mathfrak{k})Tr(ad_{X}ad_{Y})$, is negative definite. We can extend the Killing form on $\mathfrak{k}$ to all of $\mathfrak{g}$ by $\mathbb{C}$-linearity. One then obtains a non-degenerate symmetric bilinear form on $\mathfrak{g}$ by taking $-Im(\kappa)$, which makes $\mathfrak{g}$ into a quadratic Lie algebra, for which $\mathfrak{k}$ is a Lagrangian subalgebra.
\\

\indent There exists on $\mathfrak{g}$ another Lagrangian subalgebra that is naturally dual to $\mathfrak{k}$. To begin, pick a maximal Cartan subalgebra $\mathfrak{t}\subset \mathfrak{k}$, and note that $\mathfrak{a} = i\mathfrak{t}$ is also a commuting subalgebra of $\mathfrak{g}$. If we consider the root space decomposition with respect to this choice of $\mathfrak{a}$ and define a notion of positivity using the inner product on $\mathfrak{g}$, then we can define $\mathfrak{n}= \bigoplus_{\Sigma^{+}}g_{\lambda}$ to be the direct sum over the positive roots spaces. Then $\mathfrak{g}$, when viewed as a real Lie algebra, admits the decomposition into the sum of Lagrangian subalgebras $\mathfrak{g} = \mathfrak{k} \oplus (\mathfrak{a}\oplus \mathfrak{n})$, which we refer to as the Iwasawa decomposition of $\mathfrak{g}$. 
\\

\indent Returning to Example \ref{Para-Hermitian Lie Groups}, let $G$ be a Lie group with complex semi-simple Lie algebra $\mathfrak{g}$. The anchor map on $G\times \mathfrak{d} = G\times (\mathfrak{g}\oplus \overline{\mathfrak{g}})$ is simply given by projection onto the first summand, where $TG \cong G\times \mathfrak{g}$ under left trivialization. One then defines an integrable para-Hermitian structure $J_{\mathfrak{g}}$ on $TG$ by taking the Iwasawa decomposition $\mathfrak{g} = \mathfrak{k}\oplus (\mathfrak{a}\oplus\mathfrak{n})$, and defining $\mathfrak{k}$ to be the $+1$ eigenbundle, and $(\mathfrak{a}\oplus \mathfrak{n})$ to be the $-1$-eigenbundle. This lifts to a natural para-Hermitian structure on $G\times \mathfrak{d}$ by $J_\mathfrak{d}(X\oplus Y) = J_{\mathfrak{g}}(X) \oplus J_{\mathfrak{g}}(Y)$. The most natural connection on $\mathbb{T}G$ is the diagonal map $A_{\Delta}: TG \rightarrow G\times \mathfrak{d}$, given by $A_{\Delta}(g,X)= s(g,X,X)$ for $(g,X)\in T_{g}G$. The connection $A_{\Delta}$ is flat with respect to the induced Courant bracket on $G\times \mathfrak{g}$. The map $s\circ A_{\Delta}$ also preserves $\eta$-twisted bracket, and is often referred to as the Cartan-Dirac structure. The Dirac foliation of $s\circ A_{\Delta}$ corresponds to the conjugacy classes of $G$.
\\

\indent It's quite easy to see that the connection $A_\Delta$ is para-complex and that the anchor commutes with the para-complex structure on $G\times \mathfrak{d}$, so $G\times \mathfrak{d}$ is an example of an exact para-holomorphic algebroid with a flat para-complex connection. Further, the Cartan-Dirac structure on $\mathbb{T}G$ can be realized as the image of this para-complex connection under the para-Hermitian algebroid isomorphism $s$, which has the form
\begin{align}\label{Poisson precurser}
    s\circ A_{\Delta}(g,X) &= \left( X^L- X^R, \frac{1}{2} B(\theta^L + \theta^R, X) \right).
\end{align}
It is instructive that $\mathbb{T}G$ and $G\times \mathfrak{d}$ are isomorphic as para-holomorphic algebroids under $s$ (as $s$ covers the identity map on $G$ which is a bi-para-holomorphic isometric diffeomorphism), but not as Courant algebroids. This illustrated the utility of considering para-holomorphic algebroids in the first place. As a concrete example, consider the Lie group $SL(n;\mathbb{C})$. The Lie group $SL(n;\mathbb{C})$ admits an Iwasawa decomposition with $\mathfrak{su}(n)$ as the choice of compact real form, and so $K = SU(n)$,
\begin{align*}
    A &= \left\{ \begin{bmatrix}
    t_1&0&\cdots &0
    \\
    0 & t_2 & \cdots & 0 \\
    \vdots & \vdots & \ddots &\vdots \\
    0 & 0 & \cdots & t_n
    \end{bmatrix}, \, \, \, \, \, \, t_i \in \mathbb{R}, \, \, \, \, \, \prod_i t_i = 1\right\} , 
    \\
    N&= \left\{ \begin{bmatrix}
    1 & \theta_{12} & \cdots & \theta_{1n}\\
    0 & 1 & \cdots & \theta_{2n}\\
    \vdots & \vdots & \ddots &\vdots \\
    0 & 0 &\cdots & 1
    \end{bmatrix},  \, \, \, \, \, \theta_i\in \mathbb{R}, \textrm{ not all }\theta_i=0 \right\}.
\end{align*}
Therefore $\mathbb{T}SL(n;\mathbb{C})$ with the $\eta$-twisted Courant bracket admits an exact para-holomorphic algebroid structure with an associated flat para-complex connection (induced by the diagonal inclusion of $\mathfrak{sl}(n;\mathbb{C})$ into its double) for every $n$. 
\subsection{Poisson-Lie Groups}
\indent The connection to Poisson-Lie groups here is exciting. In the above example, all that was required was that the complex semi-simple Lie algebra $\mathfrak{g}$ has a compact real form. The non-degenerate symmetric bilinear form could be derived from the Killing form on the subalgebra corresponding to that compact real form. Poisson-Lie groups allow us to begin with a Lie group $K$ together with a multiplicative Poisson structure $\Pi_K$ and then find an exact para-holomorphic algebroid structure with a flat para-complex connection on the Drinfeld double $D$. This was first done by Lu in \cite{Lu1}. Consider the definition:
\begin{Def}
Let $K$ be a Lie group and $\Pi_K$ a Poisson structure on $K$. We say that $\Pi_K$ is multiplicative if the multiplication map $m:K\times K \rightarrow K$ is a Poisson map, where $K\times K$ is endowed with the product Poisson structure. For $k_1,k_2\in K$, the multiplicativity condition is equivalent to 
\begin{align*}
    (\Pi_{K})_{k_1k_2} = L_{k_1}(\Pi_{K})_{k_2} + R_{k_2}(\Pi_K)_{k_1}.
\end{align*}
\end{Def}
\indent An immediate consequence of this definition is that all multiplicative Poisson structures vanish at the identity. The Poisson structure therefore admits a linearization at the identity to give a map $d_e\Pi_K : \mathfrak{k}\rightarrow \mathfrak{k}\wedge \mathfrak{k}$. The condition that the dual map $(d_e\Pi_K)^{*}:\mathfrak{k}^{*}\wedge \mathfrak{k}^{*}\rightarrow \mathfrak{k}^{*}$ must satisfy in order to be a Lie bracket is the Jacobi identity, which is equivalent to the fact that the bivector $\Pi_K$ is Poisson. This means that $\mathfrak{k}^{*}$ is a Lie algebra in its own right. Lu was able to show that there is a unique connected simply-connected Poisson Lie group $(K^{*},\Pi_{K^{*}})$ such that $(d_e\Pi_{k^{*}})^{*}$ is the Lie bracket on $\mathfrak{k}$. This gives $(\mathfrak{k}\oplus \mathfrak{k}^{*})$ the structure of a Lie bialgebra. Finally, one considers the double Lie algebra to the pair $(\mathfrak{k},\mathfrak{k}^{*})$ to be $\mathfrak{d} = \mathfrak{k}\oplus \mathfrak{k}^{*}$ together with the Lie bracket
\begin{align*}
    [X\oplus \xi , Y\oplus \eta]_{\mathfrak{d}} &= \left([X,Y]_{\mathfrak{k}} + ad_{\xi}^{*}(Y) - ad_{\eta}^{*}(X)\right) \oplus \left( [\xi,\eta]_{\mathfrak{g}^{*}} + ad_{X}^{*}(\eta)- ad_{Y}^{*}(\xi) \right),
\end{align*}
for $X,Y\in \mathfrak{k}$, $\xi,\eta\in \mathfrak{k}^{*}$.
\\

\indent Now, one can consider the unique connected simply-connected Lie group $D$ corresponding to the Lie algebra $\mathfrak{d}$. It is a fact about Lie bialgebras that the natural pairing $\langle X\oplus \xi ,Y\oplus \eta\rangle =  \langle \xi , Y\rangle + \langle \eta , X\rangle$ is $ad$-invariant with respect to the above bracket, and so $\mathfrak{d}$ is a quadratic Lie algebra with $\mathfrak{k}$ and $\mathfrak{k}^{*}$ as dual Lagrangian subalgebras \cite{Poisson Structures}. There exists two natural inclusions $\mathfrak{k},\mathfrak{k}^{*} \hookrightarrow \mathfrak{d}$. These inclusions integrate to two local diffeomorphisms 
\begin{align}\label{2Diffeo's}
    \Psi &: K\times K^{*}\rightarrow D, & \Phi &: K^{*}\times K \rightarrow D,
    \\
    \nonumber &(k,u)\mapsto ku & &(u,k)\mapsto uk
\end{align}
\indent The first of these local diffeomorphisms defines a para-complex structure on $D$, which is clearly compatible with the quadratic Lie algebra structure. Further, given a local basis $\{e_i\}$ of $\mathfrak{k}$ and corresponding dual basis $\{\varepsilon^i\}$ of $\mathfrak{k}^{*}$, we can define the classical $R$-matrix
\begin{align}\label{R-Matrix}
    \Lambda &= \frac{1}{2}\sum_{i}e_i\wedge \varepsilon^i.
\end{align}
This classical $R$-matrix defines a natural Poisson structure on $D$ given by
\begin{align*}
    \Pi_D^{+} &= \Lambda^L + \Lambda^R.
\end{align*}
\indent The Poisson structure $\Pi_D^{+}$ falls short of defining a connection on $\mathbb{T}D$ because it is possible that it is degenerate, and so will not compose with the anchor to give the identity map. There is a condition under which $\Pi_D^{+}$ is non-degenerate however, and it is related to the two diffeomorphisms in equation \eqref{2Diffeo's}. Following Lu, we define the left infinitesimal Dressing vector field on $K$ (resp. $K^{*}$) induced by the element $\xi \in \mathfrak{k}^{*}$ (resp $X\in \mathfrak{k}$) by 
\begin{align*}
    \lambda &: \mathfrak{k}^{*}\rightarrow \mathfrak{X}(K), & \rho &: \mathfrak{k}\rightarrow \mathfrak{X}(K^{*})
    \\
     &\xi\mapsto \tilde{\Pi}_K(\xi^L) & &X\mapsto \tilde{\Pi}_{K^{*}}(X^L)
\end{align*}
where $\tilde{\Pi}_K:T^{*}K\rightarrow TK$ is the bundle map induced by $\Pi_K$, and $\xi^L$ is the left-invariant $1$-form induced by $\xi$ on $K$. One can do something similar for the dual Lie group $K^{*}$ to define the Dressing action of $\mathfrak{k}$ on $K^{*}$. We say that $(K,\Pi_K)$ is complete if the left infinitesimal Dressing action integrates to an action of $K^{*}$ on $K$ (i.e. when the flows of the vector fields on $K$ are complete). Finally, from Proposition 2.45 in \cite{Lu1}, we have the following result.
\begin{Thm}
\cite{Lu1} Assume that $(K,\Pi_K)$ is a complete and simply connected Poisson-Lie group. Then the Poisson structure $(D,\Pi_D^{+})$ is non-degenerate (and therefore symplectic) everywhere.
\end{Thm}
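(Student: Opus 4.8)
The plan is to prove non-degeneracy pointwise by left-trivializing $\Pi_D^{+}$, reducing it to a linear-algebra transversality condition on $\mathfrak{d}$, and then to feed in completeness through the two factorization maps $\Psi$ and $\Phi$ of \eqref{2Diffeo's}. First I would view the $R$-matrix $\Lambda$ of \eqref{R-Matrix} as a bivector on $\mathfrak{d}$; identifying $\mathfrak{d}^{*}\cong\mathfrak{d}$ by the $\mathrm{Ad}$-invariant pairing $\langle\cdot,\cdot\rangle$ and using $\langle e_i,\varepsilon^j\rangle=\delta_i^j$, a short computation identifies its associated map with $r=\tfrac12(P_{-}-P_{+})$, where $P_{+},P_{-}$ are the projections of $\mathfrak{d}=\mathfrak{k}\oplus\mathfrak{k}^{*}$ onto $\mathfrak{k}$ and $\mathfrak{k}^{*}$. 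Since $L_d$ carries $\Lambda$ to $\Lambda$ and $R_d$ carries it to $\mathrm{Ad}_{d^{-1}}\Lambda$ in the left trivialization, the bundle map $(\Pi_D^{+})^{\sharp}:T^{*}D\to TD$ becomes, at $d$, the operator $R_d:=r+\mathrm{Ad}_{d^{-1}}\,r\,\mathrm{Ad}_{d}$ on $\mathfrak{d}$, using that $\mathrm{Ad}$ is orthogonal for $\langle\cdot,\cdot\rangle$. Thus $\Pi_D^{+}$ is non-degenerate at $d$ exactly when $R_d$ is invertible.

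Next I would compute $\ker R_d$. Writing $Q_{-}=\mathrm{Ad}_{d^{-1}}P_{-}\mathrm{Ad}_{d}$, one simplifies $R_d=P_{-}+Q_{-}-\mathrm{Id}$, and solving $R_d v=0$ forces $P_{+}v=Q_{-}v$; tracking the images and kernels of the two projections yields
\begin{align*}
\ker R_d=\bigl(\mathfrak{k}\cap\mathrm{Ad}_{d^{-1}}\mathfrak{k}^{*}\bigr)\oplus\bigl(\mathfrak{k}^{*}\cap\mathrm{Ad}_{d^{-1}}\mathfrak{k}\bigr).
\end{align*}
Because $\mathfrak{k}$ and $\mathfrak{k}^{*}$ are Lagrangian and $\mathrm{Ad}_d$ is orthogonal, applying $\mathrm{Ad}_d$ shows that $\ker R_d=0$ is equivalent to the two transversality statements $\mathfrak{d}=\mathfrak{k}\oplus\mathrm{Ad}_d\mathfrak{k}^{*}$ and $\mathfrak{d}=\mathfrak{k}^{*}\oplus\mathrm{Ad}_d\mathfrak{k}$.

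The second half of the argument identifies these two conditions with the regularity of the factorization maps. Differentiating $\Psi(k,u)=ku$ and $\Phi(u,k)=uk$ and using that $\mathrm{Ad}_K$ preserves $\mathfrak{k}$, one finds (in a right-, respectively left-, trivialization) that $d\Psi_{(k,u)}$ is an isomorphism iff $\mathfrak{d}=\mathfrak{k}\oplus\mathrm{Ad}_{ku}\mathfrak{k}^{*}$, and $d\Phi_{(u,k)}$ is an isomorphism iff $\mathfrak{d}=\mathfrak{k}^{*}\oplus\mathrm{Ad}_{uk}\mathfrak{k}$. In other words, the two transversality conditions needed for $R_d$ to be invertible at $d$ are precisely the statements that $\Psi$ and $\Phi$ are local diffeomorphisms at the points lying over $d$. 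It therefore suffices to know that every $d\in D$ admits factorizations $d=ku=u'k'$ at which $\Psi$ and $\Phi$ are local diffeomorphisms, i.e.\ that $\Psi$ and $\Phi$ are \emph{global} diffeomorphisms.

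This is where the hypotheses enter, and it is the main obstacle. Completeness of $(K,\Pi_K)$ means the left dressing vector fields $\lambda(\xi)$ are complete, so the infinitesimal dressing action integrates to a genuine action of $K^{*}$ on $K$; together with simple-connectedness this promotes the local diffeomorphisms of \eqref{2Diffeo's} to global diffeomorphisms $K\times K^{*}\cong D\cong K^{*}\times K$ (Lu's factorization theorem, \cite{Lu1}). Granting this, $d\Psi$ and $d\Phi$ are invertible everywhere, both transversality conditions hold at every $d$, and hence $R_d$ is invertible for all $d\in D$, so $\Pi_D^{+}$ is non-degenerate. Finally, since $\Pi_D^{+}$ is Poisson, its inverse defines a $2$-form whose closedness is equivalent to $[\Pi_D^{+},\Pi_D^{+}]_s=0$; non-degeneracy then makes $(D,\Pi_D^{+})$ symplectic. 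The delicate point throughout is the completeness-to-global-factorization step, without which $\Psi$ and $\Phi$ are merely local diffeomorphisms and $\Pi_D^{+}$ genuinely degenerates away from the big cell.
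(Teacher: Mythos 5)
Your argument is correct, and there is nothing in the paper to compare it against: the paper does not prove this statement but simply quotes it as Proposition 2.45 of \cite{Lu1}. What you have written is essentially Lu's own proof — left-trivialize $\Pi_D^{+}$ to the operator $R_d = r + \mathrm{Ad}_{d^{-1}}\, r\, \mathrm{Ad}_d$ with $r=\tfrac12(P_{-}-P_{+})$, compute $\ker R_d=(\mathfrak{k}\cap\mathrm{Ad}_{d^{-1}}\mathfrak{k}^{*})\oplus(\mathfrak{k}^{*}\cap\mathrm{Ad}_{d^{-1}}\mathfrak{k})$, and observe that the resulting transversality conditions $\mathfrak{d}=\mathfrak{k}\oplus\mathrm{Ad}_d\mathfrak{k}^{*}$ and $\mathfrak{d}=\mathfrak{k}^{*}\oplus\mathrm{Ad}_d\mathfrak{k}$ are exactly the surjectivity of $d\Psi$ and $d\Phi$ over $d$. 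The one heavy input, that completeness plus simple-connectedness upgrades $\Psi$ and $\Phi$ to global diffeomorphisms, is precisely the second theorem of Lu that the paper quotes immediately afterward; since Lu establishes that factorization result independently (by integrating the dressing vector fields), there is no circularity in invoking it here, and you are right to flag it as the step that cannot be obtained by pointwise linear algebra alone.
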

Relating back to the maps in equation \eqref{2Diffeo's}, Lu also tells us the following:
\begin{Thm}
\cite{Lu1} A simply-connected Poisson-Lie group $(K,\Pi_K)$ is complete if and only if the maps $\Psi : K\times K^{*}\rightarrow D$ and $\Phi:K^{*}\times K \rightarrow D$ are diffeomorphisms.
\end{Thm}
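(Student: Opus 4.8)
The plan is to reduce the two-sided statement to a single factorization problem and then to identify the global solvability of that problem with the completeness of the dressing flows. First I would record that both $\Psi$ and $\Phi$ (the maps of \eqref{2Diffeo's}) are local diffeomorphisms: the differential of $\Psi$ (resp.\ $\Phi$) at $(e,e)$ is the canonical isomorphism $\mathfrak{k}\oplus\mathfrak{k}^*\xrightarrow{\sim}\mathfrak{d}$ coming from the Lie bialgebra decomposition, and by left equivariance of multiplication its differential is an isomorphism at every point. Next I would observe that the two maps are interchanged by group inversion: since $(ku)^{-1}=u^{-1}k^{-1}$, one has $\mathrm{inv}\circ\Psi=\Phi\circ\sigma\circ(\mathrm{inv}_K\times\mathrm{inv}_{K^*})$, where $\sigma$ swaps the two factors and $\mathrm{inv}$ denotes inversion in $D$, $K$, $K^*$. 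As all the maps on the right are diffeomorphisms, $\Psi$ is a diffeomorphism if and only if $\Phi$ is, so it suffices to treat $\Phi$.

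The geometric heart of the argument is to link $\Phi$ to the dressing flows. For $\xi\in\mathfrak{k}^*$ and $k_0\in K$, I would consider the curve $t\mapsto \exp_D(t\xi)\,k_0\in D$ and, wherever the $\Psi$-factorization is available, write it as $k(t)\,u(t)$ with $k(t)\in K$, $u(t)\in K^*$ and $k(0)=k_0$, $u(0)=e$. Differentiating this identity and projecting the result onto $T_{k(t)}K$ by right translation yields a first-order ODE for $k(t)$ whose right-hand side is, up to Lu's sign conventions, exactly the dressing vector field $\lambda(\xi)=\tilde{\Pi}_K(\xi^L)$; establishing this identification is a direct computation. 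The upshot is that the integral curve of $\lambda(\xi)$ through $k_0$ is defined precisely on the interval on which the factorization of $\exp_D(t\xi)k_0$ persists, so completeness of the dressing vector fields is equivalent to global existence of these factorizations.

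For the backward direction, if $\Phi$ (hence $\Psi$) is a diffeomorphism, then every element of $D$ — in particular each $\exp_D(t\xi)k_0$ — factors uniquely, so $k(t)$ is defined for all $t\in\mathbb{R}$. Thus every dressing vector field is complete and, since $K^*$ is connected and simply connected, the infinitesimal dressing action integrates to a genuine action of $K^*$ on $K$; that is, $(K,\Pi_K)$ is complete. For the forward direction, I would assume completeness and use the resulting global left action $K^*\times K\to K$, together with the accompanying cocycle recording the $K^*$-part $u$ of the factorization, to build a smooth global inverse of $\Phi$. Injectivity of $\Phi$ is automatic from the structural fact $K\cap K^*=\{e\}$ for the double (if $u_1k_1=u_2k_2$ then $u_2^{-1}u_1=k_2k_1^{-1}\in K\cap K^*$), so the real task is surjectivity: I would show that $\operatorname{Im}\Phi$ is open, being the image of a local diffeomorphism, and that completeness forces it to be closed, whence $\operatorname{Im}\Phi=D$ by connectedness of $D$.

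The main obstacle is exactly this last step — deducing surjectivity of $\Phi$ from completeness. The difficulty is that the factorization $d=uk$ is only guaranteed near the identity, and one must propagate it to all of $D$, where a priori the subgroups $K,K^*$ need not be closed. Completeness is precisely the tool that prevents the factorizing curves from escaping to infinity in finite parameter time: flowing a would-be boundary point of $\operatorname{Im}\Phi$ back along a dressing trajectory, completeness guarantees the trajectory exists for all parameter values and hence that the point already lies in the open set $\operatorname{Im}\Phi$, so $\operatorname{Im}\Phi$ is closed. Making this rigorous requires a careful path-lifting argument in $D$, using that $D$ is connected and simply connected so that the locally defined factorization extends consistently; this is where the completeness hypothesis is genuinely used, and it is the part I expect to demand the most care.
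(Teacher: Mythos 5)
The paper does not prove this statement: it is quoted verbatim from Lu's thesis \cite{Lu1} (her Proposition~2.45 and surrounding results), so there is no in-paper argument to compare yours against. Judged on its own terms, your outline correctly identifies the two standard pillars of Lu's proof --- the inversion symmetry exchanging $\Psi$ and $\Phi$, and the identification of the $K$-component of $t\mapsto \exp_D(t\xi)k_0$ under the factorization with an integral curve of the dressing field $\lambda(\xi)$ --- and the backward direction (diffeomorphism $\Rightarrow$ completeness) is essentially complete once that ODE identification is carried out and Palais' integration theorem is invoked for the simply connected $K^*$.

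There are, however, two genuine gaps in the forward direction. First, injectivity is not ``automatic from the structural fact $K\cap K^*=\{e\}$'': in the simply connected double $D$, the connected subgroups integrating $\mathfrak{k}$ and $\mathfrak{k}^*$ may a priori meet in a nontrivial discrete subgroup (and the homomorphisms $K\to D$, $K^*\to D$ need not even be injective), so triviality of the intersection is part of what must be proved. The clean fix is topological: once $\Phi$ is shown to be a surjective local diffeomorphism with the path-lifting property, it is a covering map onto the simply connected $D$ from the connected $K^*\times K$, hence a diffeomorphism, and injectivity comes for free. Second, your closedness argument for $\operatorname{Im}\Phi$ (``flow a boundary point back along a dressing trajectory'') is not a proof as stated --- a boundary point of $\operatorname{Im}\Phi$ carries no given presentation of the form $\exp_D(t\xi)k_0$ to flow along. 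The standard completion of the argument is algebraic rather than point-set: completeness yields the swap relations $K^*K\subseteq KK^*$ and $KK^*\subseteq K^*K$ (by factoring $uk$ for $u=\exp(\xi_1)\cdots\exp(\xi_n)$ one exponential at a time using the global dressing flows), whence $KK^*$ is closed under multiplication and inversion, i.e.\ is a subgroup of $D$; being open (image of a local diffeomorphism) in the connected group $D$, it equals $D$. Replacing your open--closed step with this subgroup argument, and the injectivity claim with the covering-space argument, would make the sketch sound.
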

For this reason, we restrict our attention to complete simply-connected Poisson Lie groups $(K,\Pi_K)$. If $(K,\Pi_K)$ is a complete simply-connected Poisson-Lie group, then the Drinfeld double admits a global diffeomorphism $D\cong K\times K^{*}$, and an invertible Poisson structure $\Pi_D$. If we let the related symplectic structure by $\omega_D$, we know that $\omega_D$ is closed because $\Pi_D^{+}$ is Poisson, and so $\omega_D$ is symplectic on $D$. We can now begin the construction of a para-holomorphic algebroid with a flat para-complex connection in a similar way to Example \ref{Normal}.
\\

\indent Consider the standard Courant algebroid on $\mathbb{T}D$ induced by the Lie bracket on $\mathfrak{d} = \mathfrak{k}\oplus \mathfrak{k}^{*}$. We have a connection $A_{\omega_D}$, given by the graph of $\omega_D$. There is a natural para-complex structure $J_D$ on $TD$ coming from the span of the left invariant vector fields on $K$ and $K^{*}$, making $\{(e_i)^{L},(\varepsilon^i)^L\}$ a global frame for $TD$ . By the non-degeneracy of $\omega_D$, we have a global frame for $\mathbb{T}D$, given by $\{(e_i)^L,(\varepsilon^i)^L, \,  \tilde{\omega}_D((e_i)^L), \, \tilde{\omega}_D ((\varepsilon^i)^L) \}$. If we chose to extend $J_{D}$ to an almost para-complex structure $\tilde{J}_D$ on $\mathbb{T}D$ by identifying the $+1$-eigenbundle as the $C^{\infty}(D)$-span of $\{(e_i)^L, \tilde{\omega}_D((e_i)^L)\}$, and the $-1$-eigenbundle as the $C^{\infty}(D)$-span of $\{(\varepsilon^i)^L, \tilde{\omega}_D((\varepsilon^i)^L)\}$, then it is clear that with respect to this almost para-complex structure, $A_{\omega_{D}}$ defines a para-complex connection. 
\\

\indent To summarize, we have an almost para-complex structure $\tilde{J}_D$ with a para-complex connection $A_{\omega_D}$ on $\mathbb{T}D$. The induced para-Hermitian structure then comes from extending the natural left invariant pairing $\langle \theta^L, \theta^L\rangle$ to $\mathbb{T}D$ by taking $\langle\theta^L\circ \tilde{\Pi}_D,\theta^L\circ \tilde{\Pi_D}\rangle$ on the second factor, which we denote simply by $\langle \cdot ,\cdot \rangle_{L,\Pi_D}$. This metric is clearly para-Hermitian with respect to $\tilde{J}_D$, and the anchor $\rho_{TD}$ given by projection onto the first factor is clearly para-holomorphic. 
\\

\indent This construction uses the standard Courant algebroid structure and so by Example \ref{Normal} both the eigenbundles are integrable. Therefore, $(\mathbb{T}D, \tilde{J}_D, \langle \cdot ,\cdot \rangle_{L,\Pi_D}, \rho_{TD})$ is an exact para-holomorphic algebroid with a para-complex connection $A_{\omega_{D}}$, and this is an example that can be done over any complete semi-simple Poisson-Lie group that is different from the one induced by the Cartan-Dirac structure previously, and uses the standard Courant algebroid structure on $\mathbb{T}D$. 
\pagebreak
\addcontentsline{toc}{section}{References}

\end{document}